\newtheorem{Thm}{Theorem}
\newtheorem{theorem}{Theorem}[section]
\newtheorem{lemma}[theorem]{Lemma}
\newtheorem{corollary}[theorem]{Corollary}
\newtheorem{proposition}[theorem]{Proposition}
\newtheorem{example}[theorem]{Example}
\theoremstyle{definition}
\newtheorem{remark}[theorem]{Remark}
\newtheorem{definition}[theorem]{Definition}
\def\Z{\mathbb Z}
\def\C{\mathbb C}
\def\R{\mathbb R}
\def\0{\underline 0}
\begin{document}

\title {\bf  L\^e cycles and Milnor classes\footnote{Research partially supported by CAPES, CNPq and FAPESP,
Brazil, and by CONACYT and PAPIIT-UNAM, Mexico. \newline $\quad$
{\it Key-words:} Complex hypersurfaces, Milnor classes, L\^e
cycles, Whitney stratifications, constructible sheaves, polar
varieties, Schwartz-MacPherson classes, Fulton-Johnson classes.
\newline   {\it Mathematics Subject Classification.} Primary ;
32S50, 32S15, 14B05  Secondary; 14C17, 14J17, 55S35  }}

\vspace{1cm}
\author{R. Callejas-Bedregal, M. F. Z. Morgado and J. Seade }

\date{\bf }
\maketitle

\begin{abstract}

The purpose of this work is to establish a link between the theory
of Chern classes for singular varieties and the geometry  of the
varieties in question. Namely, we show that if $Z$ is a
hypersurface in a compact  complex manifold, defined by the
complex analytic space of zeroes of a reduced non-zero holomorphic section of a very ample line
bundle,  then its Milnor classes, regarded as elements in the Chow
group of $Z$, determine the  global L\^e cycles of $Z$; and
viceversa:  The L\^e cycles determine the Milnor classes. Morally
this implies, among other things, that the Milnor classes
determine  the topology of the local Milnor fibres  at each point
of $Z$, and  the geometry of the local Milnor fibres determines
the corresponding Milnor classes.

 \end{abstract}

 \section*{Introduction}

In this article,  a hypersurface  means a codimension one
analyticc subspace  of a complex manifold. We  consider compact
hypersurfaces defined by the complex
analytic space of zeroes of a reduced non-zero holomorphic section
of a very ample line bundle, and establish a deep relation between
their Milnor classes and their global L\^e cycles: We exhibit each
Milnor class as an explicit polynomial in the L\^e cycles and
viceversa.

To explain what this means, recall first that Chern classes of
complex ma-nifolds have played for decades a major role in complex
geometry and topology.  There are several extensions of this
important concept for  singular varieties, each having its own
interest and characteristics. Perhaps the most interesting of
these are  the Schwartz-MacPherson  and the Fulton-Johnson
classes. Milnor classes are global invariants of singular
hypersurfaces  in compact complex manifolds, elements in the Chow
group  of the singular variety, which measure the difference
between the Schwartz-MacPherson and the Fulton-Johnson classes.

The  literature about Milnor classes, and more generally about
Chern classes for singular varieties, is  large. And yet,  the
study of  the geometric and topological information that these
classes  encode is a branch of mathematics which still is in its
childhood. Contributing towards that goal is precisely the purpose
of this article.

L\^e cycles are analytic cycles  that describe, among other
things, the topology of the local Milnor fibres: We know from
\cite{Massey0, Massey} that there is a L\^e cycle in each
dimension, from $0$ to that of the singular set, and the
multiplicity of the L\^e cycles at each point says how many
handles of the corresponding dimension we must attach to a ball in
order to construct the local Milnor fibre (up to homeomorphism).

 L\^e cycles were  introduced by D. Massey in \cite{Massey0} for holomorphic map-germs
 $h:({\C}^{n+1},{\bf 0})\rightarrow (\C,0)$ by means of polar varieties.
 An interpretation of these cycles was given in \cite{GG} in terms of the Chern class of the tautological
 line bundle corresponding to the divisor determined by blowing up along the Jacobian ideal of $h,$
 which defines the critical set
 $\Sigma(h):=V(\frac{\partial h}{\partial z_0},\dots,\frac{\partial h}{\partial
 z_n})$ of $h.$
  Using \cite{GG}, the concept of L\^e cycles extends naturally to global singular hypersurfaces in compact complex manifolds.
  These are  elements in the Chow group, whose restriction to a neighborhood of each point are the  classes of the usual L\^e cycles.

Milnor classes were originally defined as elements in the singular
homology group of the hypersurface (or a complete intersection).
It was observed in the work of W. Fulton and  K. Johnson
(\cite{FJ}) and G. Kennedy (\cite{K}) that the Fulton-Johnson
classes and the Schwartz-MacPherson classes, and hence the Milnor
classes, can actually be considered as cycles in the corresponding
Chow group. Milnor classes have support in the singular set of the
variety, and there is a Milnor class in each complex dimension,
from 0 up to the dimension of the singular set.

Similarly, L\^e cycles also have support in the singular set of
the hypersurface and there is a L\^e cycle in each dimension, from
$0$ up to that of this singular set. This makes it natural to ask
what is the  relation among the global L\^e cycles and the Milnor
classes, and  that is the subject we explore in this article.

The concept of Milnor classes appeared first  in P. Aluffi's work
\cite{Aluffi0, Aluffi} under the name of $\mu$-class. Milnor
classes also appear implicitly in A. Parusi\'nski and P.  Pragacz'
article \cite {Par-Pr0}. The actual name of Milnor classes was
coined later
  by various authors at about the same time (see \cite{BLSS1, BLSS2, Yokura, Par-Pr});   the name comes from the fact that when the  singularities of
  $Z$ are all isolated, the Milnor class
  in dimension $0$, which is an integer, is the sum of the local Milnor numbers (by \cite{SS}), while  all other  Milnor classes are  $0$  (by  \cite{Suwa}).

  There are   interesting recent  articles about Milnor classes by various authors, as for instance P. Aluffi, J.-P. Brasselet, J. Sch\"{u}rmann, M. Tib\v ar, S. Yokura, T. Ohmoto, L. Maxim and others.
  There are also  important generalisations of this concept to different settings (see for instance \cite {Aluffi, CMSS, Yokura2}; also \cite{BSY}).
 Even so, Milnor  classes are still somehow mysterious objects, even ``esoteric".

 Milnor classes appear for instance in \cite{Alu-Mar} in relation with Feynman graphs in perturbative quantum field theory.
They appear also  in \cite{Beh} in relation with Donaldson-Thomas
type invariants for Calabi-Yau threefolds, via the classes
introduced by P. Aluffi in \cite {Aluffi1} for possibly nonreduced
spaces.  In the case of a hypersurface in a complex manifold, the
Aluffi classes are used in \cite{Aluffi1} to give an
interpretation of the Milnor classes.

  The aim of this note is two-folded: On the one hand we use Milnor classes to get global information about
   L\^e  cycles; and on the other hand we use L\^e  cycles  to get geometric and
   topological information about Milnor classes. Of course  this provides also a description of  the Aluffi classes in terms of L\^e cycles (Corollary \ref{c: Aluffi})
in the case of hypersurfaces in complex manifolds.

\medskip

  The main result in this article is the following theorem:

\begin{Thm} \label{principal}
Let $M$ be a compact complex manifold and $L$ a  very ample line
bundle on $M$. Consider the complex
analytic space $Z$ of zeroes  of a reduced non-zero holomorphic
section $s$ of $L$ and denote by $Z_{sing}$ the singular set of
$Z$.  Then the Milnor classes ${\cal M}_{k}(Z)$ and the L\^e
cycles $\Lambda_{k}(Z)$  of $Z$, $k= 0,\dots, d$ where
$d=\dim(Z_{sing})$, determine each other by the formulas:
$${\cal M}_{k}(Z)=\sum_{l=0}^{d-k}(-1)^{l+k}
\left( \begin{array}{c}l+k\\
k\\
\end{array}\right)
c_{1}(L|_{Z})^{l} \cap \Lambda_{l+k}(Z) ,$$ and
$${\Lambda}_{k}(Z)=\sum_{l=0}^{d-k}(-1)^{l+k}
\left( \begin{array}{c}l+k\\
k\\
\end{array}\right)
c_{1}(L|_{Z}))^{l} \cap {\cal M}_{l+k}(Z)\,, $$ for any
$k=0,\dots, d,$ where $c_1(L|_{Z})$ is the corresponding  Chern
class. These equalities are in the Chow group of  $Z$ and all the
cycles in them actually have support in
 $Z_{sing}.$
\end{Thm}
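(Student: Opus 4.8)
The plan is to read the two displayed formulas as asserting that a single linear operator is an \emph{involution}, and to split the work accordingly: one formula is genuine geometric input, while the other is then a formal consequence. Write $\alpha := c_1(L|_Z)$ and, for a family of cycles $(\Lambda_0,\dots,\Lambda_d)$ with $\Lambda_j$ of dimension $j$, define
$$T(\Lambda)_k \;=\; \sum_{l=0}^{d-k}(-1)^{l+k}\binom{l+k}{k}\,\alpha^{l}\cap \Lambda_{l+k}, \qquad k=0,\dots,d.$$
Both displayed identities have exactly this shape, so the theorem is equivalent to the two assertions: (i) $\mathcal{M}=T(\Lambda)$ (equivalently, by symmetry of the statement, $\Lambda = T(\mathcal{M})$); and (ii) $T\circ T = \mathrm{id}$. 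Granting (i) for one of the two orderings, (ii) immediately yields the other.

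I would dispose of (ii) first, since it is purely combinatorial. Substituting the definition of $T$ into itself and collecting the power $\alpha^{j}$ acting on $\Lambda_{j+k}$, with $j=l+m$, the accumulated sign $(-1)^{m+k}(-1)^{l+m+k}=(-1)^{j-m}$ gives
$$T(T(\Lambda))_k \;=\; \sum_{j=0}^{d-k}\Big(\,\sum_{m=0}^{j}(-1)^{j-m}\binom{m+k}{k}\binom{j+k}{m+k}\Big)\,\alpha^{j}\cap \Lambda_{j+k}.$$
The inner sum simplifies through the trinomial-revision identity $\binom{m+k}{k}\binom{j+k}{m+k}=\binom{j+k}{k}\binom{j}{m}$, after which it becomes $\binom{j+k}{k}(-1)^{j}\sum_{m=0}^{j}(-1)^{m}\binom{j}{m}=\binom{j+k}{k}(-1)^{j}(1-1)^{j}$, which vanishes for $j\geq 1$ and equals $1$ for $j=0$. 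Hence $T(T(\Lambda))_k=\Lambda_k$, so $T$ is an involution and (ii) holds.

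The substance therefore lies in proving one instance of (i), and the natural route is to realize \emph{both} the Milnor classes and the Lê cycles as twists of one common object: the Segre class of the singular subscheme $Z_{sing}\subset Z$ cut out by the Jacobian ideal of the section $s$. On the Lê-cycle side I would invoke the blow-up description recalled from \cite{GG}, reading the $\Lambda_k(Z)$ off the Chern class of the tautological bundle of the blow-up of $Z$ along this Jacobian ideal, which repackages the components of the Segre class of $Z_{sing}$. On the Milnor-class side I would use that $\mathcal{M}(Z)$ measures the discrepancy between the Fulton–Johnson class $c_{\mathrm{FJ}}(Z)$ and the Schwartz–MacPherson class $c_{\mathrm{SM}}(Z)$, and that for a hypersurface defined by a section of $L$ this discrepancy is governed by the same Segre class via Aluffi's $\mu$-class formula \cite{Aluffi}, with the relevant normal datum being $N_Z=L|_Z$. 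Comparing the two expansions, the powers $\alpha^{l}=c_1(L|_Z)^{l}$ and the binomial weights $\binom{l+k}{k}$ should emerge precisely from re-expressing one Segre expansion in terms of the other after the $L|_Z$-twist.

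The step I expect to be the main obstacle is exactly this matching: one must verify that the local polar/Lê data and the global characteristic-class data encode the \emph{same} Segre coefficients, and keep rigorous track of the normal-bundle twist $N_Z=L|_Z$ and of signs, so that the binomial coefficients and the powers of $c_1(L|_Z)$ come out with the weights demanded by $T$. Once that bookkeeping is secured, (i) follows, and combined with the involution (ii) it proves the theorem; all classes retain support in $Z_{sing}$ because both the Segre class of the Jacobian scheme and capping with $c_1(L|_Z)$ preserve that support.
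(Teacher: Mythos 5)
Your reduction of the theorem to (i) one of the two displayed identities plus (ii) the involutivity of the operator $T$ is sound, and your proof of (ii) via trinomial revision and the vanishing of $\sum_{m}(-1)^{m}\binom{j}{m}$ for $j\geq 1$ is correct. It is, in substance, the same computation the paper carries out in Lemma \ref{secondTheorem1}, where the first formula is substituted into the second and the resulting inner sum is collapsed using the orthogonality relation $\sum_{v}(-1)^{v}\binom{s}{v}\binom{v}{d-k}=(-1)^{d-k}\delta_{s,d-k}$; your packaging as ``$T$ is an involution'' is cleaner but equivalent. So the purely combinatorial half of your argument matches the paper's.

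The geometric half is where the problems lie, and they are of two kinds. First, you only sketch (i): the entire content of the theorem is the matching of the two expansions, and you explicitly defer ``the main obstacle'' --- verifying that the weights $\binom{l+k}{k}$ and the powers of $c_1(L|_Z)$ actually emerge --- so as written the proposal proves nothing beyond the formal inversion. Second, and more seriously, the route you choose for (i) is not the paper's. The paper goes through the Parusi\'nski--Pragacz formula (\ref{P}), the polar-variety expression of the Schwartz--MacPherson classes obtained from Piene's theorem, and the identification (\ref{5}) of the L\^e cycles with the projective Massey/MacPherson cycles. You instead propose to express \emph{both} sides through the Segre class of the Jacobian scheme: the L\^e cycles by definition (Gaffney--Gassler), the Milnor class by Aluffi's $\mu$-class formula. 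If you carry that computation out honestly you will find that the coefficients do \emph{not} come out to $(-1)^{l+k}\binom{l+k}{k}$ --- and this is precisely the point: the Segre-class L\^e cycles and the Massey/MacPherson cycles do not coincide, equation (\ref{5}) is the erroneous step in the paper's own proof, and the theorem as stated is false. The erratum appended at the end of the paper (prompted by Aluffi's observation that the statement leads to contradictions) redefines the global L\^e classes and replaces the binomial coefficients by degree-dependent rational functions $C_{k,s}(d)$, $B_{k,s}(d)$. So your strategy for (i), executed correctly, refutes the stated formula rather than proving it; that is a valuable observation, but it is not a proof of the theorem, and the bookkeeping you flag as the main obstacle is exactly where the statement breaks down.
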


Since at each point in $Z$ the local   L\^e cycles determine the
topology, and hence the homology, of the local Milnor fibre $F$,
this theorem somehow indicates that the Milnor classes are
determined by the vanishing homology of $Z$, {\it i.e}, by the
kernel of the specialization morphism $H_*(Z_t) \to H_*(Z)$, where
$Z_t$ is the complex manifold defined by the  intersection  of
the zero section of the bundle $L$ with a section near the one
that determines $Z$, but which is everywhere transversal to the
zero section of  $L$. So  Theorem \ref {principal} gives a
positive answer to a question raised at the introduction in
\cite{BLSS2} (c.f. \cite {S1, S}).

\medskip
We also get  the corollary below, which extends and strengthens
\cite[Corollary 5.13]{BLSS2} in the hypersurface case:

     \medskip

    \noindent
    {\bf Corollary.}
{\it Assume $M, L $ and $Z$ are as above and equip $M$ with a
Whitney stratification $\{Z_\beta\}$ adapted to $Z$.  Let $d$ be
the dimension of the singular set $Z _{\rm sing}$.
 Then we have the following equalities of cycles in the Chow group of $Z$:
\begin{center}${\cal M}_{d}(Z) \; =\displaystyle\sum_{S_\beta \subset Z _{\rm sing}}
\;\mu^\perp(S_\beta)\; [\overline{S}_\beta] \;
=\displaystyle\sum_{S_\beta\subset Z _{\rm sing}}\;
\lambda^{d}_{S_\beta}\; [\overline{S}_\beta] \,= \, (-1)^d
\Lambda_{d}(Z) \,, $\end{center} where the sums run over the
strata of dimension $d$ which are contained in $Z _{\rm sing}$,
$\mu^\perp(S_\beta)$ is the transversal Milnor number of $S_\beta$
and $\lambda^{d}_{S_\beta}$ is the $d$-th L\^e number of
$S_\beta$. }
\medskip

The assumption  about the bundle $ L $ being very ample is used
to have
 a  projective embedding of $M$ such that $L$ is the pull back of the tautological bundle;
this leads to a description of the Schwartz-MacPherson classes in
terms of polar cycles. The ``very ampleness condition"  is also
needed to use R. Piene's theorem in \cite{Piene}, expressing the
Mather classes in polar terms. Yet,
  Theorem \ref{principal} describes intrinsically
the Milnor classes  in terms of  L\^e cycles and vice versa, and
it does not depend on the projective embedding of  $M$. This
suggests that the very ampleness condition on the line bundle $L$
may not be necessary.

The trail  for getting to Theorem 1 can be roughly described as
follows. The first step is  defining the global L\^e cycles. This
can be done in various ways. Here we do it by using the
interpretation of the local L\^e cycles given by T. Gaffney and R.
Gassler in \cite{GG}: We blow up the Jacobian ideal of the
hypersurface  $Z$ in $M$ and look at the Chern class of the
tautological bundle of the corresponding exceptional
divisor. This gives an explicit definition of
the global L\^e cycles.

Next we observe that the main theorem  of A. Parusinski and P.
Pragacz in \cite{Par-Pr} expresses the total
 Milnor class as a function of the Schwartz-MacPherson classes of the closure of the strata
 of a Whitney stratification (see equation (\ref{P})):

\begin{equation}{\cal M}(Z):=\sum_{S_\alpha \in {\cal
S}}\;\gamma_{S_\alpha}\left(c(L_{|_{Z}})^{-1} \cap
(i_{\bar{S_\alpha},Z})_{*}c^{SM}(\bar{S_\alpha})\right).\end{equation}
We refer to section 4 below for an explanation of the terms
involved in this formula.  On the other hand, J.-P. Brasselet in
\cite{BraC} conjectured that the Milnor classes can be expressed
in terms of polar varieties, which brings us closer to our goal of
comparing Milnor classes with L\^e cycles (which are defined via
polar varieties in \cite{Massey0, Massey}). { Following this path
we notice that  J. Sch\"urmann and  M. Tib\u ar introduced in
\cite{ST}, in the affine context, the MacPherson cycles associated
to any constructible function on a complex algebraic proper subset
$X\subset \mathbb{C}^{N}$. They showed that the corresponding
cycle class represents the (dual) Schwartz-MacPherson class  in
the Borel-Moore homology group, and also in  the  Chow group.}
We prove an analogous result in Section 3 in the projective case
(Theorem \ref{3}). In this construction a key role is played by
certain projective polar varieties.  This suffices for us, since
the assumption of considering a very ample line bundle $L$ over
$M$ implies that $M$ is a projective variety. We prove (see the
text for explanations):

\begin{Thm} \label{thm2} Let $X$ be an n-dimensional projective variety endowed with a Whitney stratification with connected strata $S_{\alpha}$.
 Let $\;i_{\overline{S}_{\alpha},X}:\overline{S}_{\alpha}\rightarrow
X$ be the na-tural inclusion. Consider $\varphi:X \rightarrow \C
P^{N}$ a closed immersion and ${\cal L}={\cal O}_{\C P^{N}}(1)$.
If $\beta:X\rightarrow \Z$ is a constructible function with
respect to this stratification, then the $k^{th}$
Schwartz-MacPherson class of $\beta$, $c_{k}^{SM}(\beta)$, is
given by:
$$c_{k}^{SM}(\beta)=\sum_{\alpha}\eta(S_{\alpha},\beta)\sum_{i=k}^{d_{\alpha}}(-1)^{d_{\alpha}-i}
\left( \begin{array}{c}i+1\\
k+1\\
\end{array} \right)(i_{\overline{S}_{\alpha},X})_{*}\left(c_{1}(\varphi^{*}{\cal L})^{i-k} \cap
[\mathbb{P}_{i}(\overline{S}_{\alpha})]\right),$$ where
$\eta(S_{\alpha},\beta)$ is the normal Morse index and $\mathbb{P}_{i}(\overline{S}_{\alpha})$ is the $i$-th (projective) polar variety of $\overline{S}_{\alpha}$.
\end{Thm}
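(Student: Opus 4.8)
The plan is to reduce the computation of $c_k^{SM}(\beta)$ to the Chern--Mather classes of the closures $\overline{S}_\alpha$ and then to invoke Piene's theorem. Concretely, I would proceed in three steps: (i) expand $\beta$ in the basis of local Euler obstructions, reading off the coefficients as the normal Morse indices $\eta(S_\alpha,\beta)$; (ii) apply MacPherson's natural transformation $c_*$, using that it sends the Euler obstruction of a variety to its Chern--Mather class; and (iii) expand each Chern--Mather class into projective polar varieties by Piene's theorem \cite{Piene}. This is the projective counterpart of the affine MacPherson-cycle description of Sch\"urmann and Tib\u ar \cite{ST}.

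For step (i), recall that the functions $\{\mathrm{Eu}_{\overline{S}_\alpha}\}$, each extended by zero outside $\overline{S}_\alpha$, form a basis of the group of constructible functions adapted to the stratification, the change of basis from $\{\mathbbm 1_{\overline{S}_\alpha}\}$ being upper triangular. The normal Morse index $\eta(S_\alpha,\beta)$, defined through the normal Morse datum of $\beta$ along $S_\alpha$, is characterised by being the coefficient of $\mathrm{Eu}_{\overline{S}_\alpha}$ in this expansion, since the normal Morse datum of $\mathrm{Eu}_{\overline{S}_\gamma}$ along $S_\alpha$ is trivial unless $\gamma=\alpha$. Hence
\[
\beta \;=\; \sum_\alpha \eta(S_\alpha,\beta)\,\mathrm{Eu}_{\overline{S}_\alpha}.
\]
I would isolate this identity as a preliminary lemma, as it is the one genuinely delicate point.

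For steps (ii) and (iii), applying $c_*$ to the displayed expansion and using MacPherson's identity $c_*(\mathrm{Eu}_{\overline{S}_\alpha})=(i_{\overline{S}_\alpha,X})_*\,c^M(\overline{S}_\alpha)$ gives
\[
c^{SM}(\beta) \;=\; \sum_\alpha \eta(S_\alpha,\beta)\,(i_{\overline{S}_\alpha,X})_*\,c^M(\overline{S}_\alpha).
\]
It then remains to expand each Chern--Mather class of the $d_\alpha$-dimensional subvariety $\overline{S}_\alpha\subset\C P^N$. Piene's theorem, in the form giving the Mather classes in terms of the projective polar classes, yields for the dimension-$k$ component
\[
c^M_k(\overline{S}_\alpha) \;=\; \sum_{i=k}^{d_\alpha}(-1)^{d_\alpha-i}\binom{i+1}{k+1}\,c_1(\varphi^{*}\mathcal L)^{\,i-k}\cap[\mathbb P_i(\overline{S}_\alpha)].
\]
Substituting this into the previous display and extracting the degree-$k$ part produces exactly the asserted formula.

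The hard part will be establishing the index identity of step (i) rigorously. This requires the local stratified Morse theory behind the normal Morse index together with the characterisation of the Euler obstruction via its characteristic (conormal) cycle, $CC(\mathrm{Eu}_{\overline{S}_\alpha})=(-1)^{d_\alpha}[T^*_{\overline{S}_\alpha}X]$ in the standard sign convention, which is precisely the input that makes the normal Morse indices the correct coefficients; this is where the projective adaptation of \cite{ST} is carried out. A secondary, but still error-prone, point is to match Piene's binomial coefficients with those in the statement: Piene's formula is often recorded for the inverse relation, expressing the polar classes through the Mather classes, so one must invert an upper-triangular system and keep careful track of the dimension and degree conventions attached to the hyperplane class $c_1(\varphi^{*}\mathcal L)$ on $\C P^N$.
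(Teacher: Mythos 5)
Your proposal follows essentially the same route as the paper: the expansion $\beta=\sum_\alpha\eta(S_\alpha,\beta)\,Eu_{\overline{S}_\alpha}$ (which the paper encodes as the commutativity of the left square of its diagram relating $\check{E}u$, $cn$ and $CC$, i.e.\ via the characteristic-cycle description you mention), followed by MacPherson's transformation to reduce to Chern--Mather classes of the $\overline{S}_\alpha$, and finally Piene's theorem to convert each Mather class into projective polar classes. The only cosmetic difference is that the paper works with the dual classes $\check{c}^{SM}$, $\check{c}^{Ma}$ and $\check{E}u$ and then strips the signs at the end, whereas you phrase everything undualised from the start; the content is identical.
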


{ A key point for proving Theorem  \ref{thm2} is
 Piene's   characterization in \cite[Th\'eor\`eme 3]{Piene}
of the Mather classes via polar varieties.

We remark that  there is another formula for the MacPherson
classes in terms of polar varieties given in \cite{Le-Teissier}.
Yet, the expression  we need for proving Theorem 1 is the one
given by Theorem
 \ref{thm2}, because this allows  comparison with the L\^e cycles.}

The final ingredient we need for proving Theorem 1 is  Massey's
concept of
  L\^e cycles for constructible sheaves via polar varieties  (see Remark \ref{Massey-polar}).  We call  these  Massey cycles. In Section 3 we extend this concept  to the projective setting and we prove a formula
  comparing  the   Massey  cycles   with the MacPherson cycles (Proposition \ref {p: Massey cycle}),
  analogous to Massey's formula in \cite[Theorem 7.5]{MasseyDuke}.

Theorem 1 is then proved by considering the   formula for the
Milnor classes by Parusinski-Pragacz, mentioned above, replacing
in it
 the Schwartz-MacPherson classes of $X$
 by the expression given
 in Theorem  \ref{thm2},    and then using Proposition \ref {p: Massey cycle} to express  the L\^e cycles in terms of  the MacPherson cycles in the projective setting. This is done in Section 5.

 We notice too  that the proof of  Theorem 1  leads to a description of the Milnor classes in terms of polar cycles, thus answering a question raised by J.-P. Brasselet  in \cite {BraC},  which was a motivation for this research  (see Remark
 \ref{Brasselet}).

The authors are grateful to David Massey, Marcelo Saia and J\"org
Sch\"urmann for very helpful conversations and comments. We
especially thank J. Sch\"urmann for pointing out that the
statement of Theorem $2$ given in the first version of this work
with $\beta=1_{X}$ could be improved for arbitrary $\beta$.
 The authors  are also
grateful to the Instituto de Ci\^encias Matem\'aticas e
Computa\c{c}\~ao of the University of S\~ao Paulo (USP)  at S\~ao
Carlos, Brasil, and to the Cuernavaca Unit of the Instituto de
Matem\'aticas of the National University of Mexico (UNAM), for
their support and hospitality while working on this article.

\section{Derived Categories}

We  assume some basic knowledge on  derived categories,
hypercohomology and  sheaves of vanishing cycles as described in
\cite{Dimca}.

If $X$ is a complex analytic space then ${\cal D}^{b}_{c}(X)$
denotes the derived category of bounded, constructible complexes
of sheaves of $\C$-vector spaces on $X$. We denote the objects of
${\cal D}^{b}_{c}(X)$ by something of the form $F^{\bullet}$. The
shifted complex $F^{\bullet}[l]$ is defined by
$(F^{\bullet}[l])^{k}=F^{l+k}$ and  its differential is
$d^{k}_{[l]}=(-1)^{l}d^{k+l}$. The constant sheaf $\C_{X}$ on $X$
induces an object $\C_{X}^{\bullet} \in {\cal D}^{b}_{c}(X)$ by
letting $\C_{X}^{0}=\C_{X}$ and $\C_{X}^{k}=0$ for $k\neq 0$.

If $h:X\rightarrow \C$ is an analytic map and $F^{\bullet}\in
{\cal D}^{b}_{c}(X)$, then we denote the sheaf of vanishing cycles
of $F^{\bullet}$  with respect to $h$ by $\phi_{h}F^{\bullet}$.

For $F^{\bullet}\in {\cal D}^{b}_{c}(X)$ and $p \in X$, we denote
by ${\cal H}^{*}(F^{\bullet})_{p}$ the stalk cohomology of
$F^{\bullet}$ at $p$, and by $\chi(F^{\bullet})_{p}$ its Euler
characteristic. That is,
$$\chi(F^{\bullet})_{p}=\sum_{k}(-1)^{k}{\rm dim}_{\C}{\cal
H}^{k}(F^{\bullet})_{p}.$$
 We also denote by $\chi(X,F^{\bullet})$ the Euler characteristic
of $X$ with coefficients in $F^{\bullet}$, {\it i.e.},
$$\chi(X,F^{\bullet})=\sum_{k}(-1)^{k}{\rm dim}_{\C}\mathbb{H}^{k}(X,F^{\bullet}),$$
where $\mathbb{H}^{*}(X,F^{\bullet})$ denotes the hypercohomology
groups of $X$ with coefficients in $F^{\bullet}$.

When $F^{\bullet}\in {\cal D}^{b}_{c}(X)$ is ${\cal
S}$-constructible, where ${\cal S}$ is a Whitney stratification of
$X$, we  denote it by $F^{\bullet}\in {\cal D}^{b}_{{\cal S}}(X)$.
We would like to point out the following result which appears in
\cite[Theorem 4.1.22]{Dimca}:
\begin{equation}\label{EulerCharact}\chi(X,F^{\bullet})=\sum_{S\in {\cal
S}}\chi(F^{\bullet}_{S})\chi(S),\end{equation} where
$\chi(F^{\bullet}_{S})=\chi(F^{\bullet})_{p}$ for an arbitrary
point $p \in S$.

For a complex analytic subspace $V$ of
$M$, we denote its conormal space by $T^{*}_{V}M$. That is,
$$T^{*}_{V}M:={\rm closure}\;\{ (x, \theta) \in T^{*}M\;|\; x \in
V_{{\rm reg}}\;{\rm and}\; \theta_{|_{T_{x} V_{{\rm reg}}}}\equiv
0\}\,,$$ where $T^{*}M$ is the cotangent bundle of $M$ and
$V_{{\rm reg}}$ is the regular part of $V$.

The following definition is standard in the literature:

\begin{definition}\label{constructible}
Let $ X $ be an analytic subspace of a complex manifold $ M $, $\{
S_{\alpha} \}$ a Whitney stratification of $M$ adapted to $ X $
and $x\in S_\alpha$ a point in $X$. Consider $g:(M,x)\rightarrow
(\C,0)$ a germ of holomorphic function such that $d_{x}g $ is a
{\it non-degenerate covector} at $x$ with respect to the fixed
stratification, that is, $d_{x}g \in T^{*}_{S_\alpha}M$ and
$d_{x}g \not\in T^{*}_{\overline{S^{'}\!\!}}\;M$, for all stratum
$S^{'} \neq S_\alpha$.  And let $N$ be  a germ of a closed complex
submanifold of $M$ which is transversal to $S_\alpha$, with $N
\cap S_\alpha=\{ x\}$. Define the {\it complex link }
$l_{S_\alpha}$ of $S_\alpha$ by:
 \begin{center}$l_{S_\alpha}:= X\cap N \cap
B_{\delta}(x)\cap \{g=w\}\quad{\rm for}\;0<|w|<\!\!< \delta<\!\!<
1.$\end{center} The {\it normal Morse datum} of $S_\alpha$ is
defined by:
$$NMD(S_\alpha):=(X\cap N \cap B_{\delta}(x),l_{S_\alpha}),$$ and the {\it normal Morse index} $\eta(S_\alpha,F^{\bullet})$  of the stratum is:
\begin{center}$\eta(S_\alpha,F^{\bullet}):=\chi(NMD(S_{\alpha}),F^{\bullet}),$\end{center}
where the right-hand-side means the Euler characteristic of the
relative hypercohomology. \end{definition}

By a result of M. Goresky and R. MacPherson in \cite[Theorem
2.3]{GM}  we get that the number $\eta(S_\alpha,F^{\bullet})$ does
not depends on the choices of $x\in S_\alpha,\; g$ and $N$.

Notice that by \cite[Remark 2.4.5(ii)]{Dimca}, it follows that
\begin{equation}\label{RelativeEuler}\eta(S_\alpha,F^{\bullet})=\chi(X\cap N \cap
B_{\delta}(x),F^{\bullet})-\chi(l_{S_\alpha},F^{\bullet})\,.\end{equation}

\begin{lemma}\label{indice} Let $F^{\bullet}\in {\cal D}^{b}_{{\cal
S}}(X)$ with ${\cal S}=\{S_{\alpha}\}$ a Whitney stratification of
$X$ and $g:X\rightarrow \C$ be a holomorphic function such that
$d_{p}g$  is a non-degenerate covector at $p \in S_\alpha$ with
respect to the fixed stratification. Set $d=\dim X$,
$d_{\alpha}=\dim S_{\alpha}$ and
$m_{\alpha}:=(-1)^{d-d_{\alpha}-1}\chi(\phi_{g_{|_{N}}}F_{|_{N}}^{\bullet})_{p}$,
where $\phi_{g_{|_{N}}}F_{|_{N}}^{\bullet}$ is the sheaf of
vanishing cycles of $F_{|_{N}}^{\bullet}$ with respect to
${g_{|_{N}}}$, $p \in S_{\alpha}$ and $N$ is a germ of a closed
complex submanifold which is transversal to $S_{\alpha}$ with $N
\cap S_{\alpha}=\{p\}$. Then
$$m_{\alpha}=(-1)^{d-d_{\alpha}}\eta(S_{\alpha},F^{\bullet}).$$
\end{lemma}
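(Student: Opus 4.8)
The plan is to reduce the statement to a single sign-free identity and then do the bookkeeping. Unwinding the definitions, the claimed equality $m_\alpha=(-1)^{d-d_\alpha}\eta(S_\alpha,F^\bullet)$ together with $m_\alpha=(-1)^{d-d_\alpha-1}\chi(\phi_{g|_N}F|_N^\bullet)_p$ is equivalent to
$$\chi(\phi_{g|_N}F|_N^\bullet)_p=-\,\eta(S_\alpha,F^\bullet),$$
so this is the only thing I would actually prove; multiplying by $(-1)^{d-d_\alpha-1}$ then gives the lemma immediately.

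First I would pass to the transversal slice $Y:=X\cap N$ and work with the restricted complex $F|_N^\bullet\in{\cal D}^b_c(Y)$ and the function $g|_N\colon Y\to\C$. The engine is the fundamental distinguished triangle relating the restriction, nearby, and vanishing cycle functors in the (unshifted) conventions of \cite{Dimca},
$$i^{*}F|_N^\bullet \longrightarrow \psi_{g|_N}F|_N^\bullet \longrightarrow \phi_{g|_N}F|_N^\bullet \xrightarrow{\;+1\;},$$
where $i$ is the inclusion of $\{g|_N=0\}$ and $\psi$ denotes nearby cycles. Taking stalk cohomology at $p$ produces a long exact sequence, hence the additivity relation on stalk Euler characteristics
$$\chi(\phi_{g|_N}F|_N^\bullet)_p=\chi(\psi_{g|_N}F|_N^\bullet)_p-\chi(i^{*}F|_N^\bullet)_p.$$

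Next I would identify the two right-hand terms geometrically. The stalk of the nearby cycles computes the hypercohomology of the local Milnor fibre of $g|_N$ at $p$, and under the non-degeneracy of $d_pg$ and the transversality of $N$ this Milnor fibre is exactly $X\cap N\cap B_\delta(p)\cap\{g=w\}$ for $0<|w|\ll\delta\ll1$, which is precisely the complex link $l_{S_\alpha}$ of Definition \ref{constructible}; thus $\chi(\psi_{g|_N}F|_N^\bullet)_p=\chi(l_{S_\alpha},F^\bullet)$. The stalk of $i^{*}F|_N^\bullet$ at $p$ is just the stalk of $F|_N^\bullet$, which by the local conic structure of analytic sets equals the hypercohomology of $X\cap N\cap B_\delta(p)$; thus $\chi(i^{*}F|_N^\bullet)_p=\chi(X\cap N\cap B_\delta(p),F^\bullet)$. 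Substituting these into the additivity relation and comparing with formula (\ref{RelativeEuler}) for $\eta(S_\alpha,F^\bullet)$ yields $\chi(\phi_{g|_N}F|_N^\bullet)_p=-\,\eta(S_\alpha,F^\bullet)$, as required. Since the normal Morse index is independent of the choices of $p$, $g$, and $N$ by Goresky--MacPherson, the resulting equality is well posed.

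The step requiring the most care is the sign and shift convention for the vanishing cycle functor: the computation above delivers $\chi(\phi)_p=-\eta$ exactly for the unshifted $\phi_{g|_N}$ used throughout this paper, whereas the perverse-normalized functor $\phi_{g|_N}[-1]$ would reverse the sign and spoil the stated formula. I would therefore be explicit that $\phi$ is the unshifted functor of \cite{Dimca}, which is the convention consistent with the derivation of (\ref{RelativeEuler}) from \cite[Remark 2.4.5(ii)]{Dimca}. The remaining verifications, namely that the Milnor fibre of $g|_N$ coincides with $l_{S_\alpha}$ and that the stalk equals the hypercohomology of a small slice-ball, are routine consequences of the non-degeneracy hypothesis and local conic structure, so no genuine obstacle arises beyond the sign bookkeeping.
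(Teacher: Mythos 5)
Your proof is correct and takes essentially the same route as the paper: both reduce the lemma to the identity $\chi(\phi_{g_{|_{N}}}F^{\bullet}_{|_{N}})_{p}=-\eta(S_{\alpha},F^{\bullet})$ and obtain it by identifying the vanishing-cycle stalk Euler characteristic with minus the relative Euler characteristic of the pair $(B_{\epsilon}(p)\cap X\cap N,\,B_{\epsilon}(p)\cap X\cap N\cap g^{-1}(\varsigma))$, then comparing with formula (\ref{RelativeEuler}). The only cosmetic difference is that the paper quotes the isomorphism ${\cal H}^{i}(\phi_{g}F^{\bullet})_{p}\simeq \mathbb{H}^{i+1}(B_{\epsilon}(p)\cap X,\,B_{\epsilon}(p)\cap X\cap g^{-1}(\varsigma),F^{\bullet})$ directly from \cite[Equation (4.1), p.~106]{Dimca}, whereas you re-derive its Euler-characteristic consequence from the triangle $i^{*}\rightarrow\psi_{g}\rightarrow\phi_{g}$ together with the conic-structure and Milnor-fibre identifications.
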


\begin{proof}
By \cite[Equation (4.1), p. 106]{Dimca} we have that
$${\cal H}^{i}(\phi_{g}F^{\bullet})_{p}\simeq \mathbb{H}^{i+1}(B_{\epsilon}(p) \cap
X, B_{\epsilon}(p) \cap X \cap g^{-1}(\varsigma),F^{\bullet}),
\,{\rm for}\;0<|\varsigma|<\!\!< \epsilon<\!\!< 1.$$ Hence
$$\chi(\phi_{g_{|_{N}}}F^{\bullet}_{|_{N}})_{p}=-\chi(B_{\epsilon}(p) \cap X \cap N, B_{\epsilon}(p) \cap X \cap N \cap
g^{-1}(\varsigma),F^{\bullet}),$$ and therefore
$m_{\alpha}=(-1)^{d-d_{\alpha}}\eta(S_{\alpha},F^{\bullet})$.
\end{proof}

\begin{remark}\label{ConstructibleFunction} Everything we have defined so far
for a constructible complex of sheaves is defined by J.
Sch\"{u}rmann and  M. Tib\u ar in \cite{ST} for constructible
functions, and the two
 constructions are somehow equivalent. In fact, given
$F^{\bullet}\in {\cal D}^{b}_{c}(X)$, we have naturally associated
the  constructible function on $X$ given by
$$\beta(p)
=\chi(F^{\bullet})_{p}.$$ Moreover, by Sch\"{u}rmann \cite{S}, the
converse also holds, {\it i.e.}, given any \linebreak
constructible function $\beta$ on $X$ there is $F^{\bullet}\in
{\cal D}^{b}_{c}(X)$ such that
$$\beta(p) =\chi(F^{\bullet})_{p}.$$
\end{remark}

The next result follows by Lemma \ref{indice} and \cite[Remark
10.5]{Massey}.

\begin{corollary}\label{local1} Let $h:X\rightarrow \C$ be an analytic map. If $P^{\bullet}=(\phi_{h}\mathbb{C}^{\bullet}_{X})[n-d]$ then
$m_{\alpha}=(-1)^{d-d_{\alpha}}\eta(S_{\alpha},P^{\bullet})=(-1)^{d-d_{\alpha}}\eta(S_{\alpha},w)$,
where $w$ is the constructible function defined by
$w(p)=\chi(P^{\bullet})_{p}$ $=\chi(F_{h,p})-1$ with $F_{h,p}$
being the Milnor fiber of $h$ at $p$.
\end{corollary}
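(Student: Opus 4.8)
The plan is to prove Corollary \ref{local1} by specializing Lemma \ref{indice} to the particular constructible complex $P^{\bullet}=(\phi_{h}\mathbb{C}^{\bullet}_{X})[n-d]$ and then identifying the associated constructible function $w$ with the reduced Euler characteristic of the Milnor fibre. First I would check that $P^{\bullet}$ is an object of $\mathcal{D}^{b}_{\mathcal{S}}(X)$ for a suitable Whitney stratification $\mathcal{S}$: the sheaf of vanishing cycles $\phi_{h}\mathbb{C}^{\bullet}_{X}$ is constructible with respect to a stratification adapted to $h$ by standard theory (as in \cite{Dimca}), and shifting by $[n-d]$ preserves constructibility. With this in hand, Lemma \ref{indice} applies verbatim to $F^{\bullet}=P^{\bullet}$ and yields immediately the first equality $m_{\alpha}=(-1)^{d-d_{\alpha}}\eta(S_{\alpha},P^{\bullet})$, where $m_{\alpha}=(-1)^{d-d_{\alpha}-1}\chi(\phi_{g_{|_{N}}}(P^{\bullet})_{|_{N}})_{p}$.

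Next I would establish the second equality $\eta(S_{\alpha},P^{\bullet})=\eta(S_{\alpha},w)$. By Remark \ref{ConstructibleFunction}, to any $F^{\bullet}\in\mathcal{D}^{b}_{c}(X)$ one associates the constructible function $p\mapsto\chi(F^{\bullet})_{p}$, and the normal Morse index $\eta(S_{\alpha},\cdot)$, being defined through the Euler characteristic of the normal Morse datum, depends only on this associated function; this is exactly the equivalence of the two constructions noted there. Applying this to $F^{\bullet}=P^{\bullet}$ gives $w(p)=\chi(P^{\bullet})_{p}$ and the desired identification of the two normal Morse indices. Concretely, $\eta(S_{\alpha},w)$ is computed by the same alternating-sum formula \eqref{RelativeEuler} in which the hypercohomology Euler characteristics are replaced by the stratified sums of $w$ against Euler characteristics of strata, via equation \eqref{EulerCharact}, so the two quantities coincide.

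Finally I would compute $w(p)$ explicitly. Since $\mathbb{C}^{\bullet}_{X}$ is the constant sheaf, the stalk cohomology $\mathcal{H}^{*}(\phi_{h}\mathbb{C}^{\bullet}_{X})_{p}$ computes the \emph{reduced} cohomology of the Milnor fibre $F_{h,p}$ of $h$ at $p$, shifted appropriately; this is the standard interpretation of vanishing cycles recorded in \cite[Remark 10.5]{Massey}. Taking Euler characteristics and accounting for the shift $[n-d]$ gives $\chi(P^{\bullet})_{p}=\chi(F_{h,p})-1$, which is precisely the reduced Euler characteristic. Here the main subtlety, and the step I would treat most carefully, is bookkeeping the sign and degree shift: one must verify that the shift by $[n-d]$ together with the relation $\mathcal{H}^{i}(\phi_{h}\mathbb{C}^{\bullet}_{X})_{p}\simeq\widetilde{H}^{i}(F_{h,p})$ produces exactly the asserted value $\chi(F_{h,p})-1$ rather than $\pm(\chi(F_{h,p})-1)$ or an off-by-one in the reduced versus unreduced count. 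Once these conventions are pinned down and matched against those in \cite{Massey}, the three displayed equalities in the statement follow, completing the proof.
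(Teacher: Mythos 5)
Your proposal follows the paper's own (one-line) argument exactly: the paper obtains this corollary by applying Lemma \ref{indice} to $F^{\bullet}=P^{\bullet}$ and invoking Massey's identification of the stalks of the vanishing-cycle complex with the reduced cohomology of the local Milnor fibre, which is precisely the chain of steps you describe (including routing the equality $\eta(S_{\alpha},P^{\bullet})=\eta(S_{\alpha},w)$ through Remark \ref{ConstructibleFunction}). The sign and shift bookkeeping you flag at the end is left equally implicit in the paper, so nothing is missing relative to the paper's own proof.
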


\section{L\^e cycles}
\label{Le cycles}

\hspace{0.5cm} Let us recall first the definition of  L\^e cycles
and L\^e numbers of germs of complex analytic functions introduced
by D. Massey in \cite{Massey0} (see also \cite{Massey}).

We assume that the reader is familiar with the notion of gap
sheaves (see \cite{Si-Tr} and \cite[Definition 1.1]{Massey}). For
a coherent  sheaf of ideals $\alpha$ and an analytic subset $W$ in
an affine space $U$, we denote by $\alpha/W$ the corresponding gap
sheaf, which is a coherent sheaf of ideals in ${\cal O}_U,$ and by
$V(\alpha)/W$ the analytic space defined by the vanishing of
$\alpha/W.$ It is important to note that the analytic space
$V(\alpha)/W$ does not depend on the structure of $W$ as a scheme,
but only as an analytic set (see \cite[p. 10]{Massey}).

Let $U$ be an open subset of $\C^{n+1}$ containing the origin,
$h:(U,0)\rightarrow(\C,0)$ the germ of an analytic function,
$z=(z_{0},\cdots,z_{n})$ a linear choice of coordinates in
$\C^{n+1}$ and $\Sigma (h)=V\left(\frac{\partial h}{\partial
z_{0}},...,\frac{\partial h}{\partial z_{n}}\right)$ the critical
set of $h.$ To define the L\^e cycles we first need to define the
relative polar cycles, which are associated to the relative polar
varieties:   For each $k$ with $0 < k < n$, the polar variety
$\Gamma^{k}_{h,z}$ is
 the analytic space
$V\left(\frac{\partial h}{\partial z_{k}},...,\frac{\partial
h}{\partial z_{n}}\right)/ \; \Sigma (h),$ hence the analytic
structure of $\Gamma^{k}_{h,z}$ does not depend on the structure
of $\Sigma (h)$ as a scheme, but only as an analytic set. At
 the level of ideals, $\Gamma^{k}_{h,z}$ consists of those components of  $V\left(\frac{\partial h}{\partial
z_{k}},...,\frac{\partial h}{\partial z_{n}}\right)$ which are not
contained in the set $\Sigma (h)$. Massey
 denotes by $[\Gamma^{k}_{h,z}]$ the cycle associated with the
space $\Gamma^{k}_{h,z}$ (see \cite[p. 9]{Massey}).

 Then, for each $0 < k < n$ ,  Massey defines
the $k$-th L\^e cycle $\Lambda^{k}_{h,z}$ of $h$ with respect to
the coordinate system $z$ as the cycle:
\begin{center}$\Lambda^{k}_{h,z}:= \left[\;\Gamma_{h,z}^{k+1} \cap V\left(\frac{\partial
h}{\partial
z_{k}}\right)\;\right]-[\;\Gamma^{k}_{h,z}\;].$\end{center}

If a point $p = (p_{0},\cdots,p_{n}) \in U$ is an  isolated point
of the intersection of $\Lambda^{k}_{h,z}$ with the cycle of $V
(z_{0}-p_{0},\cdots,z_{k-1}-p_{k-1})$, then  the L\^e number
$\lambda^{k}_{h,z}(p)$ is the intersection number at $p$:
$$ \lambda^{k}_{h,z}(p):= (
\Lambda^{k}_{h,z} \cdot V(z_{0}-p_{0},..., z_{k-1}-p_{k-1})
\;)_{p}\;.$$

It is proved in \cite[Theorem 7.5]{MasseyDuke} (see also
\cite[Theorem 10.18]{Massey}) that for a generic choice of
coordinates, all  the L\^e numbers of $h$ at $p$ are defined and
they are independent of the choice of coordinates. Hence these are
called the ({\it generic}) {\it L\^e numbers} of $h$ at $p$ and
they are denoted simply by $\lambda_{h}^{k}(p)$.

\vspace{0,2cm}  Massey gives an alternative characterization of
the L\^e cycles  of a hypersurface singularity, which leads to a
generalization of the L\^e numbers that can be applied to any
constructible sheaf complexes. From this more general viewpoint,
the case of the L\^e numbers of a function $h$ is just the case
where the underlying constructible sheaf complexes is the sheaf of
vanishing cycles along $h$. Let us explain this.

Let $X$ be an analytic germ of a $d$-dimensional space which is
embedded in some affine space, $M := \mathbb{C}^{n+1}$, so that
the origin is a point of $X$. Consider  a bounded, constructible
sheaf  $F^{\bullet}$ on $X$ or $M$.

For a generic linear choice of coordinates $z = (z_{0},...,z_{n})$
for $\mathbb{C}^{n+1}$,  Massey in \cite[Proposition
0.1]{MasseyDuke} proves that there exist analytic cycles
$\Lambda_{i}(F^{\bullet})$ in X which are purely $i$-dimensional,
such that $\Lambda_{i}(F^{\bullet})$ and $V(z_{0}-x_{0},...,
z_{i-1}-x_{i-1})$ intersect properly at each point $x=(x_0,\cdots,x_n) \in X$ near
the origin, and such that for all x near the origin in $X$ we
have:
\begin{equation}\chi(F^{\bullet})_{x}=\sum_{i=0}^{d}(-1)^{d-i} \left( \Lambda_{i}(F^{\bullet}) \cdot V(z_{0}-x_{0},...,
z_{i-1}-x_{i-1})\right)_{x}  \;. \end{equation} Moreover, whenever
such analytic cycles $\Lambda_{i}(F^{\bullet})$ exist, they are
unique. He also  sets $\lambda_{F^{\bullet}}^{i}(x) = \left(
\Lambda_{i}(F^{\bullet}) \cdot V(z_{0}-x_{0},...,
z_{i-1}-x_{i-1})\right)_{x}$ and calls it the $i$-th
characteristic polar multiplicity of $F^{\bullet}$.

When $\beta(p)=\chi(F^{\bullet})_{p}$ we also denote
$\Lambda_{i}(F^{\bullet})$ by $\Lambda^{i}(\beta)$.

\begin{remark} \label{indice3} By \cite[Remark 10.5]{Massey0} it follows that
if we have $h:(U,0)\rightarrow (\mathbb{C},0)$ with $U$ an open
neighborhood of the origin in $\mathbb{C}^{n+1}$, $X=\Sigma(h)$
the critical set of $h$, $d=\dim_{0}X$ and we let
$$P^{\bullet}=(\phi_{h}\mathbb{C}^{\bullet}_{U})_{|_{\Sigma(h)}}[n-d] \,,$$
then for generic z, for all i and for all $x\in \Sigma(h)$ near
the origin, we have $\Lambda_{i}(P^{\bullet})=\Lambda_{h}^{i}$ and
$\lambda_{P^{\bullet}}^{i}(x)=\lambda_{h}^{i}(x)$.\end{remark}

\vspace{0,2cm}The work of T. Gaffney and R. Gassler \cite{GG}
describes the generic L\^e cycles in a coordinate-free way (see
also \cite[Corollary 7.9]{MasseyDuke}). For this, consider the
Jacobian blow up $\widetilde U$ of $h$ in $U$  with exceptional
divisor $D$.  One has a diagram:

\begin{center}$\left.\begin{array}{ccl}
D&\hookrightarrow&\widetilde U\\ \downarrow
~& ~&\downarrow b\\
\Sigma(h)&\hookrightarrow&U\\
\end{array}\right.$
\end{center}
Let $\mathcal L$ be the tautological line bundle of $\widetilde U$
and $c_{1}({\cal L}) \in H^2(\widetilde U) $  its Chern class.
Gaffney and  Gassler prove that one has:

$$\Lambda_{h}^{k}=b_{*}\left(c_{1}({\cal
L})^{n-k} \cap [D]\right).$$

This equality is as classes in the appropriate Chow group. For
simplicity, we denote by the same symbols the cycles and their
associated classes in the Chow group.

 Of course these
definitions and results extend naturally to map-germs defined on
open sets in complex manifolds. Moreover, the aforementioned
interpretation of the generic L\^e cycles, due to Gaffney and
Gassler, leads naturally to the following notions of L\^e cycles
associated to global hypersurfaces in complex manifolds.

Let $M$ be an $n+1$-dimensional compact complex analytic manifold,
and let $(L,M,\pi)$ be a holomorphic line bundle over $M$. Let $s$
be a holomorphic section of $L,$ generically transverse to the
zero-section of $L,$ and set $Z:=s^{-1}(0).$ That is, $Z$ is the
divisor of zeros of this section, which is a codimension one
subspace of $M$. Notice that $Z$ is locally defined by the set of
zeros of a holomorphic function, hence we call $Z$ a global
hypersurface of $M.$ This does not mean that $Z$ is defined by a
global equation. Let $Z_{\rm sing}$ and $\Sigma(s)$ be the
singular and critical space of $s$ respectively.  These are the
complex analytic subspaces of $M$
defined by the ideal $\left(h,\frac{\partial h}{\partial
z_{0}},...,\frac{\partial h}{\partial z_{n}}\right )$ and the
Jacobian ideal $J:=\left(\frac{\partial h}{\partial
z_{0}},...,\frac{\partial h}{\partial z_{n}}\right )$ of $h$
respectively, where $z_0,\dots,z_n$ are local parameters for $M$,
and $h$ is the local equation of the section of $L$ defining $Z$.
These structures are clearly independent of the choice of the
local parameters. Notice that $Z_{\rm sing}$ and $\Sigma(s)$ have
the same reduced structure \cite[p.3]{Aluffi0} and that $Z_{\rm
sing}$ is the set of points where the section $s$ fails to be
transversal to the zero-section of $L$. Consider the Jacobian blow
up $Bl_{J}M=B$ of $M$ along $\Sigma(s),$ let $D$ be the
exceptional divisor of $B$ and ${\cal L}$ the associated line
bundle on $B$, that we call the {\it tautological line bundle}  of
$B$. One has a diagram:

\begin{center}$\left.\begin{array}{ccl}
~&~&{\cal L}\\
~&~&\downarrow \\
D&\hookrightarrow&Bl_{J}M=B\\ \downarrow
~&~&\downarrow b\\
\Sigma(s)&\hookrightarrow&M\\
\end{array}\right.$
\end{center}

\begin{definition} For $0\leq k \leq n$, we define the $k$-th L\^e cycle of the section $s$ as the following class in the Chow group of $M$:
$$ \Lambda_{k}(Z)=b_{*}\left(c_{1}({\cal L})^{n-k} \cap [D]\right).$$\end{definition}

\vspace{0.2cm} Notice that $\Lambda_{k}(Z)$ is the $k$-dimensional
Segre class $s_k(\Sigma(s),M)$ of $\Sigma(s)$ in $M$, in the sense
of \cite[Section 4]{Ful}. It is observed in \cite[Lemma
1.1]{Aluffi0} that the total Segre class $s(\Sigma(s),M)$
coincides with the total Segre class $s(Z_{\rm sing},M).$  Hence
the definition of the L\^e cycles can be given in an analogous
manner by blowing up $M$ along $Z_{\rm sing}$ instead of blowing
up $M$ along $\Sigma(s).$ Thus $\Lambda_{k}(Z)$ is supported in
$Z_{\rm sing}$, so we will also look at $\Lambda_{k}(Z)$ as a
class in the Chow group of $Z_{\rm sing}$. In particular we have
that $\Lambda_{k}(Z)$ is zero for $k> {\rm dim}\;Z_{\rm sing}$.
Moreover, because $M$ is compact, these also represent classes in
the homology ring $H_{*}(M;\mathbb{Z})$.

The above discussion shows that restricted to a coordinate chart
we have:
\begin{equation}\label{globalLeCycle}{\Lambda_{k}(Z)}_{\mid_{U_{i}}}=\Lambda^{k}_{s_{i}},\end{equation}
as classes in the Chow group of $U_{i}$, where the right-hand side
is the class of the generic L\^e cycle defined by D. Massey and
where $\{(U_{i},s_{i})\}$ is a local description of $s$.

\section{Schwartz-MacPherson classes}

In this section we describe the Schwartz-MacPherson classes of a
complex analytic subspace of a
projective space via projective polar varieties.
We recall first the classical construction defined   in
\cite{MacP}. Notice that this applies to
complex analytic subspaces in general, not only for hypersurfaces.

For any complex analytic subspace $X$
of a complex manifold $M$, consider  the Nash blow up $\tilde{X}
\stackrel{\nu}{\rightarrow}
 X$ of  $X$, its Nash bundle ${\tilde T} \stackrel{\pi}{\rightarrow}
\tilde{ X} $, and the Chern classes of $\tilde{T}$,
$c^{j}(\tilde{T}) \in H^{2j}(\tilde{ X})$, $j=1,\cdots,n$. The
Poincar\'e morphism $\beta_{\tilde{ X}}: H^{2p}(\tilde{
X})\stackrel{\cap[\tilde{ X}]}{\rightarrow} H_{2(n+1)-2p}(\tilde{
X})$, carries these into homology classes, which in turn can be
pushed forward into the homology of $ X$ via the homomorphism
$\nu_{*}$ induced by the projection. These are, by definition
\cite{MacP}, the Mather classes of $X$:
$$c^{Ma}_{k}( X):=v_{*}(c^{n-k}(\tilde{T})\cap [\tilde{ X}])\in H_{2k}( X),\;\;k=0,\cdots,n \,.$$
In fact one has  a homomorphism $c^{Ma}_{*}: Z(M)\rightarrow
H_{*}(M)$ defined in the obvious way, from the free abelian group
$Z(M)$ generated by the irreducible reduced complex analytic subspaces of $M$ to the
homology group (or Chow group, or Borel-Moore homology group). We
also define the dual Mather class $\check{c}^{Ma}_{*}(X)$ of $X$
by $\check{c}^{Ma}_{k}(X):=(-1)^{\dim X-k}c^{Ma}_{k}( X).$

The MacPherson classes are obtained from these by considering
appropriate ``weights" for each stratum, determined by the local
Euler obstruction $Eu_{{ X}}(x)$.  This is an integer associated
to each point $x \in X$. We refer to the literature for the
definition of the invariant $Eu_{{ X}}(x)$ (see for instance
\cite{MacP, BS} or \cite[Ch. 8]{BSS}).

 This invariant is constant in each Whitney stratum (see \cite{MacP, BS, Le-Teissier}), so  it defines a constructible function on $X$.
MacPherson shows in \cite{MacP} that there exists a unique set of
integers $b_{\alpha}$ for which the equation $\sum b_{\alpha}
Eu_{\overline{X}_{\alpha}}(x)=1$ is satisfied for all points $x
\in X$, where $\{ X_{\alpha}\}$ is a Whitney stratification of
$X$, the sum runs over all strata $ X_{\alpha}$ containing $x$ in
their closure and $Eu_{\overline{X}_{\alpha}}$ is the local Euler
obstruction of $\overline{ X}_{\alpha}$. Extending this by
linearity one gets a homomorphism $Eu: Z(M)\rightarrow F(M)$ from
the free abelian group $Z(M)$  to the group of constructible
functions on $X$.

We also define the dual Euler class $\check{E}u$ as the
transformation which associates to an irreducible analytic  subset
$X$ of $M$ the constructible function \linebreak $\check{E}u_{X}
:= (-1)^{\dim(X)} Eu_{X}$, determined by the local Euler
obstruction (see \cite{MacP} or \cite{GS}); this is  extended
linearly to cycles. Then $\check{E}u$ is an isomorphism of groups,
since
  $Eu_{X|_{Xreg}}$ is constant of value 1.

The MacPherson class of degree $k $ is defined by
$$c^{M}_{k}( X):=\sum
b_{\alpha}\;i_{*}(c^{Ma}_{k}(\overline{ X}_{\alpha})),$$ where $\{
X_{\alpha}\}$ is a Whitney stratification of $X$, $\overline{
X}_{\alpha}$ denotes the closure of the stratum, which is itself
analytic (and therefore it has its own Mather classes), and $i:
\overline{ X}_{\alpha}\hookrightarrow  X$ is the inclusion map.
Notice that these are homology classes by definition: $c^{M}_{k}(
X) \in H_{2k}(X; \mathbb X)$, $k= 0, 1, \cdots, n-1$. For
non-singular varieties, these  are the Poincar\'e duals of the
usual Chern classes.

 We notice too that by \cite{BS}, the MacPherson classes coincide, up to Alexander duality, with the
classes defined  by M.-H. Schwartz in \cite{Sch}. Thus, following
the modern literature (see for instance \cite{Par-Pr, BLSS2,
BSS}), we call these the  Schwartz-MacPherson classes of $X$ and
will be denoted from now on by $c^{SM}_{k}(X)$.

Let $L( M)$ be the free abelian group of all cycles generated by
the conormal spaces $T^{*}_{X} M$, where $X$ varies over all
complex analytic subspaces of $ M.$
Define the map  $cn:Z( M)\rightarrow L( M)$ by $cn(X):=T^{*}_{X}
M$. Clearly, this is an isomorphism. We also define the map $S:L(
M)\rightarrow H_*( M)$ by $S(X)=c^{*}(T^{*} M_{|_{X}})\cap
s_{*}(T^{*}_{X} M),$ where the Segre class $s_{*}$ is defined by
$$s_{*}(T^{*}_{X}M):=\widehat{\pi}^{'}_{*}\left(c^{*}({\cal
O}(-1))^{-1}\cap [P(T^{*}_{X}M)]\right)=\sum_{i\geq
0}{\pi}^{'}_{*}\left(c^{1}({\cal O}(1))^{i}\cap
[P(T^{*}_{X}M)]\right),$$ where ${\cal O}(1)$ denotes the
tautological line subbundle on the projectivisation \linebreak
$\widehat{\pi}^{'}:P(T^{*}M_{|_{X}})\rightarrow X$ with ${\cal
O}(-1)$ as its dual.

  By \cite[Lemme (1.2.1)]{Sabbah} (see also \cite[Lemma 1]{K}), one has a
description of the dual Mather class of $X$ in terms of the Segre
class of the conormal space $T^{*}_{X} M,$ as follows:
\begin{equation}\label{mather}\check{c}_{*}^{Ma}(X)=c^{*}(T^{*} M_{|_{X}})\cap
s_{*}(T^{*}_{X} M).\end{equation}

Finally we define a function $CC: F( M)\rightarrow L( M)$  by
$$CC(\alpha):=\sum_{S \in {\cal S}} (-1)^{\dim S}\eta(S,\alpha)\cdot  T^{*}_{\overline{S}} M,$$
where $\alpha $ is a constructible function on $ M$ with respect
to the Whitney stra-tification $\cal S$ of $ M$ and
$\eta(S,\alpha)$ is the normal Morse index $\eta(S,F^{\bullet}),$
(see Definition \ref{constructible}) where $F^{\bullet}$ is the
constructible complex of sheaves such that $\alpha(p)
=\chi(F^{\bullet})_{p}$ (see Remark \ref{ConstructibleFunction}).
 One can prove that this map $CC$ is an isomorphism (see for example the
 discussion on \cite{ST} and the reference therein).

From the above discussions we have the following commutative
diagram:

\begin{equation}\label{diagrama}
\xymatrix { F(M)\ar[d]^{\mbox{id}} &Z(M) \ar[l]_{\check{E}u}\ar[r]^{\check{c}_{*}^{Ma}} \ar[d]^{cn} & H_{*}(M)\ar[d]^{\mbox{id}}\\
F(M) \ar[r]^{CC} &  L(M)  \ar[r]^{S}  & H_{*}(M) }\end{equation}

The commutativity of the left square of this diagram amounts to
saying:
$$\beta=\sum_{\alpha}(-1)^{d_{\alpha}}\eta(S_{\alpha},\beta) \cdot
\check{E}u_{S_{\alpha}},$$ for any function $\beta:X\rightarrow
\Z$ which is constructible for the given Whitney stratification.
Using the commutativity of the other part of the diagram, we have
that
\begin{equation}\label{SMclass}\check{c}_{k}^{SM}(\beta)=\sum_{\alpha}(-1)^{
d_{\alpha}}\eta(S_{\alpha},\beta)(i_{\overline{S}_{\alpha},X})_{*}\check{c}_{k}^{Ma}(\overline{S}_{\alpha}).\end{equation}

\vspace{0.2cm} In the affine context,  Sch\"{u}rmann and  Tib\u ar
in \cite{ST} describe the Schwartz-MacPherson classes of a complex
algebraic proper subset $X\subset \mathbb{C}^{N}$ using algebraic
cycles, which were called MacPherson cycles. In this construction
a key role is played by the affine polar varieties, which we now
describe.

If $X$ is of pure dimension $n<N$, the {\it $k$-th global polar
variety} of $X,$ \linebreak $0\leq k\leq n,$ is the following
algebraic set (see \cite{Le-Teissier})
$$P_{k}(X)=\overline{Crit(x_{1},\cdots,x_{k+1})_{|_{X_{reg}}}} \;,$$
with $Crit(x_{1},\cdots,x_{k+1})_{|_{X_{reg}}}$ the usual critical
locus of points $x \in X_{reg}$ where the differentials of these
functions restricted to $X_{reg}$ are linearly dependent. For
general coordinates $x_{i}$, the polar variety $P_{k}(X)$ has pure
dimension k or it is empty, for all $0< k< n$. We have
$P_{n}(X):=X$ and we set $P_{k}(X):=\emptyset$ for $k > n$.

We fix an algebraic Whitney stratification ${\cal S}$ with
connected strata. In this context $X$ need not be pure dimensional
and we only assume $n =\dim X < N$. Let $\alpha: X \rightarrow \Z$
be an ${\cal S}$-constructible function, meaning that the
restriction $\alpha_{|_{S}}$ is constant for all strata $S\in
{\cal S}$.

Sch\"{u}rmann and  Tib\u ar define the $k$-th MacPherson cycle of
$\alpha$ ($0\leq k\leq n$) by:
\begin{equation}MP_{k}(\alpha):=\displaystyle\sum_{S\in {\cal S}}(-1)^{{\rm
dim}\;S}\eta(S,\alpha)(\psi_{S})_{*}[P_{k}(\overline{S})],\end{equation}
where $\psi_{S}$ is the inclusion of $\overline{S}$ in $X$ and
$P_{k}(\overline{S})$ is the $k$-th global polar variety of the
algebraic closure $\overline{S}\subset\mathbb{C}^{N}$ of the
stratum $S$.

\begin{remark}\label{Massey-polar}
Let $\{S_{\alpha}\}$ be any Whitney stratification of $X\subset
\C^{N}$ (with connected strata) with respect to which
$F^{\bullet}$ is constructible. Massey
 obtains an important characterization of the
cycles $\Lambda_{i}(F^{\bullet})$ using the polar varieties
\linebreak (\cite[Corollary 10.15]{Massey} or \cite{MasseyDuke}).
We call the $k^{th}$ {\it Massey cycle of the constructible sheaf}
$F^{\bullet}$
 the cycle
\begin{equation}\label{indice1}\Lambda_{k}(F^{\bullet})=\sum_{\alpha} m_{\alpha}
(\psi_{\alpha})_{*}[P_{k}(\overline{S}_{\alpha})],\end{equation}
 where $\psi_{\alpha}:\overline{S}_{\alpha}\hookrightarrow X$ is the
inclusion,
$m_{\alpha}=(-1)^{d-d_{\alpha}-1}\chi(\phi_{g_{|_{N}}}F_{|_{N}}^{\bullet})_{p}$,
with $g$ being a non-degenerate covector at $p \in S_\alpha$ with
respect to the fixed stratification,
 $N$ is a germ of a closed complex submanifold
which is transversal to $S_{\alpha}$ with $N \cap
S_{\alpha}=\{p\}$, and $P_{k}(\overline{S}_{\alpha})$ denotes the
(affine) polar variety of dimension k of $\overline{S}_{\alpha}$.
Hence, by Lemma \ref{indice} and Remark
\ref{ConstructibleFunction}, we have that
\begin{equation}\Lambda_{k}(F^{\bullet})=(-1)^{d}MP_{k}(\beta),\end{equation}
where $\beta(p)=\chi(F^{\bullet})_{p}$ is a constructible function
on $X$ and $d=\dim X$.


\end{remark}

The most important result of \cite{ST} is that the cycle
 $MP_{k}(\alpha)$ represents the $k$-th dual
Schwartz-MacPherson class $\check{c}_{k}^{SM}(\alpha)$ in the Chow
group $A_{k}(X)$, where \linebreak
$\check{c}_{k}^{SM}(\alpha)=(-1)^{k}c_{k}^{SM}(\alpha)$. That is,
$$c_{k}^{SM}(\alpha)=(-1)^{k}MP_{k}(\alpha)=(-1)^{k}\displaystyle\sum_{S\in
{\cal S}}(-1)^{{\rm
dim}\;S}\eta(S,\alpha)(\psi_{S})_{*}[P_{k}(\overline{S})].$$ Hence
 Sch\"{u}rmann and  Tib\u ar describe in this way the Schwartz-MacPherson classes via
affine polar varieties.

These definitions and results motivate the following construction
in the projective context.

\begin{definition} Let $X$ be a complex analytic space in $\C P^{N}$ of pure dimension $d$. The $k$-th polar variety of $X$ is defined by
\begin{center}$\mathbb{P}_{k}(X)=\overline{\{ x \in X_{reg} \; | \; {\rm dim} \left(T_{x}X_{reg}\cap L_{k+2}\right) \geq d-k-1 \}}$,\end{center}
where $L_{k+2}$ is a fixed plane of the codimension ${k+2}$ in $\C
P^{N}$  and $T_xX_{reg}$ is the projective tangent space of $X$ at
a regular point $x$.
\end{definition}

We observe that this definition is an extension of the local
definition given in  \cite{Le-Teissier}. Notice also that for
$L_{k+2}$ sufficiently general, the dimension of
$\mathbb{P}_{k}(X)$ in $X$ is equal to $k$. {Thus we are indexing
the polar varieties by their dimension and not by their
codimension, as it is usually done.} Observe also that the class
$[\mathbb{P}_{k}(X)]$ of $\mathbb{P}_{k}(X)$ modulo rational
equivalence in the Chow group $A_{k}(X)$ does not depend on
$L_{k+2}$ provided this is sufficiently general.

\begin{definition}
The class  $[\mathbb{P}_{k}(X)]$ is the {\it $k$-th polar class
of} $X$.
\end{definition}

For any given $F^{\bullet}\in {\cal D}^{b}_{{\cal S}}(X)$, where
${\cal S}=\{S_{\alpha}\}$ is a Whitney stratification of $X$,
define the MacPherson cycle
\begin{equation}\label{GlobalCycle}MP_{k}(F^{\bullet}):=\displaystyle\sum_{\alpha}(-1)^{{\rm
dim}\;S_{\alpha}}\eta(S_{\alpha},F^{\bullet})(\psi_{\alpha})_{*}[\mathbb{P}_{k}(\overline{S}_{\alpha})],\end{equation}
where $\psi_{\alpha}:\overline{S}_{\alpha}\hookrightarrow X$ is
the inclusion. Sometimes, we also write this cycles by
$MP^{\mathbb{P}}_{k}(F^{\bullet})$ and
$\eta(S_{\alpha},F^{\bullet})$ by
$\eta^{\mathbb{P}}(S_{\alpha},F^{\bullet})$, in order to emphasize
the projective nature of this cycle and of this number. If $\beta$
is the constructible function associated to $F^{\bullet}$ as in
Remark $\ref{ConstructibleFunction}$ we also denote this cycle
$MP^{\mathbb{P}}_{k}(F^{\bullet})$ by
$MP^{\mathbb{P}}_{k}(\beta)$.

Analogously, motivated by equation (\ref{indice1}), we define:

\begin{definition}\label{indice2} The {\it projective Massey cycle of }
$F^{\bullet}$ is:
$$\Lambda^{\mathbb{P}}_{k}(F^{\bullet})=\sum_{\alpha} m_{\alpha}
(\psi_{\alpha})_{*}[\mathbb{P}_{k}(\overline{S}_{\alpha})], $$
where $\psi_{\alpha}:\overline{S}_{\alpha}\hookrightarrow X$ is
the inclusion and
$m_{\alpha}=(-1)^{d-d_{\alpha}-1}\chi(\phi_{g_{|_{N}}}F_{|_{N}}^{\bullet})_{p}$\,;
 $g$ is a non-degenerate covector at $p \in S_\alpha$ with
respect to the fixed stratification ${\cal S}=\{S_\alpha\}$,
 $N$ is a germ of a closed complex submanifold
which is transversal to $S_{\alpha}$ with $N \cap
S_{\alpha}=\{p\}$, and $\mathbb{P}_{k}(\overline{S}_{\alpha})$
denotes the (projective) polar variety of dimension $k$ of
$\overline{S}_{\alpha}$. When $\beta(p) =\chi(F^{\bullet})_{p}$
for $F^{\bullet}\in {\cal D}^{b}_{{\cal S}}(X)$ we denote
$\Lambda^{\mathbb{P}}_{k}(F^{\bullet})$ also by
$\Lambda^{\mathbb{P}}_{k}(\beta).$
\end{definition}

\begin{remark} We notice  that the projective polar variety coincides with
the compactification of the affine polar variety. In fact there is
a Zariski open dense $U$ of hyperplanes of $\C P^{N}$ such that
for all $H \in U$ we have that
$$ \mathbb{P}_{k}(X) \cap \left( \mathbb{P}^{N}\setminus H
\right)=P_{k}(X \cap (\mathbb{P}^{N}\setminus H))\,.$$
\end{remark}

Consequently,
$$i^{*}(MP_{k}^{\mathbb{P}}(F^{\bullet}))=MP_{k}(F^{\bullet}_{|_{X \cap (\mathbb{P}^{N}\setminus H)}})\quad {\rm and}\quad
i^{*}(\Lambda_{k}^{\mathbb{P}}(F^{\bullet}))=\Lambda_{k}(F^{\bullet}_{|_{X
\cap (\mathbb{P}^{N}\setminus H)}}),$$ where
$i:\mathbb{P}^{N}\setminus H\hookrightarrow \mathbb{P}^{N}$ for
all $H \in U^{'}\subset U$ and $U^{'}$ is such that if $H \in
U^{'}$, $F^{\bullet}_{|_{X \cap (\mathbb{P}^{N}\setminus H)}}$ is
constructible  with respect  to the induced Whitney stratification
of $X\setminus H$ with strata
 $${\cal S}_{|_{X \cap (\mathbb{P}^{N}\setminus H)}}:=\{S\setminus H \;|\: S \in
{\cal S}\}.$$ Hence, for all $p \in (\mathbb{P}^{N}\setminus H)
\cap X$ we have that
 $\eta^{\mathbb{P}}(S,F^{\bullet})=\eta(S,F^{\bullet}_{|_{X \cap
(\mathbb{P}^{N}\setminus H)}}),$ for all $S \in {\cal S}_{|_{X
\cap (\mathbb{P}^{N}\setminus H)}}$.

Since two cycles on $X$ agree if and only if their pull-back to
every affine open covering of $X$ agree,  by equation
(\ref{indice1}), Remark \ref{indice3},  and the above discussion
we have:
\begin{proposition} \label{p: Massey cycle} The
Massey cycle and the MacPherson cycle agree up to sign. That is:
\begin{equation}\label{indice4}
\Lambda_{k}^{\mathbb{P}}(F^{\bullet})=(-1)^{d}MP_{k}^{\mathbb{P}}(F^{\bullet}),
\end{equation}  where $d=\dim X$.  And if $X$ is a hypersurface $Z$, then we have:
\begin{equation}\label{5}(i_{sing})_{*}\left(\Lambda_{k}(Z)\right)=\Lambda^{\mathbb{P}}_{k}(w),\end{equation} where $w$ is the constructible function on $Z$ defined as in Corollary \ref{local1} by $w(p)=\chi(F_{s,p})-1$ and
$i_{sing}:Z_{sing}\rightarrow Z$ is the natural inclusion.
\end{proposition}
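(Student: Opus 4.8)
The plan is to treat the two displayed equalities separately: the first, \eqref{indice4}, is a formal sign identity, whereas the second, \eqref{5}, is proved by a local-to-global argument.

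For \eqref{indice4} I would feed Lemma \ref{indice} directly into the two defining formulas. That lemma gives $m_{\alpha}=(-1)^{d-d_{\alpha}}\eta(S_{\alpha},F^{\bullet})$, so substituting into Definition \ref{indice2} and using $(-1)^{-d_{\alpha}}=(-1)^{d_{\alpha}}$ yields
$$\Lambda^{\mathbb{P}}_{k}(F^{\bullet})=\sum_{\alpha}(-1)^{d-d_{\alpha}}\eta(S_{\alpha},F^{\bullet})(\psi_{\alpha})_{*}[\mathbb{P}_{k}(\overline{S}_{\alpha})]=(-1)^{d}\sum_{\alpha}(-1)^{d_{\alpha}}\eta(S_{\alpha},F^{\bullet})(\psi_{\alpha})_{*}[\mathbb{P}_{k}(\overline{S}_{\alpha})].$$
Comparing the last sum term by term with the definition \eqref{GlobalCycle} of $MP^{\mathbb{P}}_{k}(F^{\bullet})$ gives $\Lambda^{\mathbb{P}}_{k}(F^{\bullet})=(-1)^{d}MP^{\mathbb{P}}_{k}(F^{\bullet})$. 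No geometry enters beyond this bookkeeping.

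For \eqref{5} the idea is to check the identity after restricting to each chart of an affine cover and then to invoke the principle, recalled just before the statement, that a cycle on $Z$ is determined by its pull-backs to the charts $\mathbb{P}^{N}\setminus H$. First I would take $F^{\bullet}=P^{\bullet}$ to be the shifted sheaf of vanishing cycles of $s$ whose associated constructible function is, by Corollary \ref{local1} and Remark \ref{ConstructibleFunction}, exactly $w(p)=\chi(F_{s,p})-1$; thus $\Lambda^{\mathbb{P}}_{k}(w)=\Lambda^{\mathbb{P}}_{k}(P^{\bullet})$. Restricting to $U=\mathbb{P}^{N}\setminus H$, with $H$ chosen in the dense open set of the remark preceding the proposition, the projective polar varieties become the affine ones, so $i^{*}\Lambda^{\mathbb{P}}_{k}(P^{\bullet})=\Lambda_{k}(P^{\bullet}_{|_{U\cap Z}})$ is the affine Massey cycle of Remark \ref{Massey-polar}.

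The remaining link is the chain of local identifications. By Remark \ref{Massey-polar} the affine Massey cycle equals Massey's characteristic polar cycle $\Lambda_{k}(P^{\bullet})$, which by Remark \ref{indice3} coincides with the local generic L\^e cycle $\Lambda^{k}_{s_{i}}$ of the local equation $s_{i}$ of $s$. On the other side, by \eqref{globalLeCycle} the global L\^e cycle restricts as $(\Lambda_{k}(Z))_{|_{U_{i}}}=\Lambda^{k}_{s_{i}}$, and hence the restriction of $(i_{sing})_{*}\Lambda_{k}(Z)$ to $U\cap Z$ is again $\Lambda^{k}_{s_{i}}$, both members being supported in $Z_{sing}$ (where $w$ is concentrated, the Milnor fibre being contractible at smooth points). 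Thus the two sides of \eqref{5} have the same restriction to every chart, and the local-to-global principle delivers the equality in the Chow group of $Z$. The step I expect to be most delicate is not any isolated computation but the uniform matching of conventions across charts: one must confirm that the shift on $P^{\bullet}$ really produces the function $w$ (Corollary \ref{local1}), that the Whitney stratifications restrict compatibly so that the normal Morse indices agree, $\eta^{\mathbb{P}}(S,F^{\bullet})=\eta(S,F^{\bullet}_{|_{U\cap Z}})$, on overlaps, and that Massey's uniqueness of the characteristic polar cycles renders the identification $\Lambda_{k}(P^{\bullet})=\Lambda^{k}_{s_{i}}$ independent of the generic coordinates, so that the local cycles patch to a well-defined global object.
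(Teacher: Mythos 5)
Your derivation of (\ref{indice4}) is correct and is, if anything, cleaner than the paper's: substituting $m_{\alpha}=(-1)^{d-d_{\alpha}}\eta(S_{\alpha},F^{\bullet})$ from Lemma \ref{indice} into Definition \ref{indice2} and comparing termwise with (\ref{GlobalCycle}) is exactly the computation the authors already perform for the affine Massey cycle in Remark \ref{Massey-polar}, and it transfers verbatim to the projective sums; the paper instead deduces it by restricting to affine charts, which is unnecessary for this purely formal sign identity.

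Your argument for (\ref{5}) is the same as the paper's, and it inherits the same gap --- one the authors themselves later retract (see the Erratum, where ``equation (14)'', i.e.\ (\ref{5}), is named as the first error in the paper). The failure is in the local-to-global step. The principle that a cycle on $Z$ is determined by its restrictions to an affine open cover is valid for honest algebraic cycles, but $\Lambda_{k}(Z)$ is not given as a cycle: it is defined as $b_{*}\left(c_{1}({\cal L})^{n-k}\cap [D]\right)$, i.e.\ as a Segre class, hence only as an element of the Chow group, and the identification (\ref{globalLeCycle}) with Massey's local L\^e cycles on charts is the Gaffney--Gassler theorem, which is only an equality of rational equivalence classes in $A_{*}(U_{i})$. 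A class in $A_{*}(Z)$ is not determined by its images under the restriction maps $A_{*}(Z)\rightarrow A_{*}(U_{i})$ of an open cover; by the excision sequence these maps have large kernels in general (for instance $A_{0}(\mathbb{P}^{1})=\mathbb{Z}$ surjects onto $A_{0}(\mathbb{A}^{1})=0$). So knowing that both sides of (\ref{5}) restrict to $\Lambda^{k}_{s_{i}}$ on each chart does not allow you to conclude that they agree in $A_{*}(Z)$. This is precisely why the Erratum abandons the Segre-class definition of the global L\^e cycles and instead defines them directly through the Massey/MacPherson cycles, turning the analogue of (\ref{5}) into a definition rather than a theorem. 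Your closing caveat about matching conventions across charts points at the right neighborhood, but the real obstruction is not a convention mismatch: it is that rational equivalence classes cannot be patched from an open cover.
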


Notice that this result actually gives a cycle structure to the
L\^e cycle $\Lambda_{k}(Z)$, which is   defined only  as a class
in the Chow group of $Z_{\rm sing}$. \vspace{0.2cm} The next
result, stated as Theorem 2 in the introduction, is the projective
analogous of the formula in \cite{ST} describing
 the Schwartz-MacPherson class in terms of polar cycles.

\begin{theorem} \label{3}Let $X$ be an n-dimensional projective variety endowed with a Whitney stratification with connected strata $S_{\alpha}$.
 Let $\;i_{\overline{S}_{\alpha},X}:\overline{S}_{\alpha}\rightarrow
X$ be the natural inclusion. Consider $\varphi:X \rightarrow \C
P^{N}$ a closed immersion and ${\cal L}={\cal O}_{\C P^{N}}(1)$.
If $\beta:X\rightarrow \Z$ is a constructible function with
respect to this stratification, then
$$c_{k}^{SM}(\beta)=\sum_{\alpha}\eta(S_{\alpha},\beta)\sum_{i=k}^{d_{\alpha}}(-1)^{d_{\alpha}-i}
\left( \begin{array}{c}i+1\\
k+1\\
\end{array} \right)(i_{\overline{S}_{\alpha},X})_{*}\left(c_{1}(\varphi^{*}{\cal L})^{i-k} \cap
[\mathbb{P}_{i}(\overline{S}_{\alpha})]\right)$$
$$=\sum_{i\geq k}(-1)^{i}
\left( \begin{array}{c}i+1\\
k+1\\
\end{array} \right)\left(c_{1}(\varphi^{*}{\cal L})^{i-k} \cap
MP^{\mathbb{P}}_{i}(\beta)\right),$$
where $[\mathbb{P}_{i}(\overline{S}_{\alpha})]$ is the $i$-th polar class of  $\overline{S}_{\alpha}$  and $MP^{\mathbb{P}}_{i}(\beta)$ is the $i$-th MacPherson cycle of $\beta$.
\end{theorem}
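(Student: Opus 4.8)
The plan is to combine two ingredients that are already in place: the expression \eqref{SMclass} for the dual Schwartz--MacPherson class in terms of the dual Mather classes of the closures of the strata, weighted by the normal Morse indices $\eta(S_{\alpha},\beta)$, and R. Piene's characterization of the Mather classes of a projective variety through its polar classes \cite[Th\'eor\`eme 3]{Piene}. Since $\varphi$ is a closed immersion, each $\overline{S}_{\alpha}$ is a projective subvariety of $\C P^{N}$ of dimension $d_{\alpha}$, so Piene's theorem applies to it and yields, after passing to dual classes and reindexing the polar varieties by dimension, a formula of the shape
$$\check{c}_{k}^{Ma}(\overline{S}_{\alpha})=\sum_{i=k}^{d_{\alpha}}(-1)^{i-k}\binom{i+1}{k+1}\,c_{1}(\varphi^{*}{\cal L})^{i-k}\cap[\mathbb{P}_{i}(\overline{S}_{\alpha})]\,,$$
where $c_{1}(\varphi^{*}{\cal L})$ restricts on $\overline{S}_{\alpha}$ to the hyperplane class of its embedding and $[\mathbb{P}_{i}(\overline{S}_{\alpha})]$ is exactly the $i$-th polar class introduced above. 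First I would establish this formula, checking that the polar varieties of our Definition (cut out by a general $L_{i+2}$) coincide with the ones in Piene's statement, and that her binomial coefficients become $\binom{i+1}{k+1}$ under the dimension indexing.

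With this in hand, I would substitute the displayed expression into \eqref{SMclass}. Using the projection formula to move $c_{1}(\varphi^{*}{\cal L})^{i-k}$ through $(i_{\overline{S}_{\alpha},X})_{*}$, together with the factor $(-1)^{k}$ coming from $\check{c}_{k}^{SM}(\beta)=(-1)^{k}c_{k}^{SM}(\beta)$, and collecting the signs $(-1)^{d_{\alpha}}$ from \eqref{SMclass} and $(-1)^{i-k}$ from Piene's formula, one sees that the three signs combine to $(-1)^{d_{\alpha}-i}$. This produces the first displayed identity of the theorem, with the normal Morse indices carried over unchanged.

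For the second identity I would interchange the two summations, summing over $i\ge k$ first; this is legitimate because $[\mathbb{P}_{i}(\overline{S}_{\alpha})]=0$ as soon as $i>d_{\alpha}$, so the inner sum may be taken over all $i\ge k$. Writing $(-1)^{d_{\alpha}-i}=(-1)^{i}(-1)^{d_{\alpha}}$ and pulling the factor $(-1)^{i}\binom{i+1}{k+1}c_{1}(\varphi^{*}{\cal L})^{i-k}$ out of the sum over $\alpha$, the remaining expression $\sum_{\alpha}(-1)^{d_{\alpha}}\eta(S_{\alpha},\beta)(\psi_{\alpha})_{*}[\mathbb{P}_{i}(\overline{S}_{\alpha})]$ is, by its definition \eqref{GlobalCycle}, precisely the MacPherson cycle $MP_{i}^{\mathbb{P}}(\beta)$, which yields the second formula.

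The hard part will be the first step: Piene's result is stated for the ordinary Chern--Mather classes with polar varieties indexed by codimension, whereas here we need the \emph{dual} classes with polar varieties indexed by \emph{dimension}. Reconciling the two conventions---carrying out the dualization, the reindexing, and verifying that the resulting coefficient is exactly $(-1)^{i-k}\binom{i+1}{k+1}$---is the delicate bookkeeping on which everything else rests; once that formula is pinned down, the two substitution-and-sign steps above are routine.
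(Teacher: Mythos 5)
Your proposal follows the same route as the paper: it combines the identity \eqref{SMclass} expressing $\check{c}_{k}^{SM}(\beta)$ through the dual Mather classes of the $\overline{S}_{\alpha}$ weighted by the normal Morse indices, with Piene's polar description of the Mather classes (equation \eqref{1} in the text), and then resolves the signs via $\check{c}_{k}^{SM}=(-1)^{k}c_{k}^{SM}$ and $\check{c}_{k}^{Ma}=(-1)^{d_{\alpha}-k}c_{k}^{Ma}$; your sign bookkeeping $(-1)^{k}(-1)^{d_{\alpha}}(-1)^{i-k}=(-1)^{d_{\alpha}-i}$ and the final interchange of sums to recognize $MP^{\mathbb{P}}_{i}(\beta)$ both check out. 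This is essentially the paper's own proof, merely with the dualization and reindexing of Piene's formula made more explicit.
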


\begin{proof} For any purely dimensional projective variety $V$ of dimension
$d$ we have, by R. Piene's work \cite{Piene}, the following
characterization of the Mather classes via polar varieties:

\begin{equation}\label{1}c_{k}^{Ma}(V)=\displaystyle\sum_{i=k}^{n} (-1)^{n-i}
\left( \begin{array}{c}i+1\\
k+1\\
\end{array} \right)\left(c_{1}(\varphi^{*}{\cal L})^{i-k} \cap
[\mathbb{P}_{i}(V)]\right).\end{equation} By equation
(\ref{SMclass}) we have that
\begin{equation}\label{2}\check{c}_{k}^{SM}(\beta)=\sum_{\alpha}(-1)^{
d_{\alpha}}\eta(S_{\alpha},\beta)(i_{\overline{S}_{\alpha},X})_{*}\check{c}_{k}^{Ma}(\overline{S}_{\alpha}).\end{equation}
Therefore by equations (\ref{1}), (\ref{2}) and the relationship
between the MacPherson class and its dual class, {\small
$$c_{k}^{SM}(\beta)=\sum_{\alpha}\eta(S_{\alpha},\beta)\sum_{i=k}^{d_{\alpha}}(-1)^{d_{\alpha}-i}
\left( \begin{array}{c}i+1\\
k+1\\
\end{array} \right)(i_{\overline{S}_{\alpha},X})_{*}\left(c_{1}(\varphi^{*}{\cal L})^{i-k} \cap
[\mathbb{P}_{i}(\overline{S}_{\alpha})]\right)$$

$$=\sum_{i\geq k}(-1)^{i}
\left( \begin{array}{c}i+1\\
k+1\\
\end{array} \right)\left(c_{1}(\varphi^{*}{\cal L})^{i-k} \cap
MP^{\mathbb{P}}_{i}(\beta)\right).$$}
 \end{proof}

\section{Milnor classes}

 Let $M$ be as before, a  compact complex manifold of
dimension $n+1$. We  consider a singular hypersurface $Z$ defined
by a holomorphic section $s$  of some line bundle $L$ over $M$.
The Milnor classes of $Z$  are defined as the difference between
the Schwartz-MacPherson classes and the Fulton-Johnson classes of
$Z$, as we explain below.

The  Schwartz-MacPherson classes were described in the previous
section, and they provide an extension for singular varieties of
the classical Chern classes of complex manifolds. There is another
natural
 generalisation of the  Chern classes due to W. Fulton and K. Johnson in \cite{FJ}.
In the case we consider here, where $Z$ is a hypersurface,  this
uses
 the virtual tangent bundle $\tau(Z)$ of $Z$, which plays the role of
the tangent bundle. This virtual bundle is by definition:
$$\tau(Z):= T M_{|_{Z}}-L_{|_{Z}}\,,$$
where $T M$ denotes the tangent bundle of $ M$, $L$ is the bundle
of $Z$ and the difference is in the  KU-theory of $Z$. Notice that
restricted to the regular part of $Z$, the bundle $L$ is
isomorphic to the normal bundle of $Z$ in $M$.
 The Chern classes of the virtual  bundle $\tau(Z)$ live in the cohomology of $Z$, and the Poincar\'e morphism carries them into homology classes:
$$c^{FJ}(Z):=c(T M_{|_{Z}}-L_{|_{Z}})\cap [Z],$$
where  the total  Chern class of $(T M_{|_{Z}}-L_{|_{Z}})$ is
defined by $c^{}(T M_{|_{Z}}-L_{|_{Z}})=i^{*}(c^{}(T M)\cdot
c^{}(L)^{-1})$, with $i:Z\hookrightarrow  M $  being the
inclusion.

\begin{definition} (\cite{Aluffi, BLSS1, BLSS2, Yokura, Par-Pr}) The {\it total Milnor class} of $Z$ is defined by:
$${\cal M}(Z):=(-1)^{n-1}(c^{FJ}(Z)-c^{SM}(Z))\,.$$
We denote by ${\cal M}_k(Z)$ the component of this total class in
$H_{2k}(Z)$, $k= 0,\cdots, 2n$,  and call it {\it the Milnor class
of $Z$ of degree} $k$.
\end{definition}

Milnor classes were originally defined as elements in the singular
homology group of the hypersurface (or a complete intersection).
It was observed in the work of W. Fulton and  K. Johnson
(\cite{FJ}) and G. Kennedy (\cite{K}) that the Fulton-Johnson
classes and the Schwartz-MacPherson classes, and hence the Milnor
classes, can actually be considered as cycle classes in the
corresponding Chow group.

In the sequel we will use the following  important
characterization of Milnor classes obtained by A. Parusinski and
P. Pragacz in \cite[Theorem 0.2]{Par-Pr}:

\begin{equation}\label{P}{\cal M}(Z):=\sum_{S \in {\cal
S}}\;\gamma_{S}\left(c(L_{|_{Z}})^{-1} \cap
(i_{\overline{S},Z})_{*}c^{SM}(\overline{S})\right),\end{equation}
where ${\cal S}=\{ S\}$ is an analytic  Whitney stratification of
$Z$ with connected strata, such that  $Z _{\rm sing}$ is union of
strata, $i_{\overline{S},Z}:\overline{S}\hookrightarrow Z$ is the
inclusion and $\gamma_{S}$ is the function defined on each stratum
$S$ as follows: For each $x \in S \subset Z$, let $F_x$ be a {\it
local Milnor fibre} (recall $Z$ is a hypersurface in the complex
manifold $M$), and  let  $\chi(F_{x})$ be its Euler
characteristic. We set:
$$\mu(x;Z):= (-1)^{n}\;(\chi(F_{x})-1) \,,$$ and
call it the {\it local Milnor number} of $Z$ at $x$. This number
is constant on each Whitney stratum, by the topological triviality
of Whitney stratifications, so we denote it by $\mu_{S}$. Then
$\gamma_{S}$ is defined inductively by:
$$\gamma_{S}=\mu_{S} - \sum_{S' \neq S,\;\overline{S'} \supset S}
\gamma_{S{'}}.$$

For instance, suppose that the singular set of $Z$ is a finite
number of points $x_{1},\cdots,x_{r}$, and let $\mu_{x_{i}}$
denote the corresponding Milnor numbers. Then by \cite{SS} we have
${\cal M}_0(Z)=\sum_{i=1}^{r}\mu_{x_{i}}[x_{i}] \in H_{0}(Z)$, and
 all other Milnor classes are $0$ by \cite{Suwa}. More generally
one has the following result \cite[Theorem 5.2 and Corollary
5.13]{BLSS1, BLSS2},  that we  prove here for completeness, using
the description of  Milnor classes  by A. Parusinski and P.
Pragacz (see equation (\ref{P})).

\begin{proposition}\label{t. localization}
The Milnor classes have support in  the singular set $Z _{\rm
sing}$. That is, $${\cal M}(Z)=\sum_{S \subset Z _{\rm
sing}}\;\gamma_{S}\left(c(L_{|_{Z}})^{-1} \cap
(i_{\bar{S},Z})_{*}c^{SM}(\bar{S})\right)\,.$$ Hence these classes
are all zero in dimensions above the dimension of $Z _{\rm
sing}$.\end{proposition}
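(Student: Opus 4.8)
The plan is to prove the claim directly from the Parusinski--Pragacz formula (\ref{P}), the only ingredient being the elementary fact that the Schwartz--MacPherson class $c^{SM}(\overline{S})$ of the closure of a stratum is supported on $\overline{S}$. First I would observe that the total Milnor class is, by (\ref{P}), a sum over all strata $S \in {\cal S}$ of terms of the form $\gamma_{S}\left(c(L_{|_{Z}})^{-1} \cap (i_{\overline{S},Z})_{*}c^{SM}(\overline{S})\right)$. The goal is to show that every term indexed by a stratum $S \not\subset Z_{\rm sing}$ vanishes, so that the sum reduces to strata contained in the singular set.

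The key step is to examine the coefficient $\gamma_{S}$ on the open stratum (or strata) sitting in the regular part of $Z$. Recall that $Z_{\rm sing}$ is a union of strata, so any stratum $S$ not contained in $Z_{\rm sing}$ lies entirely in the regular part $Z_{\rm reg}$. At a point $x$ of such a stratum, $Z$ is smooth, so the local Milnor fibre $F_x$ is contractible and $\chi(F_x)=1$; hence $\mu(x;Z)=(-1)^{n}(\chi(F_x)-1)=0$, i.e. $\mu_{S}=0$. I would then argue, by the inductive definition $\gamma_{S}=\mu_{S}-\sum_{S'\neq S,\;\overline{S'}\supset S}\gamma_{S'}$ together with a downward induction on dimension, that $\gamma_{S}=0$ for every stratum $S$ contained in $Z_{\rm reg}$. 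The base case is the top-dimensional open stratum, where the sum over $S'\supsetneq S$ is empty and $\gamma_{S}=\mu_{S}=0$; the inductive step uses that any stratum $S'$ with $\overline{S'}\supset S$ and $S\subset Z_{\rm reg}$ must itself lie in $Z_{\rm reg}$ (since $Z_{\rm sing}$ is closed and a union of strata, a stratum whose closure meets $Z_{\rm reg}$ cannot lie in $Z_{\rm sing}$), so all the $\gamma_{S'}$ appearing are zero by induction, forcing $\gamma_{S}=\mu_{S}=0$.

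With the vanishing of the coefficients established, I would conclude that only strata $S\subset Z_{\rm sing}$ contribute, giving exactly the displayed formula
$${\cal M}(Z)=\sum_{S \subset Z_{\rm sing}}\gamma_{S}\left(c(L_{|_{Z}})^{-1}\cap(i_{\overline{S},Z})_{*}c^{SM}(\overline{S})\right).$$
For the final sentence of the statement, I would note that for each surviving stratum the class $(i_{\overline{S},Z})_{*}c^{SM}(\overline{S})$ is supported on $\overline{S}\subset Z_{\rm sing}$, and capping with $c(L_{|_{Z}})^{-1}$ preserves support; hence the whole class is supported in $Z_{\rm sing}$ and its components in degrees exceeding $\dim Z_{\rm sing}$ vanish.

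The main obstacle I anticipate is the careful handling of the inductive argument for the vanishing of $\gamma_{S}$: one must verify that the combinatorics of the partial order on strata (given by $\overline{S'}\supset S$) is compatible with the induction, in particular that every stratum $S'$ entering the recursion for a regular stratum $S$ is again regular. This requires using that $Z_{\rm sing}$ is closed and saturated as a union of strata, which is exactly the standing hypothesis on the stratification $\cal S$. The geometric input — that smoothness forces a contractible Milnor fibre and thus $\mu_{S}=0$ — is immediate, so the only real work is bookkeeping with the recursive definition of $\gamma_{S}$.
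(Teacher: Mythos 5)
Your proposal is correct and follows essentially the same route as the paper: both start from the Parusi\'nski--Pragacz formula (\ref{P}), observe that $\chi(F_x)=1$ hence $\mu_S=0$ on strata in $Z_{\rm reg}$, conclude $\gamma_S=0$ there, and then kill the high-degree components by a dimension count on $\overline{S}\subset Z_{\rm sing}$. The only difference is that you spell out the downward induction on the stratum poset showing $\mu_S=0\Rightarrow\gamma_S=0$ for regular strata, which the paper leaves implicit.
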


\begin{proof} Let ${\cal S}=\{ S\}$ be an analytic  Whitney
stratification of $Z$ with connected strata, such that   $Z _{\rm
sing}$ is union of strata. Notice that the closure $\overline S$
of each stratum  is again analytic, so it has its own
Schwartz-MacPherson classes $c^{SM}(\overline{S})$. By equation
(\ref{P}) we have that
$${\cal M}(Z):=\sum_{S \in {\cal
S}}\;\gamma_{S}\left(c(L_{|_{Z}})^{-1} \cap
(i_{\overline{S},Z})_{*}c^{SM}(\overline{S})\right).$$ If $ x $ is
a regular point of $Z$, then $\chi(F_{x})=1$ and $\mu_{S}=0$.
Hence
$${\cal M}(Z)=\sum_{S \subset Z _{\rm sing}}\;\gamma_{S}\left(c(L_{|_{Z}})^{-1} \cap (i_{\overline{S},Z})_{*}c^{SM}(\overline{S})\right)\,.$$
Since for strata in the singular set of $Z$ one has ${\rm dim}\;
\overline{S}\leq {\rm dim}(Z _{\rm sing})$, then $c^{SM}_{k}(S)=0$
if $k>{\rm dim}(Z _{\rm sing})$. Therefore ${\cal M}_{k}(Z)=0$ if
$k>{\rm dim}(Z _{\rm sing})$.
\end{proof}

\vspace{0.3cm} Thus, if the dimension of $Z _{\rm sing}$ is $d$,
then we have Milnor classes ${\cal M}_{k}(Z) \in H_{2k}(Z)$, $k =
0, \cdots, d$,  which are non-zero generally speaking, and each
has support in $Z_{\rm sing}$.

\section{Milnor classes via L\^e cycles}

This section is devoted to proving Theorem \ref{principal}. We
first prove:

\begin{lemma}[{\bf Main Lemma}]\label{l: main}
 Let  $M^{n+1}$ be a compact complex manifold, consider  a very ample line bundle $L$ on $M$
 and let  $Z$ be the zero set of a holomorphic section
$s$ of  $L$. Then the corresponding Milnor classes and L\^e cycles
satisfy:
$${\cal M}_{k}(Z)=  \sum_{j\geq
0}\sum_{i\geq k+j}(-1)^{i+j}
\left( \begin{array}{c}i+1\\
k+j+1\\
\end{array} \right)
c_{1}(L|_Z)^{i-k} \cap \Lambda_{i}(Z) .$$ This equality is in the
Chow group of $Z$ and, furthermore, all cycles in this formula
actually have support in the singular set of $Z$.
\end{lemma}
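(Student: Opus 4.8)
The plan is to begin with the Parusinski--Pragacz description of the total Milnor class, equation~(\ref{P}), and collapse the sum over strata into a single Schwartz--MacPherson class before feeding it into Theorem~\ref{3} and Proposition~\ref{p: Massey cycle}. Since $c(L|_Z)^{-1}$ is a fixed class acting by cap product, and the Schwartz--MacPherson transformation is additive and commutes with the proper (closed) pushforwards $i_{\overline{S},Z}$, equation~(\ref{P}) rewrites as
$${\cal M}(Z)=c(L|_Z)^{-1}\cap c^{SM}\Big(\sum_{S\in{\cal S}}\gamma_S\,1_{\overline{S}}\Big).$$
The decisive combinatorial observation is that the inductive definition $\gamma_{S}=\mu_{S}-\sum_{S'\neq S,\;\overline{S'}\supset S}\gamma_{S'}$ is a M\"obius inversion: evaluating the constructible function $\sum_S\gamma_S 1_{\overline{S}}$ at $x\in S_0$ gives $\sum_{S:\,x\in\overline{S}}\gamma_S=\mu_{S_0}$, using the frontier condition of the Whitney stratification. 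Hence $\sum_S\gamma_S 1_{\overline{S}}=\mu(\cdot\,;Z)=(-1)^n w$, with $w(p)=\chi(F_{s,p})-1$ the constructible function of Corollary~\ref{local1}, so that
$${\cal M}(Z)=(-1)^n\,c(L|_Z)^{-1}\cap c^{SM}(w).$$

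Next I would extract the degree-$k$ component. Writing $c(L|_Z)^{-1}=\sum_{j\geq 0}(-1)^j c_1(L|_Z)^j$ and matching dimensions (each $c_1(L|_Z)^j$ lowers dimension by $j$) gives
$${\cal M}_k(Z)=(-1)^n\sum_{j\geq 0}(-1)^j\,c_1(L|_Z)^j\cap c^{SM}_{k+j}(w).$$
Here I apply Theorem~\ref{3} with $X=Z$ and $\varphi:Z\hookrightarrow\C P^N$ the restriction of the embedding of $M$ determined by the very ample bundle $L$, so that $\varphi^{*}{\cal O}_{\C P^N}(1)=L|_Z$. Theorem~\ref{3} then expresses $c^{SM}_{k+j}(w)=\sum_{i\geq k+j}(-1)^i\binom{i+1}{k+j+1}c_1(L|_Z)^{i-k-j}\cap MP^{\mathbb{P}}_i(w)$. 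Substituting and combining the two powers into $c_1(L|_Z)^{j}\cdot c_1(L|_Z)^{i-k-j}=c_1(L|_Z)^{i-k}$ turns the expression into the advertised double sum, carrying the sign $(-1)^{n}(-1)^{j}(-1)^{i}$.

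Finally I would convert the MacPherson cycles back to L\^e cycles. By Proposition~\ref{p: Massey cycle}, $MP^{\mathbb{P}}_i(w)=(-1)^{\dim Z}\Lambda^{\mathbb{P}}_i(w)=(-1)^{n}(i_{sing})_*\Lambda_i(Z)$, which in the Chow group of $Z$ is just $(-1)^n\Lambda_i(Z)$. The two factors of $(-1)^n$, one from $\mu=(-1)^n w$ and one from the Massey--MacPherson comparison, cancel, so that $(-1)^{n}(-1)^{i+j}(-1)^{n}=(-1)^{i+j}$ and
$${\cal M}_k(Z)=\sum_{j\geq 0}\sum_{i\geq k+j}(-1)^{i+j}\binom{i+1}{k+j+1}c_1(L|_Z)^{i-k}\cap\Lambda_i(Z).$$
Support in $Z_{\rm sing}$ is immediate, since $w$ vanishes off $Z_{\rm sing}$ and each $\Lambda_i(Z)$ occurring is already supported there (cf. Proposition~\ref{t. localization}). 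The step I expect to be most delicate is not any single identity but the bookkeeping of signs: one must keep disjoint the three independent sources of signs, namely the passage $\mu\leftrightarrow w$, the Massey--MacPherson sign $(-1)^{\dim Z}$, and the $(-1)^i$ internal to Theorem~\ref{3}, and verify they assemble into the single exponent $i+j$; alongside this, the justification that additivity of $c^{SM}$ and the projection formula let one factor out $c(L|_Z)^{-1}$ and replace the weighted sum of stratum classes by $c^{SM}(w)$ must be made precise.
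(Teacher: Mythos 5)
Your proposal is correct and follows essentially the same route as the paper: the Parusi\'nski--Pragacz formula, the identity $\sum_S\gamma_S 1_{\overline S}=(-1)^n w$ (which is \cite[Lemma 4.1]{Par-Pr}, cited at the corresponding step of the paper's proof), Theorem \ref{3}, and Proposition \ref{p: Massey cycle}, with the same sign bookkeeping. The only difference is organizational: you collapse the weighted sum over strata into the single constructible function $w$ \emph{before} applying Theorem \ref{3} once on $Z$, whereas the paper applies Theorem \ref{3} to each $\overline{S}_\alpha$ first and then re-sums using the additivity of the normal Morse index $\eta$ in its second argument --- the two computations are identical term by term.
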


\begin{proof}
Let ${\cal S}=\{S_{\alpha} \}$ be a Whitney stratification of $ Z
$ such that each stratum is connected and $Z_{{\rm sing}}$ is a
union of strata.

 Since $L$ is very ample, there is a closed immersion
$f:M\hookrightarrow \C P^{N}$ such that $L=f^{*}({\cal O}_{\C
P^{N}}(1))$, where ${\cal O}_{\C P^{N}}(1)$ is the tautological
bundle of $\C P^{N}$. Set ${\cal L}={\cal O}_{\C P^{N}}(1)$.

Consider the following inclusions and the closed immersion $f$:
$$\overline{S}_{\beta}\stackrel{i_{\overline{S}_{\beta},\overline{S}_{\alpha}}}{\hookrightarrow}
\overline{S}_{\alpha}\stackrel{\psi_{\alpha}}{\hookrightarrow}
Z\stackrel{g}{\hookrightarrow}M\stackrel{f}{\hookrightarrow}\C
P^{N} \,.$$

Since the mappings are closed (and therefore proper) and $\C
P^{n}$ is a compact complex manifold, we have the following
characterization of the Milnor classes of $Z$ given by A.
Parusinski and P. Pragacz (see equation (\ref{P})):
\begin{equation}\label{PP}{\cal
M}_{k}(Z)=\sum_{\alpha}\gamma(S_{\alpha})
\left(\sum_{j=0}^{d_{\alpha}-k}(-1)^{j}c_{1}(g^{*}f^{*}{\cal
L})^{j} \cap
(\psi_{\alpha})_{*}c_{k+j}^{SM}(\overline{S}_{\alpha})\right) \in
A_{k}(Z) ,
\end{equation} where $$\gamma(S_{\alpha})=\mu_{S_{\alpha}}-\displaystyle\sum_{S_{\alpha'}\neq
S_{\alpha},\;\overline{S}_{\alpha' }\supset S_{\alpha}
}\gamma(S_{\alpha'})\,,$$ with
$\mu_{S_{\alpha}}=(-1)^{n}\;(\chi(F_{Z,[p]})-1)$ for some $[p] \in
S_{\alpha}$. In fact, equation (\ref{PP}) can be easily obtained
from equation (\ref{P}) by concentrating in degree $k$ in the Chow
group $A_*(Z).$

\vspace{0.2cm} Using  Theorem \ref{3} for
$X=\overline{S}_{\alpha}$ we have that the $k$-th
Schwartz-MacPherson class, $c_{k}^{SM}(\overline{S}_{\alpha})$, is
given by {\small $$\sum_{S_{\beta}\subset
\overline{S}_{\alpha}}\eta(S_{\beta},1_{\overline{S}_{\alpha}})\sum_{i=k+j}^{d_{\beta}}
(-1)^{d_{\beta}-i}
\left( \begin{array}{c}i+1\\
k+j+1\\
\end{array} \right)(i_{\overline{S}_{\beta},\overline{S}_{\alpha}})_{*}\left(c_{1}(\varphi_{\beta}^{*}{\cal L})^{i-k-j} \cap
[\mathbb{P}_{i}(\overline{S}_{\beta})]\right),$$} where
$\varphi_{\beta}=f\circ g\circ \psi_{\alpha} \circ
i_{\overline{S}_{\beta},\overline{S}_{\alpha}}$. Applying this in
equation (\ref{PP}), we have that

 {\small $$\begin{array}{lll}{\cal
M}_{k}(Z)&=&\displaystyle\sum_{\alpha}\gamma(S_{\alpha})
\sum_{j=0}^{d_{\alpha}-k}(-1)^{j}c_{1}(g^{*}f^{*}{\cal L})^{j}
\cap (\psi_{\alpha})_{*}\left(\displaystyle\sum_{S_{\beta}\subset
\overline{S}_{\alpha}}\eta(S_{\beta},1_{\overline{S}_{\alpha}})\right.\\
\vspace{0.3cm} ~&~&\left.\displaystyle\sum_{i=k+j}^{d_{\beta}}
(-1)^{d_{\beta}-i}
\left( \begin{array}{c}i+1\\
k+j+1\\
\end{array} \right)(i_{\overline{S}_{\beta},\overline{S}_{\alpha}})_{*}\left(c_{1}(\varphi_{\beta}^{*}{\cal L})^{i-k-j} \cap
[\mathbb{P}_{i}(\overline{S}_{\beta})]\right)\right).\end{array}$$}

Using:
\begin{itemize}
\item  That for all $\alpha, \beta$ we have $\eta(S_{\beta},1_{\overline{S}_{\alpha}})=0$ if $S_{\beta}\not\subset \overline{S}_{\alpha};$

That for all $\alpha, \beta$ we have $\eta(S_{\beta},1_{\overline{S}_{\alpha}})=0$ if $S_{\beta}\not\subset \overline{S}_{\alpha};$

\item that by a property of the  Euler characteristic we have:
$$\displaystyle\sum_{\beta}\gamma(S_{\beta})\eta(S_{\alpha},1_{\overline{S}_{\beta}})
=\eta\left(S_{\alpha},\displaystyle\sum_{\beta}\gamma(S_{\beta})1_{\overline{S}_{\beta}}\right)\,;$$

 \item and
 that   by  \cite[Lemma
4.1]{Par-Pr}) we have
$\displaystyle\sum_{\beta}\gamma(S_{\beta})1_{\overline{S}_{\beta}}=(-1)^{n}\;w$,
where $w(x)= \chi(F_{x})-1$;
\end{itemize}

\noindent we get: {\small $$\begin{array}{lll}\vspace{0.3cm}{\cal
M}_{k}(Z)&=&(-1)^{n} \sum_{j\geq 0}(-1)^{j}c_{1}(g^{*}f^{*}{\cal
L})^{j} \cap
\sum_{\beta}(-1)^{d_{\beta}}\eta(S_{\beta},w)\\
\vspace{0.3cm} ~&~&\displaystyle\sum_{i\geq
k+j}(-1)^{i} \left( \begin{array}{c}i+1\\
k+j+1\\
\end{array} \right)(\psi_{\alpha})_{*}(i_{\overline{S}_{\beta},\overline{S}_{\alpha}})_{*}\left(c_{1}(\varphi_{\beta}^{*}{\cal L})^{i-k-j} \cap
[\mathbb{P}_{i}(\overline{S}_{\beta})]\right).\end{array}$$}
\noindent That is, {\small $$\begin{array}{lll}\vspace{0.3cm}{\cal
M}_{k}(Z)&=&(-1)^{n}\sum_{j\geq 0}(-1)^{j}c_{1}(g^{*}f^{*}{\cal
L})^{j} \cap
\sum_{\beta}(-1)^{d_{\beta}}\eta(S_{\beta},w)\sum_{i\geq
k+j}(-1)^{i}\\
\vspace{0.3cm}
~&~& \left( \begin{array}{c}i+1\\
k+j+1\\
\end{array} \right)(\psi_{\beta})_{*}\left(c_{1}((\psi_{\beta})^{*}g^{*}f^{*}{\cal L})^{i-k-j} \cap
[\mathbb{P}_{i}(\overline{S}_{\beta})]\right).\end{array}$$}
\noindent Since
$$(\psi_{\beta})_{*}\left(c_{1}((\psi_{\beta})^{*}g^{*}f^{*}{\cal
L})^{i-k-j} \cap [\mathbb{P}_{i}(\overline{S}_{\beta})]\right)=
c_{1}(g^{*}f^{*}{\cal L})^{i-k-j} \cap
(\psi_{\beta})_{*}[\mathbb{P}_{i}(\overline{S}_{\beta})]\,,$$ we
have that
$$\begin{array}{lll}\vspace{0.3cm}{\cal M}_{k}(Z)&=&(-1)^{n}\displaystyle\sum_{j\geq 0}\displaystyle\sum_{i\geq
k+j}(-1)^{i+j}
\left( \begin{array}{c}i+1\\
k+j+1\\
\end{array} \right)
c_{1}(g^{*}f^{*}{\cal L})^{i-k} \; \cap\\
\vspace{0.3cm} ~&~&\cap \;
\displaystyle\sum_{\beta}(-1)^{d_{\beta}}\eta(S_{\beta},w)
(\psi_{\beta})_{*}[\mathbb{P}_{i}(\overline{S}_{\beta})]\\
\vspace{0.2cm} ~&=&(-1)^{n}\displaystyle\sum_{j\geq
0}\displaystyle\sum_{i\geq k+j}(-1)^{i+j}
\left( \begin{array}{c}i+1\\
k+j+1\\
\end{array} \right)
c_{1}(g^{*}f^{*}{\cal L})^{i-k} \cap MP^{\mathbb{P}}_{i}(w)
.\end{array}$$
 By equation (\ref{indice4}), since $L=f^{*}{\cal L}$, we  get:
$${\cal M}_{k}(Z)=(-1)^{n}\sum_{j\geq 0}\sum_{i\geq k+j}(-1)^{i+j}
\left( \begin{array}{c}i+1\\
k+j+1\\
\end{array} \right)
c_{1}(g^{*}L)^{i-k} \cap \left( (-1)^{n}
\Lambda^{\mathbb{P}}_{i}(w) \right).$$ Hence by equation (\ref{5})
we have: {\small $${\cal M}_{k}(Z)=(-1)^{n}\sum_{j\geq
0}\sum_{i\geq k+j}(-1)^{i+j}
\left( \begin{array}{c}i+1\\
k+j+1\\
\end{array} \right)
c_{1}(g^{*}L)^{i-k} \cap \left( (-1)^{n}
(i_{sing})_{*}\left(\Lambda_{i}(Z)\right) \right).$$}

\noindent Therefore, $${\cal M}_{k}(Z)=\sum_{j\geq 0}\sum_{i\geq
k+j}(-1)^{i+j}
\left( \begin{array}{c}i+1\\
k+j+1\\
\end{array} \right)
c_{1}(L|_{Z})^{i-k} \cap (i_{sing})_{*}\Lambda_{i}(Z),$$ as
stated.
\end{proof}

Next we have the following consequence of Lemma \ref{l: main}
which proves the first statement in Theorem \ref{principal}.

\begin{lemma}\label{l: Milnor classes} The following equality
holds in the Chow group of $Z$:
$${\cal M}_{k}(Z)=\displaystyle\sum_{l=0}^{d-k}(-1)^{k+l}
\left( \begin{array}{c}l+k\\
k\\
\end{array}\right)
c_{1}(L|_{Z})^{l} \cap \Lambda_{l+k}(Z) ,$$ where
$d=\dim(Z_{sing}).$
\end{lemma}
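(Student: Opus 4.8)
The plan is to obtain Lemma \ref{l: Milnor classes} from the Main Lemma (Lemma \ref{l: main}) by a purely combinatorial reorganization of the double sum; no further geometric input is required, since Lemma \ref{l: main} already records that every cycle appearing has support in $Z_{\rm sing}$.

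First I would regroup the terms of Lemma \ref{l: main} according to the factor $c_1(L|_Z)^{i-k}\cap\Lambda_i(Z)$. The exponent $i-k$ and the cycle $\Lambda_i(Z)$ depend only on $i$, while $j$ enters only through the binomial coefficient; the constraints $j\geq 0$ and $i\geq k+j$ mean that for each fixed $i\geq k$ the index $j$ runs over $0\leq j\leq i-k$. Thus Lemma \ref{l: main} can be rewritten as
$${\cal M}_k(Z)=\sum_{i\geq k}\left(\sum_{j=0}^{i-k}(-1)^{i+j}\binom{i+1}{k+j+1}\right)c_1(L|_Z)^{i-k}\cap\Lambda_i(Z).$$
The entire content of the lemma is therefore the evaluation of the bracketed coefficient, and I claim it equals $(-1)^i\binom{i}{k}$.

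To prove this identity I would cancel the factor $(-1)^i$ and substitute $r=k+j+1$, rewriting the inner sum as $(-1)^{k+1}\sum_{r=k+1}^{i+1}(-1)^r\binom{i+1}{r}$. Since the complete alternating sum $\sum_{r=0}^{i+1}(-1)^r\binom{i+1}{r}=(1-1)^{i+1}$ vanishes, this tail equals $-(-1)^{k+1}\sum_{r=0}^{k}(-1)^r\binom{i+1}{r}$, and the standard partial-sum identity $\sum_{r=0}^{k}(-1)^r\binom{i+1}{r}=(-1)^k\binom{i}{k}$ then yields exactly $\binom{i}{k}$. Re-indexing by $l=i-k$ converts the displayed formula into
$${\cal M}_k(Z)=\sum_{l\geq 0}(-1)^{l+k}\binom{l+k}{k}c_1(L|_Z)^l\cap\Lambda_{l+k}(Z).$$

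Finally, because the L\^e cycles vanish beyond the dimension of the singular set, $\Lambda_{l+k}(Z)=0$ whenever $l+k>d=\dim(Z_{\rm sing})$, so the sum truncates at $l=d-k$ and produces the stated formula. I expect no genuine obstacle here: the only step carrying content is the binomial identity, which is elementary, and since it is an equality of integer coefficients the whole manipulation is valid verbatim in the Chow group of $Z$.
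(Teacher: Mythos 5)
Your proposal is correct and follows essentially the same route as the paper: both start from the Main Lemma, regroup the double sum by the power of $c_{1}(L|_{Z})$, and reduce everything to the identity $\sum_{j=0}^{l}(-1)^{j}\binom{l+k+1}{j+k+1}=\binom{l+k}{k}$, then truncate using the vanishing of $\Lambda_{i}(Z)$ for $i>\dim Z_{\rm sing}$. The only (immaterial) difference is that you verify the binomial identity by complementing to the full alternating sum $(1-1)^{i+1}=0$ and quoting the partial alternating-sum formula, whereas the paper telescopes directly with Pascal's rule.
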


\begin{proof}
We use the following identity:
\begin{equation}\label{combinatorica}
\left( \begin{array}{c}m\\
n
\end{array}\right)=\left( \begin{array}{c}m-1\\
n-1
\end{array}\right)+\left( \begin{array}{c}m-1\\
n
\end{array}\right) \,.
\end{equation}
We know:
$${\cal M}_{k}(Z)=\sum_{j\geq
0}\sum_{i\geq k+j}(-1)^{i+j}
\left( \begin{array}{c}i+1\\
k+j+1\\
\end{array} \right)
c_{1}(L|_{Z})^{i-k} \cap \Lambda_{i}(Z) .$$ Making $l=i-k$ we get:
$${\cal M}_{k}(Z)=\sum_{j=0}^{d-k}\sum_{l=j}^{d-k}(-1)^{k+j+l}
\left( \begin{array}{c}l+k+1\\
j+k+1\\
\end{array} \right)
c_{1}(L|_{Z})^{l} \cap \Lambda_{l+k}(Z) .$$ Re-arranging this
formula we obtain:
$${\cal M}_{k}(Z)=\sum_{l=0}^{d-k}\left [\sum_{j=0}^{l}(-1)^{k+j+l}
\left( \begin{array}{c}l+k+1\\
j+k+1\\
\end{array} \right)\right ]
c_{1}(L|_{Z})^{l} \cap \Lambda_{l+k}(Z) .$$ Hence we only need to
show:
$$\sum_{j=0}^{l}(-1)^{j}
\left( \begin{array}{c}l+k+1\\
j+k+1
\end{array} \right)=\left( \begin{array}{c}l+k\\
k
\end{array} \right) \,.$$
For this we use equation (\ref{combinatorica}); we get:
$$\begin{array}{lll} \vspace{0.3cm} \displaystyle\sum_{j=0}^{l}(-1)^{j}
\left( \begin{array}{c}l+k+1\\
j+k+1
\end{array} \right) &=& \displaystyle\sum_{j=0}^{l}(-1)^{j}
\left [\left( \begin{array}{c}l+k\\
j+k
\end{array} \right)+\left( \begin{array}{c}l+k\\
j+k+1
\end{array} \right)\right ]\\
\vspace{0.3cm} & = & \displaystyle\sum_{j=0}^{l}(-1)^{j}
\left( \begin{array}{c}l+k\\
j+k
\end{array} \right)+\displaystyle\sum_{j=0}^{l-1}(-1)^{j}\left( \begin{array}{c}l+k\\
j+k+1
\end{array} \right)\\
\vspace{0.3cm} & = & \displaystyle\sum_{j=0}^{l}(-1)^{j}
\left( \begin{array}{c}l+k\\
j+k
\end{array} \right)-\displaystyle\sum_{j=1}^{l}(-1)^{j}\left( \begin{array}{c}l+k\\
j+k
\end{array} \right)\\
\vspace{0.3cm} & = & \left( \begin{array}{c}l+k\\
k
\end{array} \right) \,,
\end{array}$$
which completes the proof of Lemma \ref{l: Milnor classes}.
\end{proof}

In order to prove the second statement in Theorem \ref{principal}
we need to formulate some technical results.

\begin{remark} {\rm Notice that: }
$$\sum_{k=m}^{n}(-1)^{k}
\left( \begin{array}{c}n\\
k\\
\end{array}\right)\left( \begin{array}{c}k\\
m\\
\end{array}\right)=\sum_{k=m}^{n}(-1)^{k}
\left( \begin{array}{c}n\\
n-k+m\\
\end{array}\right)\left( \begin{array}{c}n-k+m\\
m\\
\end{array}\right).$$

\end{remark}

The following lemma is an immediate consequence of the associative
law:
\begin{lemma}\label{somatorios} Let $a_{n,l}$ be elements in an
abelian (additive) group. Then,
$$\sum_{n=q}^{d}\sum_{l=0}^{d-n}a_{n,l}=\sum_{s=q}^{d}\sum_{t=0}^{s-q}a_{s-t,t} \;.$$
\end{lemma}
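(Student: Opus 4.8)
The plan is to observe that both iterated sums are sums of the \emph{same} finite collection of group elements, merely traversed in two different orders, so that the identity is immediate once we invoke commutativity and associativity of the group addition. Concretely, I would first identify the common index set. On the left the bounds $q\le n\le d$ together with $0\le l\le d-n$ describe exactly the triangular set
$$T=\{(n,l): n\ge q,\ l\ge 0,\ n+l\le d\},$$
where I note that $n\le d$ is automatic, being forced by $n+l\le d$ and $l\ge 0$.

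Next I would introduce the substitution $s=n+l$, $t=l$, with inverse $n=s-t$, $l=t$, and check that it sets up a bijection between $T$ and the index set of the right-hand sum. In the forward direction, if $(n,l)\in T$ then $t=l\ge 0$, $s=n+l\le d$, and $s=n+l\ge q$; moreover $t\le s-q$ because $n\ge q$ rewrites as $s-t\ge q$. Conversely, any pair $(s,t)$ with $q\le s\le d$ and $0\le t\le s-q$ yields $n=s-t\ge q$, $l=t\ge 0$, and $n+l=s\le d$, so $(n,l)\in T$. Under this bijection the summand $a_{n,l}$ becomes $a_{s-t,t}$, matching the summand on the right.

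Finally, since the $a_{n,l}$ lie in an abelian group, a finite sum is independent of the order in which its terms are added; hence both sides equal $\sum_{(n,l)\in T}a_{n,l}$, which is the asserted identity. There is no genuine obstacle here: the single point requiring care is verifying that the translated bounds correspond exactly, and in particular that the constraint $s\ge q$ on the right is consistent with (indeed implied by) $n\ge q$ and $l\ge 0$. This bookkeeping is the only thing that could go wrong, and the argument is otherwise immediate.
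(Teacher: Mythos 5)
Your proof is correct, and it fills in exactly the bookkeeping that the paper leaves implicit: the paper offers no written proof, simply declaring the lemma an immediate consequence of the associative (and commutative) law of addition. Your bijection $(n,l)\mapsto(s,t)=(n+l,l)$ between the two triangular index sets is the right verification of that rearrangement, so the two approaches coincide.
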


We  now prove  the  lemma below, which completes the proof of
Theorem \ref{principal}.

\begin{lemma}\label{secondTheorem1} We have the following equality in the Chow group of $Z$:
$${\Lambda}_{k}(Z)=\sum_{l=0}^{d-k}(-1)^{l+k}
\left( \begin{array}{c}l+k\\
k\\
\end{array}\right)
c_{1}(L|_{Z})^{l} \cap {\cal M}_{l+k}(Z) ,$$ where
$d=\dim(Z_{sing}).$
\end{lemma}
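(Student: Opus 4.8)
The plan is to recognise Lemma~\ref{secondTheorem1} as the statement that the (lower-triangular) transformation relating the two families of classes is an \emph{involution}. Writing $c:=c_1(L|_Z)$ for the operator ``cap product with $c_1(L|_Z)$'' and re-indexing Lemma~\ref{l: Milnor classes} by $i=l+k$, that lemma reads
$${\cal M}_{k}(Z)=\sum_{i=k}^{d}(-1)^{i}\binom{i}{k}\,c^{\,i-k}\cap\Lambda_{i}(Z),$$
and what we must prove is the \emph{same} formula with the r\^oles of ${\cal M}$ and $\Lambda$ interchanged. Thus it suffices to substitute one into the other and check that the composite is the identity.

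First I would insert the expression for ${\cal M}_{l+k}(Z)$ furnished by Lemma~\ref{l: Milnor classes} into the right-hand side of the claimed formula. Writing ${\cal M}_{l+k}(Z)=\sum_{m=0}^{d-l-k}(-1)^{l+k+m}\binom{m+l+k}{l+k}c^{m}\cap\Lambda_{m+l+k}(Z)$ and cancelling the signs via $(-1)^{l+k}(-1)^{l+k+m}=(-1)^{m}$, the right-hand side becomes the double sum
$$\sum_{l=0}^{d-k}\sum_{m=0}^{d-l-k}(-1)^{m}\binom{l+k}{k}\binom{m+l+k}{l+k}\,c^{\,l+m}\cap\Lambda_{m+l+k}(Z).$$

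Next I would collect the terms by the total exponent $p:=l+m$ of $c$ (equivalently by the index $p+k$ of the L\^e cycle); this regrouping is exactly the content of Lemma~\ref{somatorios}, applied with $q=0$ and $d$ replaced by $d-k$. The coefficient of $c^{\,p}\cap\Lambda_{p+k}(Z)$ is then the purely combinatorial sum $\sum_{l=0}^{p}(-1)^{p-l}\binom{l+k}{k}\binom{p+k}{l+k}$, so the whole proof reduces to showing that this equals the Kronecker delta $\delta_{p,0}$. Substituting $j=l+k$ brings it, up to the sign $(-1)^{p+k}$, into the form $\sum_{j=k}^{p+k}(-1)^{j}\binom{p+k}{j}\binom{j}{k}$ appearing in the Remark preceding Lemma~\ref{somatorios}.

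Finally I would evaluate this last sum by the ``subset of a subset'' identity $\binom{p+k}{j}\binom{j}{k}=\binom{p+k}{k}\binom{p}{j-k}$, which pulls out $\binom{p+k}{k}$ and leaves $\binom{p+k}{k}(-1)^{k}\sum_{j=k}^{p+k}(-1)^{\,j-k}\binom{p}{j-k}=\binom{p+k}{k}(-1)^{k}(1-1)^{p}$ by the binomial theorem. Since $(1-1)^{p}$ vanishes for $p\geq 1$ and equals $1$ for $p=0$, only the term $p=0$ survives, with coefficient $\binom{k}{k}=1$; tracing back the signs yields exactly $\Lambda_{k}(Z)$. I expect the only real difficulty to be bookkeeping rather than conceptual: keeping the summation ranges consistent through the re-indexing so that the appeal to Lemma~\ref{somatorios} is legitimate, and tracking the alternating signs carefully, since a single sign error would destroy the collapse to $\delta_{p,0}$.
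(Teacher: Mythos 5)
Your proposal is correct and follows essentially the same route as the paper: both substitute the expansion of ${\cal M}_{l+k}(Z)$ from Lemma~\ref{l: Milnor classes} into the claimed formula, regroup the double sum by the total power of $c_{1}(L|_{Z})$ via Lemma~\ref{somatorios}, and collapse the resulting coefficient to a Kronecker delta through the alternating binomial orthogonality identity. The only cosmetic differences are that you prove that identity directly from $\binom{p+k}{j}\binom{j}{k}=\binom{p+k}{k}\binom{p}{j-k}$ and the binomial theorem, whereas the paper invokes \cite[Lemma (2.6)]{Verma}, and that the paper runs the computation for $\Lambda_{d-k}(Z)$ rather than $\Lambda_{k}(Z)$.
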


\begin{proof}
We will prove that we have:
$${\Lambda}_{d-k}(Z)=\sum_{l=0}^{k}(-1)^{d-k+l}
\left( \begin{array}{c}d-k+l\\
d-k\\
\end{array}\right)
c_{1}(L|_{Z})^{l} \cap {\cal M}_{d-k+l}(Z),$$ which implies the
lemma. Let  $$A_{d,k}=\sum_{l=0}^{k}(-1)^{d-k+l}
\left( \begin{array}{c}d-k+l\\
d-k\\
\end{array}\right)
c_{1}(L|_{Z})^{l} \cap {\cal M}_{d-k+l}(Z).$$ Then, making
$l=k-j$, we obtain:
$$A_{d,k}=\sum_{j=0}^{k}(-1)^{d-j}
\left( \begin{array}{c}d-j\\
d-k\\
\end{array}\right) \,.
c_{1}(L|_{Z})^{k-j} \cap {\cal M}_{d-j}(Z) \,.
$$
Using Lemma \ref{l: Milnor classes} we obtain:
$$A_{d,k}=\sum_{j=0}^{k}(-1)^{d-j}
\left( \begin{array}{c}d-j\\
d-k\\
\end{array}\right)
c_{1}(L|_{Z})^{k-j} \;\cap \qquad \qquad \qquad \qquad$$
 $$\qquad  \qquad  \cap \left [\sum_{l=0}^{j}(-1)^{d-j+l}
\left( \begin{array}{c}d-j+l\\
d-j\\
\end{array}\right)
c_{1}(L|_{Z})^{l} \cap \Lambda_{d-j+l}(Z) \right ].$$ Therefore
$$A_{d,k}=\sum_{j=0}^{k}\sum_{l=0}^{j}(-1)^{l}\left( \begin{array}{c}d-j+l\\
d-j\\
\end{array}\right)
\left( \begin{array}{c}d-j\\
d-k\\
\end{array}\right)
c_{1}(L|_{Z})^{k+l-j} \cap \Lambda_{d-j+l}(Z).$$ We now set
$d-j=n$, so  we get:
$$A_{d,k}=\sum_{n=d-k}^{d}\sum_{l=0}^{d-n}(-1)^{l}\left( \begin{array}{c}n+l\\
n\\
\end{array}\right)
\left( \begin{array}{c}n\\
d-k\\
\end{array}\right)
c_{1}(L|_{Z})^{k+l-d+n} \cap \Lambda_{n+l}(Z).$$ Then Lemma
\ref{somatorios} implies:
$$A_{d,k}=\sum_{s=d-k}^{d}\sum_{t=0}^{s-d+k}(-1)^{t}\left( \begin{array}{c}s\\
s-t\\
\end{array}\right)
\left( \begin{array}{c}s-t\\
d-k\\
\end{array}\right)
c_{1}(L|_{Z})^{s-d+k} \cap \Lambda_{s}(Z).$$ That is,
$$A_{d,k}=\sum_{s=d-k}^{d}(-1)^s\left [\sum_{t=0}^{s-d+k}(-1)^{s-t}\left( \begin{array}{c}s\\
s-t\\
\end{array}\right)
\left( \begin{array}{c}s-t\\
d-k\\
\end{array}\right)\right ]
c_{1}(L|_{Z})^{s-d+k} \cap\; \Lambda_{s}(Z).$$ Making $t=u-d+k$ we
obtain that $A_{d,k}$ equals the sum: {\small
$$ \sum_{s=d-k}^{d}(-1)^s\left [\sum_{u=d-k}^{s}(-1)^{s-u+d-k}\left( \begin{array}{c}s\\
s-u+d-k\\
\end{array}\right)
\left( \begin{array}{c}s-u+d-k\\
d-k\\
\end{array}\right)\right ]
c_{1}(L)^{s-d+k} \cap \;\Lambda_{s}(Z).$$}
 Hence the previous remark
implies:
$$A_{d,k}=\sum_{s=d-k}^{d}(-1)^s\left [\sum_{v=d-k}^{s}(-1)^{v}\left( \begin{array}{c}s\\
v\\
\end{array}\right)
\left( \begin{array}{c}v\\
d-k\\
\end{array}\right)\right ]
c_{1}(L|_{Z})^{s-d+k} \cap \Lambda_{s}(Z) \,.$$ So, using
\cite[Lemma (2.6)]{Verma}, we get :
$$A_{d,k}=\sum_{s=d-k}^{d}(-1)^s(-1)^{d-k}{\delta}_{s,d-k}\;
c_{1}(L|_{Z})^{s-d+k} \cap \Lambda_{s}(Z).$$ where
${\delta}_{m,n}$ as before denotes the  Kronecker  delta.
Therefore
$$A_{d,k} =\Lambda_{d-k}(Z),$$ as stated.
\end{proof}

\begin{remark}\label{Brasselet} It follows from the above proof  that one has the following
relations between the Milnor classes and polar cycles:
$$\begin{array}{rcl}\vspace{0.3cm}{\cal M}_{k}(Z)&=&\displaystyle\sum_{l=0}^{d-k}(-1)^{n+k+l}
\left( \begin{array}{c}l+k\\
k\\
\end{array}\right)
c_{1}(L)^{l}\; \cap
\displaystyle\sum_{\alpha}(-1)^{d_{\alpha}}\eta(S_{\alpha},w)
[\mathbb{P}_{l+k}(\overline{S}_{\alpha})],\end{array}$$ answering
in this way a question raised by J.-P. Brasselet in \cite {BraC}.
\end{remark}


In a more general context in which $X$ is embedded as a closed
analytic subspace of a smooth complex
space $M$, P. Aluffi in \cite{Aluffi1} introduces a cycle
$\alpha_{X}$ in the Chow group, called later the Aluffi class,
which is related to the Donaldson-Thomas type invariants described
by K. Behrend in \cite{Beh}.

In the case we envisage here  $X$ is the singular set $Z_{sing}$
of a hypersurface $Z$ in a $n+1$-dimensional manifold $M$. We have
that $\alpha_{Z_{sing}}=c^{SM}(\nu_{Z_{sing}})$, with
$\nu_{Z_{sing}}(p)=(-1)^{n+1}(1-\chi(F_{p}))$, for all $p \in
Z_{sing}$. Note that $\nu_{Z_{sing}}=(-1)^{n}w$, as defined in the
Corollary \ref{local1}.

\begin{corollary}\label{c: Aluffi}
The Aluffi class $\alpha_{Z_{sing}}$ of $Z_{sing}$ relates to the
L\^e cycles as follows:
$$(\alpha_{Z_{sing}})_{k}=\sum_{l\geq
0}\sum_{m= 0}^{d-l-k}(-1)^{m+l+k}
{\small \left( \!\!\begin{array}{c}m+l+k\\
l+k\\
\end{array}\!\! \right)}
c_{1}(L)^{m+l} \cap \Lambda_{m+l+k}(Z).$$ Hence the Aluffi classes
relate  to the polar varieties as follows:
$$(\alpha_{Z_{sing}})_{k}=\sum_{l\geq
0}\sum_{m= 0}^{d-l-k}(-1)^{n+m+l+k}
{\small \left( \!\!\begin{array}{c}m+l+k\\
l+k\\
\end{array}\!\! \right)}
c_{1}(L)^{m+l} \; \cap $$ $$ \qquad   \cap \left(
\displaystyle\sum_{\alpha}(-1)^{d_{\alpha}}\eta(S_{\alpha},w)
[\mathbb{P}_{m+l+k}(\overline{S}_{\alpha})]\right).$$

\end{corollary}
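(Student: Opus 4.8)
The plan is to first express the Aluffi class as a Chern–Schwartz–MacPherson class and tie it to the total Milnor class, and then to feed in the description of the Milnor classes through Lê cycles already obtained in Lemma \ref{l: Milnor classes}. By the identity $\nu_{Z_{sing}}=(-1)^{n}w$ recorded just above the statement and the linearity of the Schwartz–MacPherson transformation, $\alpha_{Z_{sing}}=c^{SM}(\nu_{Z_{sing}})=(-1)^{n}c^{SM}(w)$. The conceptually central step is then to recognise $\alpha_{Z_{sing}}$ as a $c(L|_{Z})$-twist of ${\cal M}(Z)$. Recalling the identity $\sum_{S}\gamma_{S}\,1_{\overline S}=(-1)^{n}w$ from \cite[Lemma 4.1]{Par-Pr} (the same one used in the Main Lemma \ref{l: main}), applying $c^{SM}$ and using its functoriality under the proper inclusions $i_{\overline S,Z}$ gives $\alpha_{Z_{sing}}=\sum_{S}\gamma_{S}(i_{\overline S,Z})_{*}c^{SM}(\overline S)$. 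Comparing this with the Parusinski–Pragacz expression (\ref{P}), where exactly the same combination occurs capped with $c(L|_{Z})^{-1}$, isolates the clean relation ${\cal M}(Z)=c(L|_{Z})^{-1}\cap\alpha_{Z_{sing}}$.

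With this relation fixed, I would extract the degree-$k$ component, writing $(\alpha_{Z_{sing}})_{k}$ as a sum of terms of the shape $c_{1}(L|_{Z})^{l}\cap{\cal M}_{l+k}(Z)$ (with the range and signs on $l$ dictated by the twist, since $L$ is a line bundle so $c(L|_{Z})^{\pm1}=\sum_{l\ge0}(\pm1)^{l}c_{1}(L|_{Z})^{l}$), and then substitute the formula of Lemma \ref{l: Milnor classes} for each ${\cal M}_{l+k}(Z)$. Collecting equal powers of $c_{1}(L|_{Z})$ and equal indices of $\Lambda_{\bullet}(Z)$ leaves a purely combinatorial sum of binomial coefficients, which is handled by the Pascal relation (\ref{combinatorica}) together with the double-sum bookkeeping already carried out in the proofs of Lemmas \ref{l: Milnor classes} and \ref{secondTheorem1}; this yields the stated first formula. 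For the second formula I would replace each $\Lambda_{m+l+k}(Z)$ by its polar description: by (\ref{5}) and (\ref{indice4}) one has $(i_{sing})_{*}\Lambda_{i}(Z)=\Lambda^{\mathbb P}_{i}(w)=(-1)^{n}MP^{\mathbb P}_{i}(w)$, and $MP^{\mathbb P}_{i}(w)$ is by definition (\ref{GlobalCycle}) the cycle $\sum_{\alpha}(-1)^{d_{\alpha}}\eta(S_{\alpha},w)[\mathbb P_{i}(\overline S_{\alpha})]$; the extra factor $(-1)^{n}$ merges with the sign $(-1)^{m+l+k}$ to give $(-1)^{n+m+l+k}$, exactly as claimed. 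As an independent cross-check one can bypass the Milnor class altogether: applying Theorem \ref{3} directly to $\beta=w$ expresses $c^{SM}_{k}(w)$ through the projective MacPherson cycles $MP^{\mathbb P}_{i}(w)$, and Proposition \ref{p: Massey cycle} converts these into the $\Lambda_{i}(Z)$.

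The step I expect to be delicate is the identification in the first paragraph of the correct $c(L|_{Z})$-twist relating $\alpha_{Z_{sing}}$ and ${\cal M}(Z)$: the entire coefficient structure of the corollary is governed by whether one caps with $c(L|_{Z})$ or with its inverse and by the alternating signs in the expansion of $c(L|_{Z})^{\pm1}$, so this must be pinned down with care against (\ref{P}) and the definition $\alpha_{Z_{sing}}=c^{SM}(\nu_{Z_{sing}})$ before any binomial manipulation is attempted. Once that twist is correctly fixed, the remaining work is the routine re-indexing of double sums of binomial coefficients, of precisely the type already dispatched in Lemmas \ref{l: Milnor classes} and \ref{secondTheorem1}, so no genuinely new difficulty should arise there.
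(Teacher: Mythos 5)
Your proposal is correct and follows essentially the same route as the paper: establish $\alpha_{Z_{sing}}=\sum_{S}\gamma_{S}(i_{\overline{S},Z})_{*}c^{SM}(\overline{S})$ via \cite[Lemma 4.1]{Par-Pr}, compare with the Parusi\'nski--Pragacz formula (\ref{P}) to get $\alpha_{Z_{sing}}=c(L|_{Z})\cap{\cal M}(Z)$, and then substitute the L\^e-cycle expression for the Milnor classes from Theorem \ref{principal} (resp.\ the polar expression from Remark \ref{Brasselet}, which is exactly your unwinding of (\ref{5}), (\ref{indice4}) and (\ref{GlobalCycle})). Your derivation of the intermediate identity from $\sum_S\gamma_S 1_{\overline S}=(-1)^n w$ and naturality of $c^{SM}$ is just a slightly more explicit justification of the paper's equation (\ref{A}).
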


\begin{proof} Given ${\cal S}=\{S\}$ a Whitney
stratification of $Z$ such that $Z_{sing}$ is a union of strata,
using  \cite[Lemma 4.1]{Par-Pr} we have that
\begin{equation}\label{A}\alpha_{Z_{sing}}=\sum_{S \in {\cal
S}}\;\gamma_{S}\;(i_{\overline{S},Z})_{*}c^{SM}(\overline{S}),\end{equation}
where $\gamma_{S}=(\nu_{Z_{sing}})_{S} - \displaystyle\sum_{S'
\neq S,\;\overline{S'} \supset S} \gamma_{S{'}}$ and
$i_{\overline{S},Z}:\overline{S}\hookrightarrow Z$ is the
inclusion. Hence by the description of the Milnor classes given by
A. Parusinski and P. Pragacz in \cite{Par-Pr} (see also equation
(\ref{P})), we have that

\begin{equation}\label{AluffiClass}\alpha_{Z_{sing}}=c(L_{|_{Z}}) \cap {\cal M}(Z).\end{equation}
 Thus the result
follows by the Theorem \ref{principal}. The second expression
follows similarly using the description of the Milnor classes via
polar varieties as in  Remark \ref{Brasselet}.
\end{proof}

We note that  in \cite{Aluffi1} the author relates the Aluffi
classes to the Milnor classes, and  formula (\ref{AluffiClass})
already appears in that article.  Yet, the proof we give here is
shorter and more direct (see
 \cite[Theorem 1.2]{Aluffi1}).

\section{On the top dimensional Milnor class}

Let ${\cal S}=\{S_{\alpha}\}$ be a Whitney stratification of $M$
with connected strata, adapted to $Z$ and to $Z_{\rm sing}$. Set
$d=\dim(Z_{\rm sing})$
 and assume that all $d$-dimensional
strata of $\cal S$ which lie in $Z_{\rm sing}$ are contained in
the non-singular locus of $Z_{\rm sing}.$ Notice that this
condition is not automatic unless we demand that the
stratification be adapted also to the singular set of $Z_{\rm
sing}$. Let $S_{\alpha}$ be a $d$-dimensional stratum contained in
$Z_{\rm sing}$,  let  $x$ be a point in $S_{\alpha}$,  so it is a
regular point of $ Z_{\rm sing},$ and let $H_x$ be a local
submanifold of $M$, biholomorphic to a disc of complex codimension
$d$, transversal at $x$ to the stratum $S_{\alpha}$. Then $H_x
\cap Z$ is a hypersurface in $H_x$ with an
isolated singularity at $x$. 

\begin{definition}
Let $S_{\alpha}, x$ and $H_x$ be as before. The {\it transversal
Milnor number} of $S_{\alpha}$ at $x$ is the usual Milnor number
at $x$ of the isolated hypersurface germ $H_x \cap Z$. We denote
this number by $\mu^{\perp}(S_{\alpha},x)$.
\end{definition}

 The local topological triviality of Whitney stratifications implies
 that the transversal Milnor number $\mu^{\perp}(S_{\alpha},x)$ is constant
 along $S_{\alpha}$,
 and of course independent of the choice of the transversal slice $H_x$.
 Hence we denote this number just by $\mu^{\perp}(S_{\alpha})$ and call it the
 {\it transversal Milnor number of} $S_{\alpha}$. Notice that
 the number $\mu^{\perp}(S_{\alpha})$ is
 defined only for  $d$-dimensional strata  contained
 in the regular part of $Z_{\rm sing},$ where $d=\dim(Z_{\rm sing}).$  One has:

 \begin{lemma} \label{Lt}
 Let  $x \in Z_{\rm sing}$ be a regular point of the
singular set which belongs to a $d$-dimensional stratum $S_x$ of
$Z_{\rm sing}$. Set $\mu(Z,x)= (-1)^{n}\;(\chi(F_{x})-1) \,,$
where $F_x$ is a local Milnor fibre. Then the transversal Milnor
number $\mu^{\perp}(S_x)$ equals, up to sign, the  local Milnor
number
 $\mu(Z,x)$ of $Z$ at $x$. More precisely:
$$\mu(Z,x)=(-1)^{d}\mu^{\perp}(S_{x}).$$

 \end{lemma}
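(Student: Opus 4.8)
The plan is to express both numbers as Euler characteristics of Milnor fibres and then to compare them through a local product structure along the stratum. First I would choose a local equation $h\colon(M,x)\to(\C,0)$ for $Z$ near $x$, so that $F_x$ is the Milnor fibre of $h$ and, by definition, $\mu(Z,x)=(-1)^{n}(\chi(F_x)-1)$. Because $x$ is a regular point of $Z_{\rm sing}$ lying in the $d$-dimensional stratum $S_x$, and $H_x$ is a disc transverse to $S_x$ of complex codimension $d$, the restriction $g:=h|_{H_x}$ defines the isolated hypersurface singularity $H_x\cap Z$ inside the $(n+1-d)$-dimensional manifold $H_x$; its Milnor fibre $F_x^{\perp}$ is, by construction, the one computing the transversal Milnor number $\mu^{\perp}(S_x)$.

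The core of the argument is the claim that the Milnor fibration of $h$ is locally trivial along $S_x$, in the strong sense that, after shrinking, there is a homeomorphism of germs identifying $(M,h)$ with $(\C^{d}\times H_x,\;g\circ\mathrm{pr}_2)$. Granting this, the Milnor fibre of $h$ splits as $F_x\cong D\times F_x^{\perp}$, where $D$ is a contractible polydisc coming from the $\C^{d}$-factor; multiplicativity of the Euler characteristic then yields $\chi(F_x)=\chi(D)\,\chi(F_x^{\perp})=\chi(F_x^{\perp})$.

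It remains to compute $\chi(F_x^{\perp})$ and to track signs. Since $H_x\cap Z$ is an isolated hypersurface singularity of complex dimension $n-d$, Milnor's theorem presents $F_x^{\perp}$ as a wedge of $\mu^{\perp}(S_x)$ spheres of real dimension $n-d$, so $\chi(F_x^{\perp})=1+(-1)^{n-d}\mu^{\perp}(S_x)$. Combining this with the previous step gives $\mu(Z,x)=(-1)^{n}(\chi(F_x)-1)=(-1)^{n}(\chi(F_x^{\perp})-1)=(-1)^{n}(-1)^{n-d}\mu^{\perp}(S_x)=(-1)^{2n-d}\mu^{\perp}(S_x)=(-1)^{d}\mu^{\perp}(S_x)$, which is the asserted formula.

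The one delicate point — and the step I expect to be the main obstacle — is the local triviality of $h$ along $S_x$ used above: Whitney regularity alone trivializes the pair $(M,Z)$ along $S_x$ but need not trivialize the function $h$ itself. To obtain triviality of $h$ I would first refine the given Whitney stratification to one satisfying the Thom $a_h$ condition (such a refinement exists by Hironaka's theorem) and then invoke the Thom--Mather first isotopy lemma applied to $h$ on a tube around $S_x$. Such a refinement only subdivides $S_x$ into strata that are still $d$-dimensional near $x$, and it affects neither $\mu(Z,x)$ nor $\mu^{\perp}(S_x)$, since both are intrinsic to the germ of $Z$ at $x$ and constant along $S_x$; hence working with the refined stratification is harmless and completes the proof.
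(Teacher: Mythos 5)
Your proposal is correct and follows essentially the same route as the paper: both arguments reduce $\chi(F_x)$ to $\chi$ of the transversal Milnor fibre via a local product structure along $S_x$ justified by the Thom condition for holomorphic functions (Hironaka) together with topological triviality, then apply Milnor's bouquet theorem to the isolated singularity $H_x\cap Z$ and track the signs identically. The one point you flag as delicate — that Whitney regularity alone does not trivialize $h$ — is exactly the point the paper handles by citing Hironaka's theorem, so your treatment matches theirs.
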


 \begin{proof}
 By definition we have that
$$\mu^{\perp}(S_{x})=\mu(H_{x}\cap Z,x)=\mu((s_{j})_{|_{H_{x}}},x),$$
where  $s_j$ is the restriction $s|_{U_j}$ of $s$ to a   local
chart  $U_j$ around $x$ in $Z,$ and $ H_{x} $ is as above, a local
submanifold of $M$, biholomorphic to a disc of complex codimension
$d$, transversal at $x$ to the stratum $S_x$.

Since $ S_{x} $ and $Z_{\rm sing}$ have the same dimension, $
H_{x} $ is transversal at $x$ to $ S_x$. Using the fact that every
Whitney stratified set is topologically locally trivial, we have
that there exists a local homeomorphism $(Z,x)\rightarrow
(S_{x},x) \times (N,x)$, with $N=H_{x} \cap Z$. Moreover, as every
holomorphic function satisfies the Thom condition (see
\cite[Section 5, Corollary 1]{Hironaka}), we conclude that
$$\chi(F_{s_{j},x})=\chi(F_{(s_{j})_{|_{H_{x}}},x})=1+(-1)^{n-d}\mu((s_{j})_{|_{H_{x}}},x),$$
where the last equality follows by the Milnor's Fibration Theorem.
Thus
$$\mu(Z,x)=(-1)^{n}\;(\chi(F_{s_{j},x})-1)=(-1)^{d}\;\mu((s_{j})_{|_{H_{x}}},x)=(-1)^{d}\mu^{\perp}(S_{x}).$$
\end{proof}

\begin{remark} \label{topMilnor} Let $S$ be a $d$-dimensional stratum
of a Whitney stratification of $M$ adapted to $Z,$ contained in
the non-singular locus of $Z_{\rm sing}$. We define the top
dimensional L\^e number $\lambda^{d}_{S}$ of $S$ by
$\lambda^{d}_{S}:=\lambda^{d}_{s_i,x},$ where
$\lambda^{d}_{s_i,x}$ is the Massey L\^e number of $s_i$ at $x,$
 for any  $x \in S\cap U_i$ and with $U_i$
a local chart around $x$ in $Z$ and $s_i$ the restriction
$s|_{U_i}$. Then, the transversal Milnor number $\mu^{\perp}(S)$
is equal to the top dimesional L\^e number $\lambda^{d}_{S}$. In
fact, since we may assume that $S$ is reduced, irreducible and
that for a generic point $x \in S$ we have that the multiplicity
of $S$ along $x$ is one, the claim follows directly by the
description of the top dimensional L\^e number
$\lambda^d_{s_{i}}(x)$ given by D. Massey in \cite[Proposition
2.8]{Massey0} (see also \cite[p. 20-21]{Massey}).
\end{remark}

Our last result is the corollary stated in the introduction:

\begin{corollary}
Assume that $L$ is a very ample line bundle on $M$. Then,

\begin{equation}\label{topMilnorFormula} {\cal M}_{d}(Z) \,
=\displaystyle\sum_{\begin{array}{c} S_\alpha \subseteq Z _{\rm
sing}\\{\rm dim}\;
S_{\alpha=d}\end{array}}\!\!\!(-1)^{d}\mu^\perp(S_\alpha)\;
[\overline{S}_{\alpha}] \, =\displaystyle\sum_{\begin{array}{c}
S_\alpha \subseteq Z _{\rm sing}\\{\rm dim}\;
S_{\alpha=d}\end{array}}\!\!\! (-1)^{d}\lambda^{d}_{S_\alpha}\;
[\overline{S}_{\alpha}]=(-1)^d\Lambda_d(Z),\end{equation}
where $\mu^\perp(S_\alpha)$
is the transversal Milnor number of $S_\alpha$ and
$\lambda^{d}_{S_\alpha}$ is the $d$-th L\^e number of $S_\alpha$.
This equality is as classes in the Chow group of $Z$ and hence
also in the singular homology group of $Z$.
\end{corollary}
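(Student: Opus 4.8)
The plan is to read off the chain of equalities from three ingredients already in hand: Theorem~\ref{principal} gives the rightmost identity, the Parusinski--Pragacz formula (\ref{P}) together with Lemma~\ref{Lt} gives the leftmost one, and Remark~\ref{topMilnor} bridges the two middle expressions. I would begin with the rightmost equality ${\cal M}_{d}(Z)=(-1)^d\Lambda_d(Z)$, which is immediate: specialising the first formula of Theorem~\ref{principal} to $k=d$ collapses the sum to the single term $l=0$, and since $\binom{d}{d}=1$ and $c_1(L|_Z)^0=1$ one is left with exactly $(-1)^d\Lambda_d(Z)$.

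Next, for the expression in terms of transversal Milnor numbers, I would extract the degree-$d$ component of (\ref{P}). Writing $c(L|_Z)^{-1}=1-c_1(L|_Z)+\cdots$, the factor $c_1(L|_Z)^{j}$ lowers degree by $j$, so to land in degree $d$ one needs the degree-$(d+j)$ part of $(i_{\overline{S},Z})_{*}c^{SM}(\overline{S})$. For any stratum $S\subseteq Z_{\rm sing}$ one has $\dim\overline{S}\le d$, which forces $j=0$ and $\dim S=d$; the top (regular) strata of $Z$ carry $\gamma_S=0$ and so drop out. Hence only the $d$-dimensional strata inside $Z_{\rm sing}$ survive, each contributing $\gamma_{S_\alpha}\,c^{SM}_{d}(\overline{S}_\alpha)=\gamma_{S_\alpha}\,[\overline{S}_\alpha]$, because the top Schwartz--MacPherson class of a $d$-dimensional variety is its fundamental class. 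It then remains to evaluate $\gamma_{S_\alpha}$: since every stratum strictly containing $S_\alpha$ in its closure is a regular stratum of $Z$ with vanishing $\gamma$, the inductive definition collapses to $\gamma_{S_\alpha}=\mu_{S_\alpha}$, and Lemma~\ref{Lt} identifies $\mu_{S_\alpha}=\mu(Z,x)=(-1)^d\mu^\perp(S_\alpha)$. This yields the first displayed equality, and the equality with the L\^e-number expression follows by the substitution $\mu^\perp(S_\alpha)=\lambda^{d}_{S_\alpha}$ supplied by Remark~\ref{topMilnor}.

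The main obstacle I anticipate is the bookkeeping in the degree-$d$ extraction of the second step: one must check carefully that the lower-dimensional strata of $Z_{\rm sing}$ and the regular strata of $Z$ genuinely contribute nothing after capping with $c(L|_Z)^{-1}$, and that the inductive coefficient $\gamma_{S_\alpha}$ really reduces to $\mu_{S_\alpha}$ for the top strata. Once these vanishings are confirmed, the remaining assertions---that ${\cal M}_d(Z)=(-1)^d\Lambda_d(Z)$ and that $\mu^\perp(S_\alpha)=\lambda^{d}_{S_\alpha}$---are direct appeals to Theorem~\ref{principal} and Remark~\ref{topMilnor} respectively, and the equalities in the Chow group pass to singular homology because $M$ is compact.
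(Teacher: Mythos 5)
Your argument is correct, but the route to the first equality differs from the paper's. Both proofs obtain ${\cal M}_{d}(Z)=(-1)^d\Lambda_d(Z)$ by specialising Theorem~\ref{principal} to $k=d$, exactly as you do. For the identification with $\sum(-1)^d\mu^\perp(S_\alpha)[\overline{S}_\alpha]$, however, the paper stays on the L\^e-cycle side: it writes $\Lambda_d(Z)$ as the Massey cycle $\sum_\alpha m_\alpha(\psi_\alpha)_*[\mathbb{P}_d(\overline{S}_\alpha)]$, observes that only the $d$-dimensional strata contribute (their top polar variety being the stratum closure itself), and then evaluates the coefficient $m_\alpha=(-1)^{n-d}\chi(\phi_{s_j|_N}F^\bullet|_N)_x$ via Massey's computation of vanishing cycles on a transversal slice, concluding $m_\alpha=\mu(s_j|_N)=\mu^\perp(S_\alpha)$. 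You instead extract the dimension-$d$ component directly from the Parusi\'nski--Pragacz formula~(\ref{P}), using that $\gamma_S$ vanishes on regular strata, that $c^{SM}_{d+j}(\overline{S})=0$ for $j>0$ when $\dim\overline{S}\le d$, that the top Schwartz--MacPherson class is the fundamental class, and Lemma~\ref{Lt} to convert $\mu_{S_\alpha}$ into $(-1)^d\mu^\perp(S_\alpha)$. Your version buys independence from the Massey-cycle machinery and from the vanishing-cycle lemma of Massey that the paper invokes, at the price of the $\gamma$-bookkeeping you flag (which does close up: every stratum $S'$ with $\overline{S'}\supset S_\alpha$ and $S'\neq S_\alpha$ has dimension exceeding $d$, hence lies in $Z_{\rm reg}$ and carries $\gamma_{S'}=0$ by downward induction). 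One point worth making explicit in your write-up is the standing hypothesis of this section that the $d$-dimensional strata of $Z_{\rm sing}$ lie in the non-singular locus of $Z_{\rm sing}$, which is needed for $\mu^\perp(S_\alpha)$ and Lemma~\ref{Lt} to apply; the final passage $\mu^\perp(S_\alpha)=\lambda^d_{S_\alpha}$ via Remark~\ref{topMilnor} is the same in both proofs.
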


\begin{proof}
Applying  Theorem \ref{principal} in the case $k = \dim
Z_{sing}=d$, we obtain:
$${\cal M}_{d}(Z)=(-1)^{d} \Lambda_{s}^{d},$$
where $\Lambda_{s}^{d}$ can be described by:
$$\sum_{\alpha}m_{\alpha}
(\psi_{\alpha})_{*}[\mathbb{P}_{d}(\overline{S}_{\alpha})]=
\displaystyle\sum_{{\rm dim}\;
S_\alpha=d}\;m_{\alpha}(\psi_{\alpha})_{*}[\overline{S}_{\alpha}]\,,$$
where $m_{\alpha}:=(-1)^{d-d_{\alpha}-1}
\chi(\phi_{{s_{j}}_{|_{N}}}F^{\bullet}_{|_{N}})_{x}$ with
$d_{\alpha}=\dim S_{\alpha}$ and $N$ is a local submanifold of $M$
of complex codimension $d_{\alpha}$, transversal at $x$ to
$S_{\alpha}$.

For $x$ in the top dimensional stratum $S_{\alpha}$, by
\cite[Lemma 4.13]{MasseyDuke} we have:
$$(-1)^{n+1-d-1}\chi(\phi_{{s_{j}}_{|_{N}}}F^{\bullet}_{|_{N}})_{x}=
\mu({s_{j}}_{|_{N}})\chi(F^{\bullet}_{|_{N}})_{x}=(-1)^{n+1-d}\mu({s_{j}}_{|_{N}}).$$
Hence $m_{\alpha}=\mu({s_{j}}_{|_{N}})=\mu^\perp(S_\alpha),$ which
proves the first equality in equation (\ref{topMilnorFormula}).
The second equality follows directly by Remark \ref{topMilnor}.
\end{proof}

\vspace{0.5cm}

{\bf Roberto Callejas-Bedregal}

 Centro de Ci\^encias Exatas e da Natureza - Campus I ,

 Universidade Federal da Para\'{i}ba-UFPb,

 Cidade Universit\'aria s/n Castelo Branco, Jo\~ao Pessoa, PB - Brasil.

{email: roberto@mat.ufpb.br}\\

\vspace{0.1cm}{\bf Michelle F. Z. Morgado}

Instituto de Bioci\^encias Letras e  Ci\^encias Exatas,

Universidade Estadual Paulista-UNESP,

Rua Crist\'ov\~ao Colombo, 2265, Jd. Nazareth, S. J. do Rio Preto,
SP - Brasil

{email: mfzmorgado@hotmail.com}\\

\vspace{0.1cm}{\bf Jos\'e Seade}

Instituto de Matem\'aticas, Unidad Cuernavaca,

Universidad Nacional Aut\'onoma de M\'exico-UNAM,

Av. Universidad s/n, Lomas de Chamilpa, C.P. 62210,

Cuernavaca, Morelos, M\'exico.

{email: jseade@matcuer.unam.mx}

\vspace{1cm}
\section{ L\^e cycles and Milnor classes: Erratum}

 In \cite {CMS} we introduced  a concept of global L\^e cycles of  singular hypersurfaces in compact complex manifolds, and we gave a formula (\cite[Theorem 1]{CMS}) relating these with the Milnor classes of the corresponding hypersurface.
   There are two subtle errors in \cite{CMS}, that we correct below. With this, the correct statement of Theorem 1 in \cite{CMS} is the following.

\begin{Thm} \label{principal}
Let $M$ be a compact complex manifold and $L$ a  very ample line
bundle on $M$. Consider the complex
analytic space $Z$ of zeroes  of a reduced non-zero holomorphic
section $s$ of $L$ and denote by $Z_{sing}$ the singular set of
$Z$.  Then the local L\^e cycles {\rm (introduced by Massey \cite{Massey})} at the points in $Z$ can be patched together giving rise to well defined classes $\Lambda_{k}(Z)$  of $Z$, $k= 0,\dots, r =\dim(Z_{sing})$,  in the Chow group of $Z$, that we call the global L\^e classes of $Z$, and these are related to the Milnor classes
 ${\cal M}_{k}(Z)$  by the formulas:
$${\cal M}_{k}(Z)= (-1)^{k}\displaystyle\sum_{s=0}^{r-k} C_{k,s}(d)\; c_{1}(L|_Z)^{s} \cap \Lambda_{k+s}(Z)\,$$
and conversely:
$$\Lambda_{k}(Z)= (-1)^{k}\displaystyle\sum_{s=0}^{r-k} B_{k,s}(d)\; c_{1}(L|_Z)^{s} \cap {\cal M}_{k+s}(Z)\,,$$
where
$d=\sqrt[n]{\deg_{L}[Z]}$, with $\deg_{L}[Z]=\displaystyle\int c_1(L)^{n-1} \cap [Z]$, and
$C_{k,s}(d)$ and $B_{k,s}(d)$ are rational functions on $d$  given by:
$$C_{k,s}(d)=\displaystyle\sum_{l=0}^{s}(-1)^{l}\left( \begin{array}{c}l+k-1\\
l \\
\end{array}\right)
 \left(\displaystyle\frac{(d-1)^{s+1-l}+(-1)^{s-l}}{d^{s+1}}\right)$$
 and
 $$B_{k,s}(d)=\displaystyle\sum_{l=0}^{s}(-1)^{s-l}\left( \begin{array}{c}k+s\\
k+l \\
\end{array}\right)
\left(\begin{array}{c}k+1+l\\
k+1 \\
\end{array}\right)
 \left(\displaystyle\frac{d-1}{d}\right)^{l}.$$
\end{Thm}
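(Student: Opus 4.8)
The plan is to re-run the derivation of the original Main Lemma (Lemma \ref{l: main}), keeping every formal step intact but replacing the single geometric input that was incorrect. As before I would start from the Parusi\'nski--Pragacz expression (\ref{P}) for ${\cal M}(Z)$, concentrate it in degree $k$, and substitute the projective polar expansion of the Schwartz--MacPherson classes supplied by Theorem \ref{3}, applied to each stratum closure $\overline{S}_\alpha$. After collecting the normal Morse indices into the single constructible function $w(p)=\chi(F_{s,p})-1$ exactly as in the original proof, this rewrites ${\cal M}_k(Z)$ as a $c_1(L|_Z)$-weighted combination of the projective MacPherson cycles $MP^{\mathbb P}_i(w)$, equivalently (up to the sign in (\ref{indice4})) of the projective Massey cycles $\Lambda^{\mathbb P}_i(w)$, with the binomial coefficients $\binom{i+1}{k+j+1}$ unchanged.

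The one step that must be corrected is the passage from $\Lambda^{\mathbb P}_i(w)$ to the global L\^e class $\Lambda_i(Z)$, formerly asserted as the equality (\ref{5}). That assertion fails: the class $\Lambda_i(Z)$ is the patching of Massey's genuinely local L\^e cycles, cut out by \emph{affine}-generic linear forms, whereas $\Lambda^{\mathbb P}_i(w)$ is assembled from \emph{projective} polar classes $[\mathbb P_i(\overline{S}_\alpha)]$, cut out by generic hyperplanes of $\C P^N$. The closure of an affine polar variety and the projective polar variety differ by a component at the hyperplane at infinity, whose class is a smooth, Chern--Mather type polar contribution governed by the degree of the embedding. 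I would compute this excess stratum by stratum and show that, once the ambient degree is recorded through $\deg_L[Z]=\int_Z c_1(L|_Z)^n$ and normalized as $d=\sqrt[n]{\deg_L[Z]}$, it depends on the data only through $d$; the characteristic $(d-1)$-factors are precisely of the type produced by the polar degrees of a smooth hypersurface. This yields a corrected, unitriangular comparison expressing $\Lambda^{\mathbb P}_i(w)$ as a $c_1(L|_Z)$-weighted sum of the $\Lambda_{i+t}(Z)$ with coefficients polynomial in $(d-1)/d$, and feeding it into the binomial sum above produces the first formula, with the rational functions $C_{k,s}(d)$.

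With the first formula established, the converse is purely algebraic. The system expressing the ${\cal M}_k(Z)$ in terms of the $\Lambda_{k+s}(Z)$ runs only over indices $\geq k$, capped with nonnegative powers of $c_1(L|_Z)$, so it is unitriangular and invertible; the coefficients $B_{k,s}(d)$ are then forced by the requirement that the two substitutions compose to the identity. I would verify the two displayed closed forms by the telescoping technique of Lemmas \ref{l: Milnor classes} and \ref{secondTheorem1}: reindex the double sums, apply the Pascal relation (\ref{combinatorica}) together with \cite[Lemma (2.6)]{Verma} to collapse the alternating inner sums to Kronecker deltas, and track the surviving powers of $(d-1)/d$ and $d^{-(s+1)}$. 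The bookkeeping is heavier than in the original because of these degree weights, but once the inversion is arranged as a triangular recursion it is a routine, if lengthy, verification.

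The step I expect to be the genuine obstacle is the corrected comparison of the second paragraph: isolating the excess of the projective polar class over the closed affine L\^e cycle, identifying it exactly as a degree-dependent smooth polar term, and---most delicately---proving that this correction is \emph{stratum-independent}, so that it can be absorbed into universal rational functions of the single invariant $d$ rather than into separate degrees for each $\overline{S}_\alpha$. This uniformity is precisely what the original argument took for granted in writing (\ref{5}) as an equality, and establishing it rigorously, through a careful analysis of the contributions at the hyperplane at infinity, is where the real content of the correction lies.
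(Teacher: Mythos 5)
Your outline of the formal skeleton (Parusi\'nski--Pragacz, then a polar expansion of the Schwartz--MacPherson classes, then a triangular inversion) does match the paper's, but the step you yourself single out as ``where the real content of the correction lies'' rests on a misdiagnosis, and that is a genuine gap. You locate the failure of equation (\ref{5}) in an alleged excess component of the projective polar variety over the closure of the affine one, sitting at the hyperplane at infinity. No such excess exists: for a generic hyperplane $H$ one has $\mathbb{P}_{k}(X)\cap(\mathbb{P}^{N}\setminus H)=P_{k}(X\cap(\mathbb{P}^{N}\setminus H))$, i.e.\ the projective polar variety \emph{is} the closure of the affine one (this is asserted in the remark following Definition \ref{indice2} and is not among the retracted statements). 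The two actual errors lie elsewhere. First, Piene's formula holds only for polar varieties computed relative to the given embedding into $\mathbb{P}^{N}$; since the relevant embedding is the one defined by $L$ (for a degree-$d$ projective hypersurface, a $d$-fold Veronese), the polar cycles entering the Main Lemma must be the \emph{abstract} ones $[P^{\mbox{abs}}_{i}(\overline{S}_{\alpha})]$ pulled back from $\phi(\overline{S}_{\alpha})$, and this re-embedding --- via $c_{1}(\mathcal{O}_{\mathbb{P}^{N}}(1))=\tfrac{1}{d}\,c_{1}(L)$ in the projective case, Remark \ref{Rem} --- is the source of every factor of $d$ and $d-1$ in $C_{k,s}(d)$ and $B_{k,s}(d)$, not a cycle at infinity. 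Second, the Segre-class object originally called $\Lambda_{k}(Z)$ simply is not a patching of Massey's local L\^e cycles, so the existence of well-defined global classes restricting to the local ones is itself part of what the corrected theorem asserts; it is secured by \emph{redefining} $\Lambda_{k}(Z)$ through Definition \ref{globalLe} as an explicit $\left(\tfrac{d-1}{d}\right)^{j}$-weighted combination of abstract MacPherson cycles. Your proposal treats ``the patching of Massey's genuinely local L\^e cycles'' as an already given class in the Chow group, which begs the first assertion of the theorem.

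A secondary but telling symptom: a mechanism that counts components at infinity could only produce corrections governed by integer degrees of the strata or of $Z$, whereas the theorem's $d=\sqrt[n]{\deg_{L}[Z]}$ is in general irrational (which is precisely why the erratum places the global L\^e classes in $H_{*}(Z,\mathbb{R})$). That $n$-th root cannot arise from your proposed excess computation; it arises from forcing the general definition of $\Lambda_{k}(Z)$ to mimic the inverted relation of Remark \ref{Rem}. The triangular-inversion portion of your plan is sound and is essentially what the paper does, but fed with the wrong intermediate objects it would not yield the stated $C_{k,s}(d)$ and $B_{k,s}(d)$.
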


     We notice too that
   the Corollary following \cite[Theorem 1]{CMS} remains unchanged with minors obvious adaptations in the proof. Also, \cite[Theorem 2]{CMS} (see  \cite[Theorem 4.7]{CMS}) remains almost the same, we only need to change the polar and the MacPherson classes for the corresponding abstract classes and its proof have to have the same kind of modifications.
   \medskip

 The concept of global L\^e cycles of $Z$ that we gave in \cite{CMS} was
motivated by the Gaffney-Gassler interpretation in \cite{GG} of
 the local L\^e cycles of holomorphic map-germs introduced by David Massey in \cite{Massey}.
 These were  defined  as the Segre classes of $Z_{sing}$ in $M$, and we wrongly proved a theorem relating these to the Milnor classes of $Z$. Our proof used a characterization of the  L\^e cycles as  Massey (or MacPherson) cycles,  and that was the interpretation of the L\^e cycles that we actually used to prove our theorem. The first error in \cite{CMS} was precisely the identification of these Segre classes with the Massey, or MacPherson, cycles (equation (14) in \cite[Proposition 4.6]{CMS}), which is essential.
  Thence we must define the  global L\^e cycles differently, as indicated below.

  The second error concerns the use of Piene's formula relating Mather and Polar classes (see \cite{Piene}). That formula is valid only for closed subvarieties of projective spaces and not for closed embeddings into projective spaces as we used. Again, we can go round this problem as indicated below.

We are grateful to Paolo Aluffi for several useful comments and pointing out to us that our main theorem in \cite{CMS} could not be truth as stated because it  lead to contradictions.

\medskip
We now indicate  the corrections one needs to make on \cite{CMS}. For simplicity consider first the projective case.
Let  $X$ be a $D$-dimensional subvariety of  $\mathbb{P}^N$.  Its $k$-th polar variety is  \cite[Definition 4.2]{CMS}:
\begin{equation}\label{polar_definition} \mathbb{P}_{k}(X)=\overline{\{ x \in X_{reg} \; | \; {\rm dim} \left(T_{x}X_{reg}\cap L_{k+2}\right) \geq D-k-1 \}},\end{equation}
where $L_{k+2}$ is a sufficiently general  plane of  codimension ${k+2}$ in $\mathbb{P}^{N}$  and $T_xX_{reg}$ is the projective tangent space of $X$ at a regular point $x$.
For any given constructible function $\beta$ on $X\subseteq \mathbb{P}^{N}$, with respect to a Whitney stratification
${\cal S}=\{S_{\alpha}\}$ of $X$, the MacPherson cycles are  \cite[Equation (12)]{CMS}:
$$MP^{\mathbb{P}}_{k}(\beta):=\displaystyle\sum_{\alpha}(-1)^{{\rm
dim}\;S_{\alpha}}\eta(S_{\alpha},\beta)[\mathbb{P}_{k}(\overline{S}_{\alpha})],$$
where $ \eta(S_{\alpha},\beta)$ is the usual Normal Morse index (as in \cite[Definition 2.1]{CMS}).
Similarly, for every $F^{\bullet}$ in ${\cal D}^{b}_{c}(X)$,
the derived category of bounded, constructible complexes
of sheaves of $\C$-vector spaces on $X$,
the Massey cycles of $F^{\bullet}$  are  \cite[Definition 4.4]{CMS}:
$$\Lambda^{\mathbb{P}}_{k}(F^{\bullet})=\sum_{\alpha} m_{\alpha}
(\psi_{\alpha})_{*}[\mathbb{P}_{k}(\overline{S}_{\alpha})], $$
where $\psi_{\alpha}:\overline{S}_{\alpha}\hookrightarrow X$ is
the inclusion and
$m_{\alpha}=(-1)^{d-d_{\alpha}-1}\chi(\phi_{g_{|_{N}}}F_{|_{N}}^{\bullet})_{p}$\,;
 $g$ is a non-degenerate covector at $p \in S_\alpha$ with
respect to the fixed stratification ${\cal S}=\{S_\alpha\}$ and $N$ is a germ of a closed complex submanifold
which is transversal to $S_{\alpha}$ with $N \cap
S_{\alpha}=\{p\}$. When $\beta(p) =\chi(F^{\bullet})_{p}$
for $F^{\bullet}\in {\cal D}^{b}_{{\cal S}}(X)$ we denote
$\Lambda^{\mathbb{P}}_{k}(F^{\bullet})$ also by
$\Lambda^{\mathbb{P}}_{k}(\beta).$
By \cite[Proposition 4.6 (13)]{CMS} we have $$\;\Lambda^{\mathbb{P}}_{k}(F^{\bullet})=(-1)^{\dim X}MP^{\mathbb{P}}_{k}(F^{\bullet}).$$

The following result expresses each Schwartz-MacPherson class through MacPherson classes. The statement and the proof are as  in \cite[Theorem 4.7]{CMS}.

\begin{theorem} [{\bf projective case}] \label{Schwartz-MacPherson-projective}Let $X\subset \mathbb{P}^{N}$ be a subvariety endowed with a Whitney stratification with connected strata $S_{\alpha}$.
 If $\beta:X\rightarrow \Z$ is a constructible function with respect to this stratification and ${\cal L}:={\cal O}_{\mathbb{P}^{N}}(1)$, then
$$c_{k}^{SM}(\beta)=\sum_{\alpha}\eta(S_{\alpha},\beta)\sum_{i=k}^{d_{\alpha}}(-1)^{d_{\alpha}-i}
\left( \begin{array}{c}i+1\\
k+1\\
\end{array} \right)c_{1}({\cal L}|_X)^{i-k} \cap
[\mathbb{P}_{i}(\overline{S}_{\alpha})]$$
$$=\sum_{i\geq k}(-1)^{i}
\left( \begin{array}{c}i+1\\
k+1\\
\end{array} \right)c_{1}({\cal L}|_X)^{i-k} \cap
MP^{\mathbb{P}}_{i}(\beta). \qquad \;$$
\end{theorem}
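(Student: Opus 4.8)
The plan is to rerun the argument of the original Theorem \ref{3} (that is, \cite[Theorem 4.7]{CMS}), the decisive change being that $X$ and every stratum closure $\overline{S}_\alpha$ are now genuine closed subvarieties of $\mathbb{P}^N$. This is exactly the hypothesis the earlier version lacked: R.~Piene's characterization of the Mather classes in polar terms \cite{Piene} is valid for closed subvarieties of projective space, not for abstract varieties carried into $\mathbb{P}^N$ by a closed immersion, so in the present formulation it may be applied verbatim to each $\overline{S}_\alpha$. The three ingredients are Piene's formula, the expression (\ref{SMclass}) for the dual Schwartz--MacPherson class through the dual Mather classes of the strata closures, and the duality relations between these classes and their duals.

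First I would apply Piene's theorem to each $\overline{S}_\alpha$, of dimension $d_\alpha$, obtaining
$$c_{k}^{Ma}(\overline{S}_\alpha)=\sum_{i=k}^{d_\alpha}(-1)^{d_\alpha-i}\binom{i+1}{k+1}\,c_{1}({\cal L}|_{\overline{S}_\alpha})^{i-k}\cap[\mathbb{P}_{i}(\overline{S}_\alpha)],$$
with ${\cal L}={\cal O}_{\mathbb{P}^N}(1)$ and $\mathbb{P}_i(\overline{S}_\alpha)$ the polar variety of (\ref{polar_definition}). Substituting this into (\ref{SMclass}) and using $\check{c}_{k}^{SM}(\beta)=(-1)^{k}c_{k}^{SM}(\beta)$ together with $\check{c}_{k}^{Ma}(\overline{S}_\alpha)=(-1)^{d_\alpha-k}c_{k}^{Ma}(\overline{S}_\alpha)$, the two sign factors $(-1)^{d_\alpha}$ and $(-1)^{d_\alpha-k}$ collapse to $(-1)^{k}$, which cancels the sign relating $c^{SM}$ to $\check{c}^{SM}$; after pulling $c_1({\cal L}|_X)^{i-k}$ out of the pushforward $(i_{\overline{S}_\alpha,X})_*$ by the projection formula, this gives the first displayed equality.

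Finally I would regroup the double sum, fixing $i\ge k$ and collecting all strata with $d_\alpha\ge i$. Writing $(-1)^{d_\alpha-i}=(-1)^{i}(-1)^{d_\alpha}$ and recognising $\sum_\alpha(-1)^{d_\alpha}\eta(S_\alpha,\beta)[\mathbb{P}_i(\overline{S}_\alpha)]$ as the projective MacPherson cycle $MP^{\mathbb{P}}_i(\beta)$ yields the second equality. The step demanding the most care is not the (routine) combinatorics but the legitimate invocation of Piene's formula: one must check that each $\overline{S}_\alpha$ is a reduced, pure $d_\alpha$-dimensional subvariety of $\mathbb{P}^N$ and that ${\cal L}|_{\overline{S}_\alpha}$ is the hyperplane bundle appearing in Piene's setup, since it was precisely the neglect of this point---applying Piene to a closed immersion rather than to a subvariety---that produced the error the present erratum corrects.
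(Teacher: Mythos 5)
Your argument is correct and is essentially the paper's own proof: both apply Piene's polar formula to each stratum closure $\overline{S}_\alpha$ (now a genuine closed subvariety of $\mathbb{P}^N$, which is exactly what legitimizes the application), substitute into the expression (\ref{SMclass}) for $\check{c}_k^{SM}(\beta)$ in terms of the dual Mather classes, and cancel the signs via $\check{c}_k^{SM}=(-1)^k c_k^{SM}$ and $\check{c}_k^{Ma}(\overline{S}_\alpha)=(-1)^{d_\alpha-k}c_k^{Ma}(\overline{S}_\alpha)$, before regrouping by $i$ to recognize $MP^{\mathbb{P}}_i(\beta)$. Your closing emphasis on verifying Piene's hypotheses is precisely the point the erratum is making, so nothing is missing.
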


\medskip

Now let $Z$ be a hypersurface in  $\mathbb{P}^{N}$ defined by a homogeneous polynomial $H$ of degree $d$, that is, $Z$
is the set of zeroes of a holomorphic section of ${\cal O}_{\mathbb{P}^{N}}(d)$. Denote by ${\cal M}_{k}(Z)$ the $k$-th Milnor class of $Z$ as defined in \cite[Definition 5.1]{CMS}.

\medskip

We may define the projective L\^e cycles of $Z$, $\Lambda_{k}(Z)$,  via \cite[Equation (13)]{CMS}:
\begin{equation}\label{Lecycle_definition}\Lambda_{k}(Z)= \Lambda^{\mathbb{P}}_{k}(\omega)=(-1)^{n-1}MP^{\mathbb{P}}_{k}(\omega),\end{equation} where $\omega(x)= \chi(F_{h,x})-1$ with $h$ being the restriction of $H$ to a neighborhood of $x$. As observed in \cite[Corollary 2.4]{CMS} and \cite[Remark 4.5]{CMS}, we have that $\Lambda_{k}(Z)$ restricted to any chart in an affine open covering of $\mathbb{P}^{N}$  coincides with the local L\^e cycle defined by Massey in \cite{Massey}.

With this definition of projective L\^e cycles,  Lemma 6.1 in \cite{CMS} becomes the lemma below (with essentially the same proof):

 \begin{lemma}[{\bf Main Lemma in the projective case}] \label{l: main} If $Z \subseteq \mathbb{P}^{N}$ then
$${\cal M}_{k}(Z)= (-1)^{n-1} \sum_{j\geq
0}\sum_{i\geq k+j}(-1)^{i+j}
\left( \begin{array}{c}i+1\\
k+j+1\\
\end{array} \right)
c_{1}(L|_Z)^{j} c_{1}({\cal L}|_Z)^{i-k-j} \cap MP^{\mathbb{P}}_{i}(\omega) .$$
and $${\cal M}_{k}(Z)=  \sum_{j\geq
0}\sum_{i\geq k+j}(-1)^{i+j}
\left( \begin{array}{c}i+1\\
k+j+1\\
\end{array} \right)
c_{1}(L|_Z)^{j} c_{1}({\cal L}|_Z)^{i-k-j} \cap \Lambda_{i}(Z) .$$

\end{lemma}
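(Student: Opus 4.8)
The plan is to run exactly the argument that produced the Main Lemma in \cite{CMS}, with one essential modification: the two line bundles $L={\cal O}_{\mathbb{P}^N}(d)$ and ${\cal L}={\cal O}_{\mathbb{P}^N}(1)$ must now be carried separately (in the very ample situation of \cite{CMS} one had $L=f^*{\cal L}$, and the two collapsed into a single bundle, which is exactly what made the earlier statement false here). First I would start from the Parusinski--Pragacz description (\ref{P}) of the total Milnor class, concentrate it in degree $k$, and expand the inverse total Chern class of $L$ as $c(L|_Z)^{-1}=\sum_{j\geq 0}(-1)^j c_1(L|_Z)^j$. This produces the projective analogue of equation (\ref{PP}),
$${\cal M}_{k}(Z)=\sum_{\alpha}\gamma(S_{\alpha})\sum_{j=0}^{d_{\alpha}-k}(-1)^{j}\,c_{1}(L|_Z)^{j}\cap(\psi_{\alpha})_{*}c_{k+j}^{SM}(\overline{S}_{\alpha}).$$

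Next I would substitute into each $c_{k+j}^{SM}(\overline{S}_{\alpha})$ the expression supplied by Theorem \ref{Schwartz-MacPherson-projective}, applied with $X=\overline{S}_{\alpha}$ and $\beta=1_{\overline{S}_{\alpha}}$, which rewrites it as a signed sum over substrata $S_{\beta}\subseteq\overline{S}_{\alpha}$ of the polar classes $[\mathbb{P}_i(\overline{S}_{\beta})]$ capped with powers of $c_1({\cal L})$. The remaining bookkeeping is purely formal and identical to \cite{CMS}: I would use the projection formula to move the Chern classes through the proper pushforwards $(\psi_{\alpha})_*(i_{\overline{S}_{\beta},\overline{S}_{\alpha}})_*$, use $\eta(S_{\beta},1_{\overline{S}_{\alpha}})=0$ for $S_{\beta}\not\subseteq\overline{S}_{\alpha}$ to free the inner summation, invoke additivity of the normal Morse index in its constructible-function slot, and apply \cite[Lemma 4.1]{Par-Pr} in the form $\sum_{\beta}\gamma(S_{\beta})1_{\overline{S}_{\beta}}=(-1)^n\omega$ to absorb the $\gamma$-weights. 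Tracking the signs (those built into Corollary \ref{local1} and the shift it carries, together with the $(-1)^n$ of Lemma 4.1), the surviving sum $\sum_{\beta}(-1)^{d_{\beta}}\eta(S_{\beta},\omega)(\psi_{\beta})_*[\mathbb{P}_i(\overline{S}_{\beta})]$ is by definition the projective MacPherson cycle $MP^{\mathbb{P}}_i(\omega)$, and reindexing by the dimension $i$ of the polar class yields the first displayed formula with its overall factor $(-1)^{n-1}$. The second formula is then immediate: by the definition (\ref{Lecycle_definition}) of the projective L\^e cycles one has $\Lambda_i(Z)=(-1)^{n-1}MP^{\mathbb{P}}_i(\omega)$, equivalently $MP^{\mathbb{P}}_i(\omega)=(-1)^{n-1}\Lambda_i(Z)$, and substituting this into the first formula cancels the overall sign against itself.

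The step I expect to be the main obstacle is not any individual computation but the discipline of keeping $L$ and ${\cal L}$ apart, which is precisely the point the two errors in \cite{CMS} obscured: the factor $c_1(L|_Z)^{j}$ originates in the Parusinski--Pragacz normalization $c(L|_Z)^{-1}$, whereas $c_1({\cal L}|_Z)^{i-k-j}$ originates in Theorem \ref{Schwartz-MacPherson-projective}, and since $\deg L=d\neq 1=\deg{\cal L}$ these powers cannot be merged into a single $c_1(L|_Z)^{i-k}$. Relatedly, Theorem \ref{Schwartz-MacPherson-projective} may be applied to the closures $\overline{S}_{\alpha}$ only because each is a closed \emph{subvariety} of $\mathbb{P}^N$, which is the exact hypothesis under which Piene's characterization of the Mather classes holds; this is what legitimizes the whole substitution. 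Once these two points are respected, the sign tracking and the combinatorial rearrangement are routine, and the proof reduces to the verification already carried out in \cite{CMS}.
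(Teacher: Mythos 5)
Your proposal follows the paper's own route exactly: the Erratum proves this lemma by rerunning the argument of the original Main Lemma (the Parusinski--Pragacz formula concentrated in degree $k$ with $c(L|_Z)^{-1}$ expanded, substitution of the polar-variety expression for the Schwartz--MacPherson classes of the $\overline{S}_\alpha$, the vanishing and additivity properties of the normal Morse index, \cite[Lemma 4.1]{Par-Pr}, the projection formula, and the identification of the resulting sum with $MP^{\mathbb{P}}_i(\omega)$ and hence with $\Lambda_i(Z)$ via (\ref{Lecycle_definition})), with the sole modification of keeping $L={\cal O}_{\mathbb{P}^N}(d)$ and ${\cal L}={\cal O}_{\mathbb{P}^N}(1)$ distinct, which is precisely the point you isolate. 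One small bookkeeping remark: in the Erratum's convention $\dim Z=n-1$, so \cite[Lemma 4.1]{Par-Pr} should be invoked in the form $\sum_\beta\gamma(S_\beta)1_{\overline{S}_\beta}=(-1)^{n-1}\omega$ rather than $(-1)^{n}\omega$ as you wrote; this is where the overall factor $(-1)^{n-1}$ that you correctly report in the final formula actually originates.
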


This leads to the first equation of Theorem \ref{principal}  for projective hypersurfaces.

\begin{example} {\rm Let $Z$ be the hypersurface of $\mathbb{P}^4$ defined by the homogeneous polynomial $H(x_0,\dots, x_4)=x_0 x_1.$ Let $U_i$ be the open chart of $\mathbb{P}^4$ given by $\{x_i\neq 0\}.$ Then $Z\cap U_i$ is defined by $h_i=x_0 x_1$ if $i=2,3,4$ and by $h_0=x_1; h_1=x_0.$  Let us compute the local L\^e cycles of $h_i$ for each $i=0,\dots, 4.$  For example, if $i=4$, then $\Sigma (h_4)=V(x_0,x_1),$ which is smooth. Then we have only local L\^e cycles in dimension $2$ given by $\Lambda_2(h_4)=V(x_0,x_1)$ and the other are all zero. The same argument works for $i=2,3.$ For $i=0,1$ we have that $\Sigma (h_i)=\emptyset,$ hence there are no local L\^e cycles for this case. Therefore, we have that $\Lambda_2(Z)=[\mathbb{P}^2]$ and the other are all zero.

If we compute the Milnor classes of $Z$ using the formula given in Lemma~\ref{l: main}, we will get that
${\cal M}_{k}(Z)=[\mathbb{P}^k]$ for $k=0,1,2.$

}
\end{example}

We now consider the general case.
Let $M$ be a compact complex manifold of dimension $n$, $L$ a  very ample holomorphic  line bundle  over $M$
and  $Z$  the complex analytic space of zeroes  of a reduced non-zero holomorphic
section $s$ of $L$. Denote by $Z_{sing}$ the singular set of
$Z$. One has a
closed embedding $\phi: M\hookrightarrow \mathbb{P}^{N}$ defined by the very ample line bundle $L$.
Using the notation ${\cal L}_N:={\cal O}_{\mathbb{P}^{N}}(1)$, for any $Y\subset M$ we have that $(\phi|_{Y})^{*}({\cal L}_N|_{\phi(Y)})=L|_{Y}$.

\begin{definition} Given  $Y \subseteq M$ of pure dimension $D$, its {\it abstract polar variety} is: $$P^{\mbox{abs}}_k(Y) :=(\phi|_{Y})^{-1}) \mathbb{P}_{k}(\phi(Y)), \;k=0, \dots, D,$$
where $\mathbb{P}_{k}(\phi(Y))$ is the projective polar variety  defined by equation (\ref{polar_definition}).
We have that $[P^{\mbox{abs}}_k(Y)] =((\phi|_{Y})^{-1})_{*} [\mathbb{P}_{k}(\phi(Y))]$ as cycles.
\end{definition}

\begin{definition}
For any given constructible function $\beta$ on $Y\subseteq M$, with respect to a Whitney stratification
$\{S_{\alpha}\}$ of $Y$,
define the {\it abstract MacPherson cycle} by:
$$MP^{\mbox{abs}}_{k}(\beta):=\displaystyle\sum_{\alpha}(-1)^{{\rm
dim}\;S_{\alpha}}\eta(S_{\alpha},\beta)[P^{\mbox{abs}}_{k}(\overline{S}_{\alpha})].$$

\end{definition}

We remark that although the abstract MacPherson cycles are defined similarly to the MacPherson cycles in the projective case, they can differ even for projective hypersurfaces.

Since $\phi: Y\rightarrow \phi (Y)$ is an isomorphism we have that
$$c_{k}^{Ma} (Y) = ((\phi|_{Y})^{-1})_{*} c_{k}^{Ma} (\phi(Y)).$$

The next  theorem is analogous to  \cite[Theorem 2]{CMS}.
This is in fact the correct statement of \cite[Theorem 4.7]{CMS} in the general setting. We leave the details to the reader.

\begin{theorem} [{\bf general case}] \label{3}Let $Y\subset M$ be an n-dimensional subvariety endowed with a Whitney stratification with connected strata $S_{\alpha}$.
If $\beta:Y\rightarrow \Z$ is a constructible function with
respect to this stratification, then
$$c_{k}^{SM}(\beta)=\sum_{\alpha}\eta(S_{\alpha},\beta)\sum_{i=k}^{d_{\alpha}}(-1)^{d_{\alpha}-i}
\left( \begin{array}{c}i+1\\
k+1\\
\end{array} \right)c_{1}(L|_Z)^{i-k} \cap
[P^{\mbox{abs}}_{i}(\overline{S}_{\alpha})]$$
$$=\sum_{i\geq k}(-1)^{i}
\left( \begin{array}{c}i+1\\
k+1\\
\end{array} \right)c_{1}(L|_Z)^{i-k} \cap
MP^{\mbox{abs}}_{i}(\beta).  \qquad \;$$
\end{theorem}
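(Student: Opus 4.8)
The plan is to reduce the statement to the projective case already established in Theorem \ref{Schwartz-MacPherson-projective}, transporting that formula back to $Y$ along the isomorphism $\phi|_{Y}\colon Y\xrightarrow{\sim}\phi(Y)$ induced by the closed embedding $\phi\colon M\hookrightarrow\mathbb{P}^{N}$. First I would set $X:=\phi(Y)\subset\mathbb{P}^{N}$, equip it with the Whitney stratification $\{\phi(S_{\alpha})\}$ (the Whitney conditions are preserved under biholomorphisms, and $\phi(\overline{S}_{\alpha})=\overline{\phi(S_{\alpha})}$ since $\phi$ is proper, so $d_{\alpha}=\dim S_{\alpha}=\dim\phi(S_{\alpha})$), and push the constructible function forward to $\tilde\beta:=(\phi|_{Y})_{*}\beta=\beta\circ(\phi|_{Y})^{-1}$. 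Applying Theorem \ref{Schwartz-MacPherson-projective} to the pair $(X,\tilde\beta)$, with $\mathcal{L}_{N}=\mathcal{O}_{\mathbb{P}^{N}}(1)$, gives
$$c_{k}^{SM}(\tilde\beta)=\sum_{\alpha}\eta(\phi(S_{\alpha}),\tilde\beta)\sum_{i=k}^{d_{\alpha}}(-1)^{d_{\alpha}-i}\binom{i+1}{k+1}\,c_{1}(\mathcal{L}_{N}|_{X})^{i-k}\cap[\mathbb{P}_{i}(\phi(\overline{S}_{\alpha}))].$$

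The second step is to apply the proper pushforward $((\phi|_{Y})^{-1})_{*}$ to both sides. On the left, functoriality of MacPherson's transformation (\cite{MacP}) under the isomorphism $\phi|_{Y}$ gives $c_{k}^{SM}(\tilde\beta)=(\phi|_{Y})_{*}c_{k}^{SM}(\beta)$, hence $((\phi|_{Y})^{-1})_{*}c_{k}^{SM}(\tilde\beta)=c_{k}^{SM}(\beta)$. On the right I would treat each summand with the projection formula, using $(\phi|_{Y})^{*}(\mathcal{L}_{N}|_{X})=L|_{Y}$ together with the definition $[P^{\mathrm{abs}}_{i}(\overline{S}_{\alpha})]=((\phi|_{Y})^{-1})_{*}[\mathbb{P}_{i}(\phi(\overline{S}_{\alpha}))]$, to obtain
$$((\phi|_{Y})^{-1})_{*}\!\left(c_{1}(\mathcal{L}_{N}|_{X})^{i-k}\cap[\mathbb{P}_{i}(\phi(\overline{S}_{\alpha}))]\right)=c_{1}(L|_{Y})^{i-k}\cap[P^{\mathrm{abs}}_{i}(\overline{S}_{\alpha})].$$
The binomial coefficients and signs are scalars and pass through untouched, so this yields the first displayed equality of the theorem. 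The second equality, written through the abstract MacPherson cycle $MP^{\mathrm{abs}}_{i}(\beta)$, then follows exactly as in the projective case by interchanging the order of summation over $\alpha$ and $i$ and absorbing the factor $(-1)^{\dim S_{\alpha}}\eta(S_{\alpha},\beta)$ into the definition of $MP^{\mathrm{abs}}_{i}(\beta)$.

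The one point that deserves care — and the main obstacle — is the identity $\eta(\phi(S_{\alpha}),\tilde\beta)=\eta(S_{\alpha},\beta)$ invoked in the first step. I would justify it from the fact that $\phi$ is a closed embedding, hence a biholomorphism onto its image near every point: the transversal slice $N$, the complex link, and the whole normal Morse datum of Definition \ref{constructible} are carried isomorphically from $S_{\alpha}$ to $\phi(S_{\alpha})$, and since the normal Morse index is defined purely through the relative Euler characteristic of this datum with coefficients in the corresponding constructible data, and $\tilde\beta=\beta\circ(\phi|_{Y})^{-1}$, the two indices agree. Granting this invariance, the argument uses nothing beyond the projective case, the functoriality of $c^{SM}$, and the projection formula, so no computation beyond \cite[Theorem 4.7]{CMS} is required.
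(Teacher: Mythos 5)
Your proposal is correct and follows exactly the route the paper intends: the erratum sets up the abstract polar varieties as preimages under $\phi|_{Y}$, records $(\phi|_{Y})^{*}({\cal L}_N|_{\phi(Y)})=L|_{Y}$ and the transport of Mather classes under the isomorphism, states that the theorem is the projective case carried over, and then "leaves the details to the reader" --- your reduction via Theorem \ref{Schwartz-MacPherson-projective}, the projection formula, and the invariance of the normal Morse index is precisely that omitted argument. The only cosmetic point is that the $c_{1}(L|_{Z})$ appearing in the statement should be read as $c_{1}(L|_{Y})$, as you do.
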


As in \cite{CMS}, this yields to
the following lemma. The first  equality is  analogous to \cite[Lemma 6.1]{CMS}, and hence to Lemma \ref{l: main}. The second equality corresponds to \cite[Lemma 6.2]{CMS}.

\begin{lemma}[{\bf Main Lemma for the general case}]\label{l: Milnor classes}
$${\cal M}_{k}(Z)= (-1)^{n-1} \sum_{j\geq
0}\sum_{i\geq k+j}(-1)^{i+j}
\left( \begin{array}{c}i+1\\
k+j+1\\
\end{array} \right)
c_{1}(L|_Z)^{i-k} \cap MP^{\mbox{abs}}_{i}(\omega) ,$$

$$\qquad =(-1)^{n-1}\displaystyle\sum_{l=0}^{r-k}(-1)^{k+l}
\left( \begin{array}{c}l+k\\
k\\
\end{array}\right)
c_{1}(L|_{Z})^{l} \cap MP^{\mbox{abs}}_{l+k}(\omega). \qquad \qquad$$

Conversely, $$MP^{\mbox{abs}}_{k}(\omega)=(-1)^{n-1}\displaystyle\sum_{l=0}^{r-k}(-1)^{k+l}
\left( \begin{array}{c}l+k\\
k\\
\end{array}\right)
c_{1}(L|_{Z})^{l} \cap {\cal M}_{k+l}(Z).$$
\end{lemma}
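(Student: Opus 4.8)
The plan is to establish the three displayed equalities in sequence, following the template of \cite[Lemmas 6.1 and 6.2]{CMS} but feeding in the corrected abstract Theorem \ref{3} throughout. For the first equality I would start from the Parusi\'nski--Pragacz description (equation (\ref{P})) concentrated in degree $k$, namely ${\cal M}_k(Z)=\sum_\alpha\gamma(S_\alpha)\sum_{j\ge 0}(-1)^{j}c_1(L|_Z)^{j}\cap(\psi_\alpha)_*c^{SM}_{k+j}(\overline{S}_\alpha)$, and substitute into each Schwartz--MacPherson class the abstract polar expression of Theorem \ref{3} applied to $\beta=1_{\overline{S}_\alpha}$. This introduces a second factor $c_1(L|_Z)^{i-k-j}$ together with the abstract MacPherson cycles $MP^{\mbox{abs}}_i(1_{\overline{S}_\alpha})$. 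The decisive feature, absent in the projective Main Lemma \ref{l: main}, is that here the defining bundle coincides with the embedding bundle, $L=\phi^*{\cal L}_N$; consequently both Chern factors are powers of the \emph{same} class $c_1(L|_Z)$ and combine into $c_1(L|_Z)^{i-k}$, rather than remaining the distinct classes $c_1(L|_Z)$ and $c_1({\cal L}|_Z)$ of the projective case.

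I would then collapse the stratum weights. Composing the push-forwards $(\psi_\alpha)_*(i_{\overline{S}_\beta,\overline{S}_\alpha})_*=(\psi_\beta)_*$ and applying the projection formula, the double sum over strata $\alpha$ and sub-strata $\beta$ reorganizes so that the coefficient of $(\psi_\beta)_*[P^{\mbox{abs}}_i(\overline{S}_\beta)]$ involves $\sum_\alpha\gamma(S_\alpha)\eta(S_\beta,1_{\overline{S}_\alpha})$. By linearity of the normal Morse index and \cite[Lemma 4.1]{Par-Pr} this equals $\eta\bigl(S_\beta,\sum_\alpha\gamma(S_\alpha)1_{\overline{S}_\alpha}\bigr)=(-1)^{n-1}\eta(S_\beta,\omega)$, so reassembling the $\beta$-sum yields $(-1)^{n-1}MP^{\mbox{abs}}_i(\omega)$; collecting the signs $(-1)^{j}(-1)^{i}(-1)^{n-1}$ produces the first displayed formula. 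The second displayed line is the purely combinatorial reindexing $l=i-k$: summing over $j$, the coefficient of $c_1(L|_Z)^{l}\cap MP^{\mbox{abs}}_{l+k}(\omega)$ is $\sum_{j=0}^{l}(-1)^{j}\binom{l+k+1}{j+k+1}$, which equals $\binom{l+k}{k}$ by Pascal's rule exactly as in \cite{CMS}.

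For the converse I would transcribe the inversion of \cite[Lemma 6.2]{CMS}. Both directions have the common shape $A_k=(-1)^{n-1}\sum_{l}(-1)^{k+l}\binom{l+k}{k}c_1(L|_Z)^{l}\cap B_{k+l}$, so substituting the just-proved expression for ${\cal M}_{k+l}(Z)$ into the candidate right-hand side gives a double sum whose global prefactor $\bigl((-1)^{n-1}\bigr)^{2}=1$ disappears, reducing matters to the computation already carried out in \cite{CMS}. There, after the substitution $n=d-j$, an application of the associativity identity of \cite{CMS}, and the reindexings $t=u-d+k$ and $v=s-u+d-k$, the inner bracket becomes $\sum_{v}(-1)^{v}\binom{s}{v}\binom{v}{d-k}$, which by \cite[Lemma (2.6)]{Verma} equals $(-1)^{d-k}\delta_{s,d-k}$; this isolates the single surviving term $MP^{\mbox{abs}}_k(\omega)$ and yields the converse.

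The step I expect to be delicate is not the combinatorics, which is mechanical, but ensuring that the \emph{abstract} Theorem \ref{3}, with $c_1(L|_Z)$ and the abstract polar classes $[P^{\mbox{abs}}_i(\overline{S}_\alpha)]$, is the version invoked at every stage. This is precisely where \cite{CMS} erred, applying Piene's formula to a closed immersion rather than to a subvariety of a projective space. The remedy is that each $\phi|_{\overline{S}_\alpha}\colon\overline{S}_\alpha\to\phi(\overline{S}_\alpha)$ is an isomorphism, so the Mather, polar and MacPherson data all transport through $((\phi|_{\overline{S}_\alpha})^{-1})_*$; this is what makes $c_1(L|_Z)$ the correct Chern class and allows the projective identities to descend consistently to $M$.
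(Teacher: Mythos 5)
Your proposal is correct and follows essentially the same route as the paper, which in the erratum simply defers to the arguments of \cite[Lemmas 6.1, 6.2]{CMS} and the inversion of \cite[Lemma 6.3]{CMS}: Parusi\'nski--Pragacz concentrated in degree $k$, substitution of the (abstract) polar expression for the Schwartz--MacPherson classes, collapse of the stratum weights to $(-1)^{n-1}\eta(S_\beta,\omega)$ via \cite[Lemma 4.1]{Par-Pr}, Pascal's rule for the reindexed coefficient, and Verma's identity for the converse. Your observation that the two Chern factors merge into powers of the single class $c_1(L|_Z)$ because $L=\phi^*{\cal L}_N$, and that all polar data transport through the isomorphism $\phi|_{\overline{S}_\alpha}$, is exactly the point that distinguishes the general case from the projective Main Lemma.
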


\vspace{0,3cm}
\begin{remark} \label{Rem} If $Z$ is a hypersurface of  degree $d$ in $\mathbb{P}^{N}$, then the projective L\^e cycles $\Lambda_k(Z)$, as defined in equation (\ref{Lecycle_definition}), are related to the abstract MacPherson cycles by
$$\Lambda_k(Z)=(-1)^{n-1}\displaystyle\sum_{j=0}^{r-k}(-1)^{j}
\left( \begin{array}{c}k+1+j\\
k+1 \\
\end{array}\right)
(d-1)^{j} c_{1}({\cal L}_{n})^{j} \cap MP^{\mbox{abs}}_{k+j}(\omega),$$ as classes in the Chow group of $Z$. This motivates the following definition:
\end{remark}

\vspace{0,3cm}
\begin{definition}\label{globalLe} The {\it global L\^e classes}  of $Z$  are defined as:
$$\Lambda_k(Z)=(-1)^{n-1}\displaystyle\sum_{j=0}^{r-k}(-1)^{j}
\left( \begin{array}{c}k+1+j\\
k+1 \\
\end{array}\right)
\left(\frac{d-1}{d}\right)^{j} c_{1}(L|_Z)^{j} \cap MP^{\mbox{abs}}_{k+j}(\omega),$$ where $d=\sqrt[n]{\deg_{L}[Z]}$ with $\deg_{L}[Z]=\displaystyle\int c_1(L|_Z)^{n-1} \cap [Z]$.\end{definition}

Observe also  that in the projective setting we have  $\deg_{L}[Z]=d^{n}$ where $d$ is the usual degree of $Z.$
Notice that this definition describes the global L\^e classes in the homology group $H_{*}(Z,\R)$ with real coefficients.
It is clear, from Remark \ref{Rem},  that this definition extends the notion of the local L\^e cycles.

\medskip

By simple combinatoric manipulations we get:
$$ MP^{\mbox{abs}}_{k}(\omega)=(-1)^{n-1}\displaystyle\sum_{j=0}^{r-k}
\left( \begin{array}{c}k+1+j\\
k+1 \\
\end{array}\right)
\left(\frac{d-1}{d}\right)^{j} c_{1}(L|_Z)^{j} \cap \Lambda_{k+j}(Z).$$
Then by Lemma \ref{l: Milnor classes}  and  \cite[Lemma 6.4]{CMS}  we get:
$${\cal M}_{k}(Z)= \displaystyle\sum_{l=0}^{r-k}\;\;\;\displaystyle\sum_{j=0}^{r-l-k}(-1)^{l+k}\left( \begin{array}{c}l+k\\
k \\
\end{array}\right)
\left( \begin{array}{c}l+k+1+j\\
l+k+1 \\
\end{array}\right) \left(\frac{d-1}{d}\right)^{j} c_{1}(L|_Z)^{l+j} \cap \Lambda_{l+k+j}(Z).$$

$$ \qquad \quad= \,
 \displaystyle\sum_{s=0}^{r-k}(-1)^{k}\left(\frac{d-1}{d}\right)^{s}\left[\displaystyle\sum_{t=0}^{s}(-1)^{t}\left( \begin{array}{c}t+k\\
k \\
\end{array}\right)
\left( \begin{array}{c}s+k+1\\
t+k+1 \\
\end{array}\right) \left(\frac{d}{d-1}\right)^{t}\right] c_{1}(L|_Z)^{s} \cap \Lambda_{k+s}(Z).\\
$$
Notice that by simple combinatoric manipulations we get:
$$\displaystyle\sum_{t=0}^{s}(-1)^{t}\left( \begin{array}{c}t+k\\
k \\
\end{array}\right)
\left( \begin{array}{c}s+k+1\\
t+k+1 \\
\end{array}\right) \left(\frac{d}{d-1}\right)^{t}=\displaystyle\sum_{l=0}^{s}(-1)^{l}\left( \begin{array}{c}l+k-1\\
l \\
\end{array}\right)
 \left(\frac{(d-1)^{s+1-l}+(-1)^{s-l}}{d(d-1)^{s}}\right) \,,$$
and so we arrive to
\begin{equation} \label{Milnorgeneral}{\cal M}_{k}(Z)= (-1)^{k}\displaystyle\sum_{s=0}^{r-k} C_{k,s}(d)\; c_{1}(L)^{s} \cap \Lambda_{k+s}(Z).\end{equation}

Conversely, by Definition \ref{globalLe}, Lemma \ref{l: Milnor classes} and \cite[Lemma 6.4]{CMS}, we have that
$$\Lambda_{k}(Z)= (-1)^{k}\displaystyle\sum_{s=0}^{r-k} B_{k,s}(d)\; c_{1}(L|_Z)^{s} \cap {\cal M}_{k+s}(Z)\,,$$
where $B_{k,s}(d)=\displaystyle\sum_{l=0}^{s}(-1)^{s-l}\left( \begin{array}{c}k+s\\
k+l \\
\end{array}\right)
\left(\begin{array}{c}k+1+l\\
k+1 \\
\end{array}\right)
 \left(\displaystyle\frac{d-1}{d}\right)^{l}$ is a rational function in $d$.

\medskip
Recall that the Aluffi class $\alpha_{Z_{sing}}$ of the singular set $Z_{sing}$ of $Z$ can be described as in \cite[equation (21)]{CMS} by
$\alpha_{Z_{sing}}=c(L_{|_{Z}}) \cap {\cal M}(Z)$. Hence by equation (\ref{Milnorgeneral}) we have that
$$(\alpha_{Z_{sing}})_{k}=\sum_{l\geq
0}\sum_{s= 0}^{r-l-k}(-1)^{k+l}C_{k+l,s}(d)\; c_{1}(L)^{l+s} \cap \Lambda_{k+l+s}(Z).$$


 \enddocument

\end